\documentclass[11pt]{article}
\usepackage{amsmath,amssymb,amsthm}
\usepackage{amscd}
\setlength{\topmargin}{-0.5cm}
\setlength{\textheight}{22cm}
\setlength{\evensidemargin}{0.5cm}
\setlength{\oddsidemargin}{0.5cm}
\setlength{\textwidth}{15cm}
\newtheorem{theorem}{Theorem}[section]
\newtheorem{lemma}[theorem]{Lemma}
\newtheorem{proposition}[theorem]{Proposition}
\newtheorem{corollary}[theorem]{Corollary}
\theoremstyle{plain}

\theoremstyle{definition}
\newtheorem{definition}[theorem]{Definition}

\numberwithin{equation}{section}

\renewcommand{\labelenumi}{\textup{(\theenumi)}}

\newcommand{\Hom}{\operatorname{Hom}}
\newcommand{\Homeo}{\operatorname{Homeo}}
\newcommand{\id}{\operatorname{id}}
\newcommand{\Ker}{\operatorname{Ker}}

\newcommand{\Ad}{\operatorname{Ad}}

\title{
On extensions of subshifts by finite groups
}
\author{Kengo Matsumoto \\
Department of Mathematics \\
Joetsu University of Education \\
Joetsu, 943-8512, Japan
}

\begin{document}
\maketitle

\def\det{{{\operatorname{det}}}}

\begin{abstract}
$\lambda$-graph systems are labeled Bratteli diagram with shift operations.
They present subshifts.
Their matrix presentations are called symbolic matrix systems. 
We define skew products of $\lambda$-graph systems
and study extensions of subshifts by finite groups.
We prove that two canonical symbolic matrix systems are 
$G$-strong shift equivalent if and only if 
their presented subshifts are  $G$-conjugate.
$G$-equivalent classes of subshifts are classified 
by the cohomology classes of their associated  skewing functions.
  \end{abstract}

{\it Mathematics Subject Classification}:
 Primary 37B10; Secondary 37C15, 37A55.
 
{\it Keywords and phrases}:
subshifts, $\lambda$-graph systems, symbolic matrix systems,
finite groups,  topological conjugacy, strong shift equivalence,
skewing functions.

\def\Re{{\operatorname{Re}}}
\def\det{{{\operatorname{det}}}}
\newcommand{\K}{\mathbb{K}}

\newcommand{\N}{\mathbb{N}}
\newcommand{\C}{\mathbb{C}}
\newcommand{\R}{\mathbb{R}}
\newcommand{\T}{\mathbb{T}}
\newcommand{\Z}{\mathbb{Z}}
\newcommand{\Zp}{{\mathbb{Z}}_+}
\newcommand{\f}{\tilde{f}}

\def\OA{{{\mathcal{O}}_A}}
\def\OB{{{\mathcal{O}}_B}}
\def\OZ{{{\mathcal{O}}_Z}}
\def\OL{{{\mathcal{O}}_{\frak L}}}
\def\OLG{{\mathcal{O}}_{{\frak L}^{G,\ell}}}
\def\FLG{{\mathcal{F}}_{{\frak L}^{G,\ell}}}
\def\DLG{{\mathcal{D}}_{{\frak L}^{G,\ell}}}
\def\SOA{{{\mathcal{O}}_A}\otimes{\mathcal{K}}}
\def\SOB{{{\mathcal{O}}_B}\otimes{\mathcal{K}}}
\def\SOZ{{{\mathcal{O}}_Z}\otimes{\mathcal{K}}}
\def\SOTA{{{\mathcal{O}}_{\tilde{A}}\otimes{\mathcal{K}}}}
\def\DA{{{\mathcal{D}}_A}}
\def\DB{{{\mathcal{D}}_B}}
\def\DZ{{{\mathcal{D}}_Z}}
\def\DTA{{{\mathcal{D}}_{\tilde{A}}}}
\def\SDA{{{\mathcal{D}}_A}\otimes{\mathcal{C}}}
\def\SDB{{{\mathcal{D}}_B}\otimes{\mathcal{C}}}
\def\SDZ{{{\mathcal{D}}_Z}\otimes{\mathcal{C}}}
\def\SDTA{{{\mathcal{D}}_{\tilde{A}}\otimes{\mathcal{C}}}}
\def\Max{{{\operatorname{Max}}}}
\def\Per{{{\operatorname{Per}}}}
\def\PerB{{{\operatorname{PerB}}}}
\def\Homeo{{{\operatorname{Homeo}}}}
\def\HA{{{\frak H}_A}}
\def\HB{{{\frak H}_B}}
\def\HSA{{H_{\sigma_A}(X_A)}}
\def\Out{{{\operatorname{Out}}}}
\def\Aut{{{\operatorname{Aut}}}}
\def\Ad{{{\operatorname{Ad}}}}
\def\Inn{{{\operatorname{Inn}}}}
\def\det{{{\operatorname{det}}}}
\def\exp{{{\operatorname{exp}}}}
\def\cobdy{{{\operatorname{cobdy}}}}
\def\Ker{{{\operatorname{Ker}}}}
\def\ind{{{\operatorname{ind}}}}
\def\id{{{\operatorname{id}}}}
\def\supp{{{\operatorname{supp}}}}
\def\co{{{\operatorname{co}}}}
\def\U{{{\mathcal{U}}}}
\def\LCHD2{{{{\frak L}^{Ch(D_2)}}}}
\def\LCHLA{{{{\frak L}^{Ch(\Sigma_2)}}}}


\def\Zp{{ {\mathbb{Z}}_+ }}
\def\OL{{{\mathcal{O}}_{{\frak L}}}}
\def\M{{{\mathcal{M}}}}
\def\H{{{\mathcal{H}}}}
\def\K{{{\mathcal{K}}}}
\def\P{{{\mathcal{P}}}}
\def\Q{{{\mathcal{Q}}}}
\def\A{{{\mathcal{A}}}}
\def\B{{{\mathcal{B}}}}
\def\R{{{\mathcal{R}}}}
\def\S{{{\mathcal{S}}}}
\def\G{{{\mathcal{G}}}}
\def\sms{{{symbolic  matrix system }}}
\def\smss{{{symbolic  matrix systems }}}
\def\nnms{{{nonnegative matrix system }}}
\def\nnmss{{{nonnegative  matrix systems }}}
\def\Ext{{{\operatorname{Ext}}}}
\def\Hom{{{\operatorname{Hom}}}}
\def\Ker{{{\operatorname{Ker}}}}



\section{Introduction}
Let $\Sigma$ be a finite set, called an alphabet.
Each element of $\Sigma$ is called a symbol or a label.
Let $\Sigma^{\mathbb{Z}}$ 
be the compact Hausdorff space defined by 
the infinite product space
$\prod_{i=-{\infty}}^{\infty}\Sigma_{i}$ 
where $\Sigma_{i} = \Sigma$,
 endowed with the product topology.
 The homeomorphism
$\sigma$ on $\Sigma^{\Z}$ 
given by 
$\sigma((x_i)_{i\in\Z}) = (x_{i+1})_{i\in\Z}$ 
is called the (full) shift.
 Let $\Lambda$ 
 be a shift invariant closed subset of 
$
\Sigma^{\Z}
$ i.e. $\sigma(\Lambda) = \Lambda$.
 The topological dynamical system 
 $(\Lambda, \sigma\vert_{\Lambda})$ is called a subshift.
 We denote $\sigma\vert_{\Lambda}$ by $\sigma$ and
  write the subshift as $\Lambda$ for short.
A subshift is often
called a symbolic dynamical system.
For an introduction to the theory of symbolic dynamical systems, see
\cite{Ki} and \cite{LM}.
Throughout the paper,
$\Zp$ and $\N$ denote 
the set of all nonnegative integers and
the set of all positive integers
respectively.

Let $G$ be a finite group.
Let $A=[A(i,j)]_{i,j=1}^N$ be an  $N\times N$ 
matrix with entries in nonnegative integers
which is called
a nonnegative matrix.
The matrix defines a finite directed graph $\G_A$
with $N$ vertices such that
the number of the edges from $v_i$ to $v_j$ is $A(i,j)$ for $i,j=1,\dots,N$.
Let $E_{A}$ be the edge set of the graph $\G_{A}$.
The shift space
$\Lambda_A$ for the matrix $A$ is defined as the set of biinfinite sequences 
of concatenating edges in $E_A$.
It is a compact subset of $E_A^{\Z}$ with shift homeomorphism
$\sigma$ written $\sigma_A$.
The subshift  
 $(\Lambda_A, \sigma_A)$ 
is called the shift of finite type 
defined by the nonnegative matrix $A$.
It is also called the SFT obtained from the directed graph 
$\G_A$.
Actions of finite groups on SFTs  have been studied by many authors
related to extensions of SFTs
(cf. \cite{AKM}, \cite{BoyleSullivan}, \cite{BoyleSchmieding}, 
\cite{Field}, \cite{FieldNicol}, \cite{Parry}, etc.).
W. Parry showed how to define extensions of SFTs by finite abelian groups
and Theorem 1.1 below.
Suppose that a map
$\ell:E_{A}\rightarrow G$
is given.
Through the map $\ell$,
the matrix $A$ defines  an 
$N\times N$ matrix $A^\ell$ over the semigroup ring $\Zp G$
over $G$.
The function
$\tau_\ell: \Lambda_A \rightarrow G$
defined by
$\tau_\ell((x_n)_{n \in \Z}) = \ell(x_0)\in G$
for $(x_n)_{n \in \Z} \in \Lambda_A$
yields the skew product 
$(G\ltimes \Lambda_A, \tau_\ell\ltimes\sigma_A)$
written $\Lambda_A^{G,\tau_\ell}$, 
which  is an SFT with a continuous $G$-action 
commuting the shift.
The function $\tau_\ell$ is called a skewing function.
Conversely,
any  SFT having a continuous $G$-action commuting with the shift
is constructed by this way.
An SFT with a continuous $G$-action with commuting the shift 
is called a $G$-SFT.
The following theorem was presented 
in Boyle--Sullivan's paper \cite{BoyleSullivan}
as a Parry's result.
\begin{theorem}[{\cite[Proposition 2.7.1]{BoyleSullivan}}] \label{thm:BS}
Let $A$ and $B$ be nonnegative matrices.
Suppose that maps
$\ell_A:E_A\rightarrow G$ and
$\ell_B:E_B\rightarrow G$ are given.
Let 
$\tau_{\ell_A}:\Lambda_A\rightarrow G$
and
$\tau_{\ell_B}:\Lambda_B\rightarrow G$
be the associated skewing functions.
Then the following are equivalent:
\begin{enumerate}
\renewcommand{\labelenumi}{(\arabic{enumi})}
\item $A^{\ell_A}$ and $B^{\ell_B}$ 
are strong shift equivalent over $\Z_+ G$.
\item There is a topological conjugacy 
$\varPhi: \Lambda_A \rightarrow \Lambda_B$
such that $\tau_{\ell_A} $ is cohomologous to $\tau_{\ell_B}\circ \varPhi$ in 
$C(\Lambda_A,G)$. 
\item There is a topological conjugacy between $G$-SFTs 
$\Lambda_A^{G,\tau_{\ell A}}$ and $\Lambda_B^{G,\tau_{\ell B}}$
 commuting with the $G$-actions.
\end{enumerate}
\end{theorem}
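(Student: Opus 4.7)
The plan is to prove the equivalences as the cycle $(1) \Rightarrow (3) \Rightarrow (2) \Rightarrow (1)$, using as the main tool a $G$-equivariant refinement of Williams' theorem together with a direct analysis of how $G$-equivariant maps between skew products must look.

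For $(1) \Leftrightarrow (3)$, I would first identify the $N\times N$ matrix $A^{\ell_A}$ over $\Z_+G$ with a $|G|N\times|G|N$ nonnegative integer matrix $\widetilde{A}$, obtained by replacing an entry $\sum_g n_g g \in \Z_+G$ with the corresponding block of left-regular permutation matrices. The SFT $\Lambda_{\widetilde{A}}$ is canonically conjugate to the skew product $\Lambda_A^{G,\tau_{\ell_A}}$, with the natural $G$-action on the block rows corresponding to the $G$-action on the skew product. An elementary equivalence $A^{\ell_A} = RS$, $C = SR$ over $\Z_+ G$ then expands to a block factorization $\widetilde{A} = \widetilde{R}\widetilde{S}$, $\widetilde{C} = \widetilde{S}\widetilde{R}$ whose block structure is preserved under $G$. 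Iterating a strong shift equivalence, Williams' theorem applied in the $G$-equivariant category yields a topological conjugacy commuting with the $G$-actions; conversely, an equivariant conjugacy can be decomposed into $G$-invariant elementary state splittings and amalgamations, which descend to elementary equivalences over $\Z_+G$ after taking $G$-orbits.

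For $(3) \Rightarrow (2)$, I would use that any continuous $G$-equivariant map $\widetilde{\varPhi}: G\times\Lambda_A \to G\times\Lambda_B$ automatically has the form $\widetilde{\varPhi}(g,x) = (g\,p(x),\varPhi(x))$ for some continuous $p:\Lambda_A\to G$ and $\varPhi:\Lambda_A\to\Lambda_B$. Requiring commutation with $\tau_{\ell_\bullet}\ltimes\sigma_\bullet$ on both sides forces $\varPhi\circ\sigma_A = \sigma_B\circ\varPhi$ together with the cocycle identity
\[
\tau_{\ell_A}(x) \; =\; p(x)\,\tau_{\ell_B}(\varPhi(x))\,p(\sigma_A(x))^{-1},
\]
which is exactly the statement that $\tau_{\ell_A}$ is cohomologous to $\tau_{\ell_B}\circ\varPhi$ in $C(\Lambda_A,G)$. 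Reversing the construction, a conjugacy $\varPhi$ together with a continuous cobounding $p$ defines $\widetilde{\varPhi}(g,x):=(g\,p(x),\varPhi(x))$, which one checks is a $G$-equivariant topological conjugacy of the skew products, closing $(2) \Rightarrow (3)$.

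The main obstacle I anticipate is the equivariant Williams step in $(3) \Rightarrow (1)$. Williams' classical argument decomposes a conjugacy of SFTs into a sequence of elementary in- and out-splits determined by some refinement of a sliding-block code; one must verify that, in the presence of a commuting free $G$-action on the vertex and edge sets, these refinements can be chosen $G$-invariantly, so that each elementary equivalence of the expanded integer matrices is of block form and hence descends to an elementary equivalence over $\Z_+G$. Once this equivariant splitting lemma is in place, the remaining verifications reduce to routine algebraic bookkeeping matching the semigroup-ring factorizations with their expanded block counterparts.
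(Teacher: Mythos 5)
First, a point of orientation: the paper does not prove Theorem~\ref{thm:BS} at all --- it is quoted from Boyle--Sullivan as Parry's result and used only as motivation. So there is no in-paper proof to match your argument against; the closest internal material is Proposition~\ref{prop:G-actioncoho} (which is exactly your $(2)\Leftrightarrow(3)$ argument: writing an equivariant map as $(g,x)\mapsto(g\,p(x),\varPhi(x))$ and extracting the cocycle identity from commutation with the skewed shifts), and, for the hard direction, the machinery the author builds for the generalization to $\lambda$-graph systems, namely Lemma~\ref{lem:Gst1step}, Lemma~\ref{lem:tauGstone} and Propositions~\ref{prop:Gskew} and~\ref{eq:propmain}. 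Your $(2)\Leftrightarrow(3)$ and your $(1)\Rightarrow(3)$ (expanding $A^{\ell_A}$ to the $|G|N\times|G|N$ integer matrix and pushing an elementary equivalence over $\Zp G$ through the block structure) are both correct and standard.

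The genuine gap is in $(3)\Rightarrow(1)$ (equivalently $(2)\Rightarrow(1)$), and you have in effect named it yourself without closing it: the ``equivariant Williams'' step is precisely the content of the theorem, not a routine verification. Two things are missing concretely. First, Williams' decomposition of a conjugacy into elementary splittings and amalgamations builds the intermediate graphs from partitions determined by the block codes; you must actually exhibit these partitions as $G$-invariant ones on the expanded graph and check that the $G$-action on every intermediate graph stays free, so that the quotient is again a graph labeled over $G$ --- asserting that ``these refinements can be chosen $G$-invariantly'' is the theorem's core and cannot be deferred to bookkeeping. Second, your descent ``by taking $G$-orbits'' recovers a matrix over $\Zp G$ only after a choice of orbit representatives on each vertex set, and different choices change the quotient matrix by a diagonal conjugation $D\,A^{\ell}\,D^{-1}$ with $D$ a diagonal matrix of group elements; this is exactly where the coboundary $\gamma$ of condition $(2)$ enters the algebra, and your sketch never connects the cohomology class of the skewing function to an elementary equivalence over $\Zp G$. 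The paper's route for its generalization supplies precisely this missing ingredient as Lemma~\ref{lem:tauGstone}: a one-block coboundary $b:\Sigma\rightarrow G$ induces a (properly $G$-)strong shift equivalence in one step via the diagonal matrices $D_l$, after which the conjugacy itself is handled by the decomposition into splittings with the labeling transported through each step. If you want to complete your proposal along your own lines, you need an explicit statement and proof of the equivariant splitting lemma plus the diagonal-conjugation lemma; alternatively, adopting the paper's cocycle-side strategy (decompose $\varPhi$ into elementary conjugacies, transport $\ell$ through each, and absorb the residual coboundary by diagonal similarity over $G$) avoids the equivariant refinement of Williams' theorem altogether.
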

In this paper, 
we will generalize the above results on SFTs to general subshifts.
 The author  has  introduced 
notions of 
$\lambda$-graph system and symbolic matrix system
as presentations of subshifts (\cite{DocMath1999}). 
They are generalized notions of 
$\lambda$-graph ($=$ labeled graph) and symbolic matrix.
A $\lambda$-graph system 
$ {\frak L} = (V,E,\lambda,\iota)$
consists of a vertex set 
$V = V_0 \cup V_1\cup V_2\cup\cdots$, an edge set 
$E = E_{0,1}\cup E_{1,2}\cup E_{2,3}\cup\cdots$, 
a labeling map
$\lambda: E \rightarrow \Sigma$
and a surjective map
$\iota(=\iota_{l,l+1}): V_{l+1} \rightarrow V_l$ 
for each
$l=0,1,\dots$
with a certain compatible condition.
A symbolic matrix system 
 $(\M,I)$
over $\Sigma$ consists of
two sequences of rectangular matrices
$(\M_{l,l+1}, I_{l,l+1}), l=0,1,\dots$.
The matrices $\M_{l,l+1}$ 
 have their entries in formal sums of $\Sigma$ and
the matrices $I_{l,l+1}$ have their entries in $\{0,1\}$.
They satisfy the commutation relations:
$
I_{l,l+1} \M_{l+1,l+2} = \M_{l,l+1}I_{l+1,l+2}
$ 
for
$
l =0,1,\dots.
$
It is required that
each row of $I_{l,l+1}$ 
has at least one $1$ and
each column of $I_{l,l+1}$ 
has exactly one $1$.
A $\lambda$-graph system naturally arises  from a symbolic matrix system $(\M,I)$.
The labeled edges from a vertex $v_i^l \in V_l$ 
to a vertex  $v_j^{l+1} \in V_{l+1}$ are given by the 
$(i,j)$-component $\M_{l,l+1}(i,j)$
of  $\M_{l,l+1}$.
The map $\iota( = \iota_{l,l+1} )$ is defined by 
$\iota_{l,l+1}(v_j^{l+1}) = v_i^l$
precisely if  
$I_{l,l+1}(i,j) =1.$
The $\lambda$-graph systems and the symbolic matrix systems 
 are the same objects and give rise to subshifts 
 by gathering label sequences
 appearing in the labeled Bratteli diagrams of the $\lambda$-graph systems.
Let us denote by $\Lambda_{\frak L}$
the subshift presented by the $\lambda$-graph system ${\frak L}$.
Conversely we have a canonical method 
to construct a $\lambda$-graph system and a symbolic matrix system
from an arbitrary subshift \cite{DocMath1999}.
They are called the canonical $\lambda$-graph system
and the canonical symbolic matrix system for subshift $\Lambda$ 
and written as ${\frak L}^\Lambda$ and $(\M^\Lambda, I^\Lambda)$
respectively.

Let $G$ be a finite group.
We call a subshift $(\Lambda,\sigma)$ a $G$-{\it subshift\/}
if there exists an action of $G$ on $\Lambda$ 
which commutes with the shift $\sigma$.
$G$-subshifts $(\Lambda, \sigma)$ and $(\Lambda', \sigma')$
are said to be $G$-{\it conjugate\/}
if there exists a topological conjugacy between them
commuting with their $G$-actions.
For a given function $\tau:\Lambda\rightarrow G$, 
we may consider a subshift $\Lambda^{G,\tau}$
as an extension of  $\Lambda$ by $\tau$,
and know that the subshift $\Lambda^{G,\tau}$ is  a $G$-subshift.
The extension $\Lambda^{G,\tau}$ 
is also called a skew product 
of $\Lambda$ by $\tau$ and written
$(G\ltimes\Lambda, \tau\ltimes\sigma)$.
Conversely, we may show that  
any $G$-subshift $\widetilde{\Lambda}$
is of the form $\Lambda^{G,\tau}$
for some continuous function $\tau:\Lambda\rightarrow G$.
Let 
$\tau: \Lambda \rightarrow G$
and
$\tau': \Lambda' \rightarrow G$
be 
continuous functions.
Then it is easy to see that 
there exist a topological conjugacy
$
\varPhi: \Lambda \rightarrow \Lambda'
$
such that $\tau$ is cohomologous to
$\tau'\circ \varPhi$
if and only if
there exists a topological conjugacy 
between $G$-subshifts
$\Lambda^{G,\tau}
$
and
${\Lambda'}^{G,\tau'}
$
commuting 
with their $G$-actions.

For a $\lambda$-graph system 
$ {\frak L} = (V,E,\lambda,\iota)$
and a finite group $G$,
suppose that a map
$\ell:\Sigma\rightarrow G$
is given. 
 We  may define 
 a $\lambda$-graph system 
${\frak L}^{G,\ell}$
as an extension of ${\frak L}$ by $\ell$,
and show that the
$\lambda$-graph system
 ${\frak L}^{G,\ell}$ has a $G$-action.
A ${\lambda}$-graph system
with $G$-action is called a $G$-${\lambda}$-graph system.
 We will know that a characterization of $G$-$\lambda$-graph system
 (Theorem \ref{thm:extlambda}).
 We show the following:
\begin{theorem}[{Theorem \ref{thm:G-conj}}]
For a $\lambda$-graph system 
$ {\frak L} = (V,E,\lambda,\iota)$
 and a map
$\ell: \Sigma\rightarrow G$,
the subshift
$\Lambda_{{\frak L}^{G,\ell}}$
presented by the $G$-$\lambda$-graph system
${\frak L}^{G,\ell}$
becomes a $G$-subshift
which is  $G$-conjugate to the 
skew product 
$G\ltimes \Lambda_{\frak L}$
defined by the function
$\tau_\ell((x_n)_{n \in \Z}) = \ell(x_0)$
for $(x_n)_{n\in \Z} \in \Lambda_{\frak L}$,
that is
\begin{equation*}
(\Lambda_{{\frak L}^{G,\ell}}, \sigma_{{\frak L}^{G,\ell}})
\cong
(G\ltimes \Lambda_{\frak L}, \tau_\ell\ltimes \sigma_{\frak L}).
\end{equation*}
\end{theorem}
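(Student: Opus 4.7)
The plan is to exhibit an explicit map $\Phi$ from $\Lambda_{{\frak L}^{G,\ell}}$ to $G\ltimes \Lambda_{\frak L}$ that is a sliding block code of window size one, and to verify it is a bijection intertwining both the shift and the $G$-action. The underlying idea is that a point of the skew product is determined by its underlying $\Lambda_{\frak L}$-trajectory together with a single ``base'' group coordinate, and, by construction, the same is true of points in $\Lambda_{{\frak L}^{G,\ell}}$.

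First I would unpack the construction of ${\frak L}^{G,\ell}$: its level-$l$ vertex set is $V_l\times G$, and each edge $e\in E_{l,l+1}$ with $\lambda(e)=a$, source $v$, target $v'$ yields, for every $g\in G$, an edge from $(v,g)$ to $(v',g\ell(a))$ carrying the label $(a,g)\in \Sigma\times G$. Hence a point of $\Lambda_{{\frak L}^{G,\ell}}$ is exactly a bi-infinite sequence $((a_n,g_n))_{n\in\Z}$ in $(\Sigma\times G)^{\Z}$ whose $\Sigma$-projection $(a_n)_{n\in\Z}$ lies in $\Lambda_{\frak L}$ and which satisfies the cocycle identity $g_{n+1}=g_n\ell(a_n)$ for all $n$. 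I then define
\[
\Phi:\Lambda_{{\frak L}^{G,\ell}} \longrightarrow G\ltimes \Lambda_{\frak L}, \qquad \Phi\bigl(((a_n,g_n))_{n\in\Z}\bigr)=\bigl(g_0,\,(a_n)_{n\in\Z}\bigr).
\]

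The verifications then split into four routine checks. (i) $\Phi$ is well-defined and continuous, since it reads off the symbol at coordinate $0$ and the underlying $\Sigma$-sequence, which lies in $\Lambda_{\frak L}$ by definition. (ii) $\Phi$ is bijective, with inverse obtained by reconstructing $g_n$ from $g_0$ and $(a_n)$ via $g_{n+1}=g_n\ell(a_n)$ for $n\geq 0$ and $g_n=g_{n+1}\ell(a_n)^{-1}$ for $n<0$. (iii) $\Phi$ intertwines the shifts: shifting $((a_n,g_n))$ gives $((a_{n+1},g_{n+1}))$, which maps to $(g_1,(a_{n+1}))=(g_0\ell(a_0),\sigma(a_n))=(g_0\tau_\ell((a_n)),\sigma(a_n))$, precisely the image of $(g_0,(a_n))$ under the skew-product shift $\tau_\ell\ltimes\sigma_{\frak L}$. (iv) $\Phi$ is $G$-equivariant, because the $G$-action on ${\frak L}^{G,\ell}$ multiplies the group coordinate of every vertex (and hence of every label) by a fixed element, corresponding exactly to left multiplication in the first coordinate of $G\ltimes\Lambda_{\frak L}$.

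The main obstacle, and really the only nontrivial point, is step (ii): one must check that every pair $(g,x)\in G\ltimes\Lambda_{\frak L}$ actually arises as $\Phi$ of some admissible bi-infinite sequence in $\Lambda_{{\frak L}^{G,\ell}}$, i.e.\ that one can lift any bi-infinite labeled path in ${\frak L}$ through an arbitrary choice of base group element to a bi-infinite labeled path in the extension. This amounts to showing that, from any vertex $(v,g)\in V_l\times G$, every labeled edge $e$ at $v$ in ${\frak L}$ has a unique lift to an edge of ${\frak L}^{G,\ell}$, and that lifts at successive levels are compatible under the maps $\iota_{l,l+1}\times\id_G$. The first part is built into the definition of the extension, while compatibility follows from the cocycle identity together with the surjective maps $\iota_{l,l+1}$ in ${\frak L}$; the characterization of $G$-$\lambda$-graph systems (Theorem \ref{thm:extlambda}) should package exactly this consistency and reduce the argument to a direct verification.
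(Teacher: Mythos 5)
Your proposal is correct and follows essentially the same route as the paper: the paper's proof also uses the map $\varphi_G((g_n,x_n)_{n\in\Z})=(g_0,(x_n)_{n\in\Z})$, recovers the remaining group coordinates from $g_0$ via the cocycle relation $g_{n+1}=g_n\ell(x_n)$ to get bijectivity, verifies the same shift-intertwining identity, and observes $G$-equivariance (the paper merely routes the argument through the one-sided subshift first). No gaps.
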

Let
$(\M,I)$
be a \sms over $\Sigma$.
Suppose that a map $\ell:\Sigma\rightarrow G$ is given.
Then $(\M,I)$ 
is naturally regarded as a \sms over $\Zp G$
through the map $\ell$,
denoted by $(\M^{\ell},I)$.
We may give a definition of 
properly $G$-strong shift equivalence 
between two \smss over  $\Zp G$.
We will prove the following theorem as a main result of the paper.  
\begin{theorem}[{Theorem \ref{thm:main}}]
Let $G$ be a finite group.
Let ${\frak L}$ and ${\frak L}'$
be  $\lambda$-graph systems over $\Sigma$
and $\Sigma'$, respectively.
Let $(\M,I)$ and $(\M',I')$ be their associated 
symbolic matrix systems, respectively.
Suppose that maps
$\ell:\Sigma\rightarrow G$
and
 $\ell':\Sigma'\rightarrow G$
are given.
Let $(\M^{\ell},I)$ 
and
$({\M'}^{\ell'},I')$ 
be their  \smss over $\Zp G$ 
through the maps $\ell$ and  $\ell'$
respectively.
Consider the following three conditions:
\begin{enumerate}
\renewcommand{\labelenumi}{(\arabic{enumi})}
\item 
$(\M^{\ell},I)$ and $({\M'}^{\ell'},I')$ 
are properly $G$-strong shift equivalent.     
\item 
There exists a topological conjugacy
$\varPhi:\Lambda_{\frak L}\rightarrow\Lambda_{{\frak L}'}$
such that $\tau_\ell$ is cohomologous to $\tau_{\ell'}\circ\varPhi$
in $C(\Lambda_{\frak L},G)$.
\item 
The $G$-subshifts $\Lambda_{{\frak L}^{G,\ell}}$
and
$\Lambda_{{{\frak L}'}^{G,\ell'}}$
are $G$-conjugate.
\end{enumerate}
Then we have
$$
(1) \Longrightarrow (2) \Longleftrightarrow (3).
$$
If in particular,
${\frak L}$ and ${\frak L}'$ are both the canonical $\lambda$-graph systems,
we have
$
(2) \Longrightarrow (1).
$
\end{theorem}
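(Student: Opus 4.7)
The plan is to verify the implications in the order $(2)\Leftrightarrow(3)$, $(1)\Rightarrow(3)$, and then, under the canonical hypothesis, $(2)\Rightarrow(1)$.

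\emph{Step 1: $(2)\Leftrightarrow(3)$.} This should fall out of Theorem \ref{thm:G-conj} combined with the elementary observation already recorded in the introduction: for continuous maps $\tau:\Lambda\to G$ and $\tau':\Lambda'\to G$, a topological conjugacy $\varPhi:\Lambda\to\Lambda'$ with $\tau$ cohomologous to $\tau'\circ\varPhi$ in $C(\Lambda,G)$ exists if and only if $\Lambda^{G,\tau}$ and ${\Lambda'}^{G,\tau'}$ are $G$-conjugate. Theorem \ref{thm:G-conj} identifies $\Lambda_{{\frak L}^{G,\ell}}$ with the skew product $G\ltimes\Lambda_{\frak L}$ defined by $\tau_\ell$, and similarly for the primed objects, so these two facts combine at once.

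\emph{Step 2: $(1)\Rightarrow(3)$.} The natural route is to show that a proper $G$-strong shift equivalence between $(\M^{\ell},I)$ and $({\M'}^{\ell'},I')$ lifts canonically to an ordinary strong shift equivalence between the symbolic matrix systems of ${\frak L}^{G,\ell}$ and ${{\frak L}'}^{G,\ell'}$, in which the elementary bridging matrices carry the $G$-indexing dictated by the $\Zp G$-coefficients. By the Williams-type theorem for symbolic matrix systems from the author's earlier work, that strong shift equivalence produces a topological conjugacy between $\Lambda_{{\frak L}^{G,\ell}}$ and $\Lambda_{{{\frak L}'}^{G,\ell'}}$, and the $G$-equivariance built into the bridging matrices forces this conjugacy to commute with the $G$-actions, giving (3). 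Step 1 then yields (2) as well.

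\emph{Step 3: $(2)\Rightarrow(1)$ in the canonical case.} This is where the canonical hypothesis is essential, since only then does a topological conjugacy $\varPhi$ of $\Lambda_{\frak L}$ and $\Lambda_{{\frak L}'}$ induce a strong shift equivalence between the canonical symbolic matrix systems $(\M^\Lambda,I^\Lambda)$ and $(\M^{\Lambda'},I^{\Lambda'})$. The task is to upgrade this to a proper $G$-strong shift equivalence using the coboundary $f:\Lambda_{\frak L}\to G$ witnessing $\tau_\ell \sim \tau_{\ell'}\circ\varPhi$. The strategy is to approximate $f$ by a function $f_k$ depending on finitely many coordinates, choose a Bratteli level at which $f_k$ is measurable with respect to the vertex partition of the canonical $\lambda$-graph system, and use $f_k$ to twist the $G$-labels on the bridging matrices appearing in the strong shift equivalence. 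Compatibility at each elementary step must be checked against the commutation relation $I\M' = \M I$, and the cocycle identity for $f$ is exactly what is required to make the twisted bridging matrices satisfy the proper $G$-axioms.

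\emph{Main obstacle.} The delicate point is Step 3: the globally defined coboundary $f$ must be turned into a sequence of level-wise $G$-labelings of the intermediate symbolic matrices in a way compatible with the canonical tower $V_0\leftarrow V_1\leftarrow V_2\leftarrow\cdots$. Aligning the block size on which $f$ stabilizes with the level at which each elementary equivalence operates, and verifying the proper condition uniformly in the level, is what makes the implication nontrivial and forces the canonical hypothesis.
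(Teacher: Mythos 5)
Your Step 1 coincides with the paper's argument (Proposition \ref{prop:G-actioncoho} plus Theorem \ref{thm:G-conj}). Your Step 2, however, takes a genuinely different route: you propose to lift the proper $G$-strong shift equivalence to an ordinary strong shift equivalence between the symbolic matrix systems of the extensions ${\frak L}^{G,\ell}$ and ${{\frak L}'}^{G,\ell'}$ and then argue that the induced bipartite conjugacy is $G$-equivariant, i.e.\ you aim at $(1)\Rightarrow(3)$. The paper instead proves $(1)\Rightarrow(2)$ directly: from a one-step equivalence with data $\varphi(x_0)=c_0d_0$ and compatible maps $\ell_C,\ell_D$, it reads off an explicit one-block coboundary $b(x_0)=\ell_C(c_0)$ witnessing that $\tau_\ell$ is cohomologous to $\tau_{\ell'}\circ\varPhi$ along the forward bipartite conjugacy (Lemma \ref{lem:Gst1step}), and composes these over the $N$ steps (Proposition \ref{prop:Gskew}). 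Your route is plausible but leaves unverified both the construction of the $G$-indexed bridging matrices and the equivariance of the resulting conjugacy; the paper's route avoids this entirely and is the more economical one.

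In Step 3 there is a genuine gap. You correctly identify the obstacle (converting the global coboundary $f$ into level-wise data), but your proposed resolution --- ``choose a Bratteli level at which $f_k$ is measurable with respect to the vertex partition'' --- is not available for free. The vertices of the canonical $\lambda$-graph system at level $l$ are the $l$-past equivalence classes of $\Lambda^{+}$, and an arbitrary continuous function, even one depending on finitely many coordinates, need not be constant on such classes: two points can be $l$-past equivalent while disagreeing in every negative coordinate. The paper closes exactly this gap with two lemmas showing that any continuous $\gamma$ satisfying the transfer equation $\tau_\ell(x)=\gamma(x)\tau_{\ell'}(x)\gamma(\sigma(x))^{-1}$ is forced, by iterating the cocycle identity forwards and backwards, to depend only on the $l$-past equivalence class of $x^{+}$ for all $l\ge L$; only then does $\gamma$ descend to maps $\gamma_l:V_l^{\Lambda}\to G$ and define the diagonal matrices $D_l$ used in Lemma \ref{lem:tauGstone}. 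Moreover, the paper's actual scheme is two-staged in a way your sketch does not capture: it first transports the labeling $\ell$ through the chain of merged splitting/amalgamation moves of \cite[Theorem 7.1]{ETDS2003}, each of which carries a canonical new labeling $\ell_{i+1}$ and a one-block coboundary and is shown separately to be a properly $G$-strong shift equivalence; this produces a labeling $\ell_N$ on $\Sigma'$ with $\tau_{\ell'}$ cohomologous to $\tau_{\ell_N}$ on $\Lambda'$ itself, and only the residual comparison of $\ell'$ with $\ell_N$ on the single canonical system is handled by the diagonal twist (after reducing, via higher block systems, to a one-block coboundary). Without the past-equivalence lemma and this reduction, your ``twisting of the bridging matrices'' cannot be carried out.
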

The equivalence between $(2) \Longleftrightarrow (3)$ is easy.
The other two implications 
$
(1) \Longrightarrow (2)
$
and
$
(2) \Longrightarrow (1)
$
for the canonical $\lambda$-graph systems 
are the main ingredients  of this paper which 
will be proved in Section 6.

We will finally present an example of an action of a finite group to a 
$\lambda$-graph system
which presents a nonsofic subshift called Dyck shift $D_2$,
and study $G$-conjugacy classes of extensions of Markov--Dyck shifts
to give examples of extensions of non sofic subshifts which are not $G$-conjugate
(Corollary \ref{cor:MDG}, Proposition \ref{prop:7.7}). 

\section{Preliminaries}
\subsection{Subshifts}
Let $(\Lambda,\sigma)$ be a subshift over $\Sigma$.
   A finite sequence $\mu = (\mu_1,...,\mu_k) $ of elements $\mu_j \in \Sigma$
 is called a word. 
  We denote by $|\mu|$ the length $k$ of $\mu$. 
  A word $\mu = (\mu_1,...,\mu_k)$ is said to appear in 
$x=(x_i)_{i\in \Z} \in \Sigma^{\Z}$ 
if 
$x_m = \mu_1,..., x_{m+k-1} = \mu_k$  
for some $m \in {\Bbb Z}$.
For a subshift $\Lambda$, 
we denote by 
$B_k(\Lambda)$ the set of all words of length $k$ 
appearing in some $x \in \Lambda$,
where $B_0(\Lambda)$ denotes the empty word.
We set
$B_*(\Lambda) = \cup_{k=0}^\infty B_k(\Lambda)$.
Let us denote by $\Lambda^+$ 
the shift space of the right one-sided subshift for $\Lambda$
which is defined by 
\begin{equation*}
\Lambda^+ = \{ (x_n)_{n \in \N} \mid (x_n)_{n \in \Z} \in \Lambda \}.
\end{equation*} 
For $x = (x_n)_{n \in \N} \in \Lambda^+$ and $l \in \Zp$,
the $l$-predecessor set $\Gamma_l^{-}(x)$
for $x$ is defined by
\begin{equation*}
\Gamma_l^{-}(x) = \{ (\mu_1,\dots,\mu_l) \in B_l(\Lambda) \mid
(\mu_1,\dots,\mu_l, x_1,x_2, \dots ) \in \Lambda^+ \}.
\end{equation*}

\subsection{$\lambda$-graph systems}
A $\lambda$-graph system 
is a graphical object presenting a subshift (\cite{DocMath1999}). 
It is a generalization of a finite labeled graph and has a close relationship
to a construction of a certain class of $C^*$-algebras (\cite{DocMath2002}).  
Let ${\frak L} =(V,E,\lambda,\iota)$ be 
a $\lambda$-graph system 
 over $\Sigma$ with vertex set
$
V = \cup_{l \in \Zp} V_{l}
$
and  edge set
$
E = \cup_{l \in \Zp} E_{l,l+1}
$
with a labeling map
$\lambda: E \rightarrow \Sigma$, 
and that is supplied with  surjective maps
$
\iota( = \iota_{l,l+1}):V_{l+1} \rightarrow V_l
$
for
$
l \in  \Zp.
$
Here the vertex sets $V_{l},l \in \Zp$
are finite disjoint sets.   
Also  
$E_{l,l+1},l \in \Zp$
are finite disjoint sets.
An edge $e$ in $E_{l,l+1}$ has its source vertex $s(e)$ in $V_{l}$ 
and its terminal  vertex $t(e)$ 
in
$V_{l+1}$
respectively.
Every vertex in $V$ has a successor and  every 
vertex in $V_l$ for $l\in {\N}$ 
has a predecessor. 
It is then required that there exists an edge in $E_{l,l+1}$
with label $\alpha$ and its terminal is  $v \in V_{l+1}$
 if and only if 
 there exists an edge in $E_{l-1,l}$
with label $\alpha$ and its terminal is $\iota(v) \in V_{l}.$
For 
$u \in V_{l-1}$ and
$v \in V_{l+1},$
put
\begin{align}
E^{\iota}(u, v)
& = \{e \in E_{l,l+1} \ | \ t(e) = v, \iota(s(e)) = u \}, \label{eq:2.1}\\
E_{\iota}(u, v)
& = \{e \in E_{l-1,l} \ | \ s(e) = u, t(e) = \iota(v) \}. \label{eq:2.2}
\end{align}
Then we require a bijective correspondence preserving their labels between 
$
E^{\iota}(u, v)
$
and
$
E_{\iota}(u, v)
$
for each pair of vertices
$u, v$.
We call this property {\it  the local property\/} of $\lambda$-graph system. 
We call an edge in $E$ a labeled edge,
and a finite sequence of connecting labeled edges a labeled path or a $\lambda$-path.
If a labeled path $\gamma$ labeled $\nu$
starts at a vertex $v$ in $V_l$
and ends at a vertex $u$ in $V_{l+n}$,
we say that $\nu$ leaves $v$
and write 
$s(\gamma)=v, t(\gamma) = u, \lambda(\gamma) = \nu.$  
We henceforth assume that ${\frak L}$ is left-resolving, 
which means that 
$t(e)\ne t(f)$ whenever $\lambda(e) = \lambda(f)$, $e\ne f$
 for $e,f \in E$.
For a vertex
$v \in V_l$ denote by
$\Gamma_l^-(v)$ the predecessor set of $v$
which is defined by the set of  words  of length $l$
appearing as labeled paths from a vertex in $V_0$ to the vertex $v$.  
 ${\frak L}$ is said to be predecessor-separated if 
$\Gamma_l^-(v) \ne \Gamma_l^-(u)$
whenever $u, v\in V_l$  are distinct.
A subshift $\Lambda$ is said to be presented by 
a $\lambda$-graph system 
${\frak L}$
if the set of admissible words of $\Lambda$
coincides with the set of labeled paths appearing somewhere in ${\frak L}$.
$\lambda$-graph systems
${\frak L} =(V,E,\lambda,\iota)$ over $\Sigma$
and
${\frak L}' =(V',E',\lambda',\iota')$ over $\Sigma'$
are said to be isomorphic if there exist
bijections
$\varPhi_V:V \rightarrow V',$
$\varPhi_E:E \rightarrow E'$
and
$\phi:\Sigma \rightarrow \Sigma'$
satisfying
$\varPhi_V(V_l) = V'_l,$
$\varPhi_E(E_{l,l+1}) = E'_{l,l+1}$
and
$\lambda' \circ \varPhi_E = \phi\circ \lambda$
such that
they give rise to a labeled graph isomorphism
compatible to $\iota$ and $ \iota'$.
We note that any essential finite directed labeled graph 
${\cal G} = ({\cal V}, {\cal E}, \lambda)$ over $\Sigma$
with vertex set ${\cal V}$, 
edge set ${\cal E}$ and labeling map $\lambda:{\cal E}\longrightarrow \Sigma$
 gives rise to a $\lambda$-graph system
${\frak L}_{\cal G} =(V,E,\lambda,\iota)$
by setting
$V_l ={\cal V}, E_{l,l+1} ={\cal E}, \iota = \id$
for all $l \in \Zp$
(cf. \cite{DocMath2002}).

Two points $x, y \in \Lambda^+$ are said to be 
$l$-{\it past equivalent},
written as
$x \sim_l y$,
if $\Gamma_l^-(x) = \Gamma_l^-(y)$.
For a fixed 
$l \in \Zp$, 
let 
$F_i^l, i= 1, 2,\dots, m(l)$ 
be the set  of all $l$-past equivalence classes of $\Lambda^+$
so that $\Lambda^+$ is a disjoint union of 
 $F_i^l, i= 1, 2,\dots, m(l)$. 
 Then the canonical $\lambda$-graph system
$
{\frak L}^\Lambda =(V^\Lambda,E^\Lambda,\lambda^\Lambda,\iota^\Lambda )
$
for $\Lambda$ is defined as follows
(\cite{DocMath1999}). 
The vertex set $V_l^\Lambda$ 
at level $l$
consists of the sets 
 $F_i^l,i=1,\dots,m(l)$.
 We write an edge with label $\alpha$ 
    from the vertex 
 $F_i^l \in V_l^\Lambda$ 
 to the vertex
 $F_{j}^{l+1} \in V_{l+1}^\Lambda$ 
 if
$ \alpha x \in F_i^l$
 for some 
 $x \in F_j^{l+1}$.
We denote by $E_{l,l+1}^\Lambda$ 
the set of all edges from $V_l^\Lambda$ 
to $V_{l+1}^\Lambda$.
There exists a natural map $\iota^{\Lambda}_{l,l+1}$ 
from $V_{l+1}^\Lambda$ 
to
$V_l^\Lambda$ 
by mapping $F_j^{l+1}$ to $F_i^l $
 when 
  $F_i^l $ contains  $F_j^{l+1}$.
Set
$V^\Lambda = \cup_{l \in \Zp} V_l^\Lambda $
 and
$E^\Lambda = \cup_{l \in \Zp} E_{l,l+1}^\Lambda$.
The labeling of edges is denoted by
$\lambda^\Lambda:E^\Lambda \rightarrow \Sigma$.
 The canonical $\lambda$-graph system
$
{\frak L}^\Lambda 
$
is left-resolving and predecessor-separated, 
and presents $\Lambda$.

For a $\lambda$-graph system ${\frak L}$,
let $\Lambda_{\frak L} $
be the presented subshift by
${\frak L}$.
Then its canonical $\lambda$-graph system 
$
{\frak L}^{\Lambda_{\frak L}} 
$
does not necessarily coincide with the original
$\lambda$-graph system
${\frak L}$.
If in particular,
 $\Lambda$ is a sofic shift,
its canonical $\lambda$-graph system  is eventually realized  as the 
left Krieger cover graph for $\Lambda$.
\subsection{Symbolic matrix systems}
For an alphabet $\Sigma$, 
let us denote by ${\frak S}_{\Sigma}$
the set of formal sums of elements of $\Sigma$.
It contains $0$ as an empty word $\emptyset$.
A symbolic matrix system is a matrix presentation
of a $\lambda$-graph system.
It consists of
 a pair  $(\M_{l,l+1},I_{l,l+1}), l \in \Zp$
of sequences of rectangular matrices 
such that the following conditions for each $l \in \Zp$
are satisfied:
\begin{enumerate}
\item
$\M_{l,l+1} $ is an $m(l) \times m(l+1)$ rectangular matrix with entries in 
${\frak S}_{\Sigma}$.
\item
$I_{l,l+1} $ is an $m(l) \times m(l+1)$ rectangular matrix with entries in 
$\{0,1 \}$ 
satisfying the relation:
\begin{equation}
I_{l,l+1}\M_{l+1,l+2} = \M_{l,l+1} I_{l+1,l+2},
\qquad l \in \Zp.
\end{equation}
\end{enumerate}
We further assume that
both the matrices $\M_{l,l+1}$ and $I_{l,l+1}$ have no zero columns and no zero rows.
For $j$, 
there uniquely exists $i$ such that
$I_{l,l+1}(i,j) \ne 0$.
By the above conditions one sees 
$m(l) \le m(l+1).$
The pair $(\M,I)$ is called 
a {\it symbolic matrix system} over $\Sigma$.

Symbolic matrix systems 
$(\M,I)$ over  $\Sigma$
and
$(\M',I')$ over  $\Sigma'$
are said to be isomorphic 
if $m(l) = m^\prime(l)$ for $l \in \Zp$ 
and
there exists a specification $\phi$ 
from $\Sigma$ to $\Sigma'$ 
and an $m(l) \times m(l)$ permutation matrix
$P_l$ for each $l \in \Zp$ 
such that
$$
P_l \M_{l,l+1} \overset{\phi}{\simeq} \M'_{l,l+1}P_{l+1},
\qquad
P_l I_{l,l+1} = I'_{l,l+1}P_{l+1}
\qquad
\text{ for }
\quad l \in \Zp
$$
where specification $\phi:\Sigma\rightarrow \Sigma'$ 
means a bijective relabeling map,
and 
$\overset{\phi}{\simeq}$
 means the entrywise equalities through $\phi$.
Symbolic matrix systems 
$(\M,I)$ 
and
$(\M',I')$ 
are said to be shift isomorphic 
if there exist $k,k' \in \Zp$ such that 
\begin{equation*}
\M_{l+k,l+k+1}
=
\M'_{l+k',l+k'+1}, \qquad
I_{l+k,l+k+1}
=
I'_{l+k',l+k'+1} \quad
\text{ for all } l \in \Zp
\quad
(\cite{IJM2004}).
\end{equation*}
The notion of \sms is a generalized notion of symbolic matrix.
For an $n \times n$ symbolic matrix $\A$, we set
$\M_{l,l+1} = \A,  I_{l,l+1} = E_n
$
for
$l \in \Zp$
where 
$E_n$ denotes the identity matrix of size $n$.
Then 
 $(\M_{l,l+1},I_{l,l+1}), l \in \Zp$
 is a symbolic matrix system.
 There exists a natural bijective correspondence 
between 
the set of isomorphism classes of \smss 
and 
the set of isomorphism classes of $\lambda$-graph systems.
We say a \sms to be canonical for a subshift $\Lambda$ if
its corresponding $\lambda$-graph system is canonical.
It is denoted by
$(\M^\Lambda, I^\Lambda)$.

\subsection{Strong shift equivalence of symbolic matrix systems}
The notion of strong shift equivalence in nonnegative matrices 
is a fundamental notion in the classification theory of shifts of finite type.
It has  been introduced by R. F. Williams \cite{Wi} to classify 
shifts of finite type by topological conjugacy
 (for sofic shifts see \cite{Nasu}).
As a generalization of Williams's strong shift equivalence,
two kinds of  strong shift equivalences 
between symbolic matrix systems
have been introduced in \cite{DocMath1999}.
One is called the properly strong shift equivalence that 
exactly reflects a bipartite decomposition of the associated 
$\lambda$-graph systems.
The other one is called the strong shift equivalence
that is  weaker than the former strong shift equivalence.
They coincide at least among \smss whose $\lambda$-graph systems are
left-resolving and predecessor-separated,
and hence between canonical \smss for subshifts.
  The latter is easier defined and treated than the former
(see \cite{DocMath1999} for the detail).

Let us recall the two strong shift equivalences in symbolic matrix systems.
For alphabets $C,D$,
put
$C \cdot D = \{ cd \mid c \in C,d \in D \}.$
For 
$x = \sum_{j}c_j \in {\frak S}_C$
and
$ y = \sum_{k} d_k \in {\frak S}_D$,
 define
 $xy = \sum_{j,k} c_jd_k \in {\frak S}_{C\cdot D}$.
Let $(\M,I)$ and $(\M^{\prime},I')$ be \smss over 
$\Sigma$ and $\Sigma^{\prime}$ respectively,
where
$\M_{l,l+1},I_{l,l+1}$ are $m(l)\times m(l+1)$ matrices and
$\M'_{l,l+1},I'_{l,l+1}$ are $m'(l)\times m'(l+1)$ matrices.
Symbolic matrix systems 
 $(\M,I)$ and $(\M^{\prime},I')$
 are said to be {\it properly strong shift equivalent in 1-step},
written as 
$
(\M,I)\underset{1-pr}{\approx} (\M',I'),
$
 if
 there exist alphabets 
$C,D$ and
 specifications 
$
 \varphi: \Sigma \rightarrow C\cdot D,
\phi: \Sigma' \rightarrow D \cdot C
$
 and increasing sequences $n(l),n'(l)$ on $l \in \Zp$
 such that
 for each $l\in \Zp$, 
there exist
an $n(l)\times n'(l+1)$ matrix ${\P}_l$ over $C,$ 
an $n'(l)\times n(l+1)$ matrix ${\Q}_l$ over $D,$
an $n(l)\times n(l+1)$ matrix $X_l $ over $ \{0,1\}$
and 
an $n'(l)\times n'(l+1)$ matrix $X'_l$ over $ \{0,1\}$
 satisfying the following equations: 
\begin{align}
{\M}_{l,l+1} 
\overset{\varphi}{\simeq} {\P}_{2l}{\Q}_{2l+1},& \qquad
{\M'}_{l,l+1} 
\overset{\phi}{\simeq} {\Q}_{2l}{\P}_{2l+1},\label{eq:MPQ}\\
I_{l,l+1} = X_{2l}X_{2l+1},& \qquad 
I'_{l,l+1} = X'_{2l}X'_{2l+1}  \label{eq:IXX}\\
\intertext{and}
X_l{\P}_{l+1}= {\P}_{l}{X'}_{l+1},& \qquad
X'_l{\Q}_{l+1}= {\Q}_{l}{X}_{l+1}, \label{eq:XP}
\end{align}
where 
$\overset{\varphi}{\simeq}$
and
$\overset{\phi}{\simeq}$
mean the equalities through the specifications
$\varphi, \phi$, respectively. 
It follows by \eqref{eq:MPQ} that
$n(2l) = m(l)$ and $ n'(2l) = m(l)$ for $l \in \Zp$.

Two \smss 
 $(\M,I)$ and $(\M^{\prime},I')$ 
 are said to be {\it properly strong shift equivalent in N-step},
written as
$
(\M,I) \underset{N-pr}{\approx} (\M',I'),
$
 if
there exists an $N$-string of properly strong shift equivalences in 1-step
of \smss connecting between 
$(\M,I)$
and
$
(\M',I').
$
We simply call it a {\it properly strong shift equivalence}.
The properly strong shift equivalence is rephrased in terms of 
 bipartite symbolic matrix systems
and bipartite $\lambda$-graph systems.
Suppose that
$
(\M,I) \underset{1-pr}{\approx} (\M',I'),
$
Let $\Lambda$ and $\Lambda'$ be their presenting subshifts.
For $(x_n)_{n \in \Z} \in \Lambda$,
we write $ \varphi(x_n) =c_n d_n\in C\cdot D, n \in \Z.$
Put $y_n = \phi^{-1}(d_n c_{n+1})$.
As in  \cite{DocMath1999}, one knows that 
$(y_n)_{n \in \Z}$ defines an element of $\Lambda'$
such that the correspondence 
$\varPhi:
(x_n)_{n \in \Z} \in \Lambda 
\rightarrow
(y_n)_{n \in \Z} \in \Lambda'
$
yields a topological conjugacy
from $\Lambda$ to $\Lambda'$,
which is called the forward bipartite conjugacy.
If one takes $y'_n = \phi^{-1}(d_{n-1}c_n), $
the topological conjugacy
$\varPhi':
(x_n)_{n \in \Z} \in \Lambda 
\rightarrow
(y'_n)_{n \in \Z} \in \Lambda'
$
is called the backward bipartite conjugacy (cf. \cite{Nasu}).
Hence
if two  \smss are properly strong shift equivalent,
their presented subshifts  are topologically conjugate. 
Conversely,
if two subshifts $\Lambda,\Lambda'$
 are topologically conjugate,
their canonical  \smss
$(\M^{\Lambda},I^{\Lambda}),(\M^{\Lambda'},I^{\Lambda'})$ 
are
properly strong shift equivalent (\cite{DocMath1999}).

The above definition of properly strong shift equivalence for \smss
needs 
rather complicated formulations than that of strong shift equivalence
for nonnegative matrices.
The notion of  strong shift equivalence 
between two \smss 
is simpler 
and weaker than properly strong shift equivalence.
Symbolic matrix systems $(\M,I)$ and $(\M',I')$ 
 are said to be {\it  strong shift equivalent in 1-step},
written as
$
(\M,I) \underset{1-st}{\approx}(\M',I'),
$
if
 there exist alphabets 
$C,D$ and
 specifications 
$
 \varphi: \Sigma \rightarrow C\cdot D,
$
$
 \phi: \Sigma' \rightarrow D\cdot C
$
 such that
 for each $l\in \N$, 
 there exist
an $ m(l-1)\times m'(l)$ matrix $ {\H}_l $ over $ C $
 and
an $ m'(l-1)\times m(l)$ matrix $ {\K}_l $ over $ D $ 
 satisfying the following equations: 
\begin{align}
I_{l-1,l}{\M}_{l,l+1} 
\overset{\varphi}{\simeq} {\H}_l{\K}_{l+1}, & \qquad
I^{\prime}_{l-1,l}{\M}^{\prime}_{l,l+1} 
\overset{\phi}{\simeq} {\K}_l{\H}_{l+1} \label{eq:IMHK}\\
\intertext{and}
{\H}_lI^{\prime}_{l,l+1}= I_{l-1,l}{\H}_{l+1},& \qquad
{\K}_lI_{l,l+1}= I^{\prime}_{l-1,l}{\K}_{l+1}.\label{eq:HIIH}
\end{align}
Symbolic matrix systems 
 $(\M,I)$ and $(\M^{\prime},I')$ 
 are said to be {\it  strong shift equivalent in N-step},
written as
$
(\M,I) \underset{N-st}{\approx} (\M',I'),
$
 if
there exists an $N$-string of strong shift equivalences of 1-step of \smss
connecting between
$(\M,I)$
and
$ 
(\M', I').
$
We simply call it a {\it  strong shift equivalence}.
Let
$\P_l,\Q_l, X_l$ and $ X'_l$ be the matrices 
in the definition of properly  strong shift equivalence in 1-step
between 
$(\M,I) $
and
$(\M',I')$
satisfying \eqref{eq:MPQ}, \eqref{eq:IXX} and \eqref{eq:XP}.
By setting 
$$
\H_l = X_{2l-1}\P_{2l-1},\qquad
\K_l = X'_{2l-1}\Q_{2l-1},
$$
they give rise to a strong shift equivalence in 1-step between
$(\M,I)$ and $(\M',I')$.
Hence properly strong shift equivalence in 1-step implies
strong shift equivalence in 1-step.
Let
$(\M,I)$ and 
$(\M',I')$ be the symbolic matrix systems for 
$\lambda$-graph systems ${\frak L}$ and ${\frak L}^{\prime}$
respectively.
Suppose that both ${\frak L}$ and ${\frak L}^{\prime}$ are left-resolving,
and predecessor-separated.
We  know  that 
$(\M,I)$ and $(\M',I')$ are strong shift equivalent in $l$-step     
if and only if  
$(\M,I)$ and $(\M',I')$ are properly strong shift equivalent in $l$-step
(\cite{ETDS2003}).     
Hence the two notions,  
strong shift equivalence
 and
 properly strong shift equivalence, 
coincide with
each other in the canonical symbolic matrix systems.   
We finally note that shift isomorphisms between
\smss imply strong shift equivalent (\cite[Proposition 2.2]{IJM2004}).

\section{Extensions of subshifts}
Let $(\Lambda, \sigma)$
be a subshift over $\Sigma$
and $G$ a finite group.
Let $\tau:\Lambda\rightarrow G$
be a continuous function from $\Lambda$ to $G$.
Define a homeomorphism
$\sigma_{G,\tau}:G\times \Lambda \rightarrow G\times\Lambda$
by setting
$\sigma_{G,\tau}(g,x) = (g\tau(x),\sigma(x))$.
We set
$\Lambda^{G,\tau} = G \times \Lambda$.
\begin{lemma}
$(\Lambda^{G,\tau},\sigma_{G,\tau})$ 
is topologically conjugate to a subshift.
\end{lemma}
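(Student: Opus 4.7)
The plan is to exhibit an explicit topological conjugacy between $(\Lambda^{G,\tau},\sigma_{G,\tau})$ and a closed shift-invariant subset of the full shift over the alphabet $\tilde\Sigma:=G\times\Sigma$.

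First I would construct a map $\varPhi:G\times\Lambda\rightarrow\tilde\Sigma^{\Z}$ by $\varPhi(g,x)_n=(g_n,x_n)$, where $(g_n)_{n\in\Z}$ is the $G$-valued cocycle generated from $g_0=g$ by the two recursions
$$g_{n+1}=g_n\,\tau(\sigma^n x),\qquad g_{n-1}=g_n\,\tau(\sigma^{n-1}x)^{-1}.$$
These prescriptions are manifestly consistent (this uses that $G$ is a group, so inverses exist), so $(g_n)_{n\in\Z}$ is well defined.

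Next, I would check that $\varPhi$ is a continuous injection. Continuity is immediate since, for each fixed $n$, the element $g_n$ is obtained from $(g,x)$ by finitely many multiplications involving values of the continuous function $\tau$ at shifts of $x$, and $G$ is discrete. Injectivity follows from $\varPhi(g,x)_0=(g,x_0)$: the $\Sigma$-coordinate sequence recovers $x$, and the $G$-coordinate at $n=0$ recovers $g$. Since $G\times\Lambda$ is compact (as $G$ is finite and $\Lambda$ is compact), $\varPhi$ is a homeomorphism onto its image $Y:=\varPhi(G\times\Lambda)\subseteq\tilde\Sigma^{\Z}$, and $Y$ is closed in the full shift.

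Finally, a one-line computation using the cocycle identity gives
$$\varPhi(\sigma_{G,\tau}(g,x))_n=(g_{n+1},x_{n+1})=\sigma(\varPhi(g,x))_n,$$
so $\varPhi\circ\sigma_{G,\tau}=\sigma\circ\varPhi$. In particular $Y$ is shift-invariant, hence a subshift of $\tilde\Sigma^{\Z}$, and $\varPhi$ provides the required topological conjugacy. The only place requiring genuine care is the initial bookkeeping, namely verifying that the bi-infinite cocycle $(g_n)$ is a well-defined continuous function of $(g,x)$; once that is settled, shift-invariance of $Y$ and the intertwining relation are routine.
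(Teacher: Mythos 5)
Your proof is correct, but it takes a different route from the paper. The paper's argument is indirect: it defines an explicit metric on $\Lambda$, transfers it to $G\times\Lambda$, observes that $\sigma_{G,\tau}$ is expansive with respect to this metric, and then invokes the standard fact that an expansive homeomorphism of a compact, totally disconnected metric space is topologically conjugate to a subshift. You instead build the conjugacy explicitly, coding $(g,x)$ by the bi-infinite sequence $(g_n,x_n)_{n\in\Z}$ over the finite alphabet $G\times\Sigma$, where $(g_n)$ is the cocycle generated by $\tau$ from $g_0=g$; the continuity, injectivity, compactness and intertwining checks you sketch all go through, and the verification that $\varPhi\circ\sigma_{G,\tau}=\sigma\circ\varPhi$ is the straightforward induction $h_n=g_{n+1}$ you indicate. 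What your approach buys is a concrete model of the subshift: the image consists exactly of sequences $(g_n,x_n)$ with $g_{n+1}=g_n\tau(\sigma^n x)$, which is precisely the description of $\Lambda_{{\frak L}^{G,\ell}}$ that the paper arrives at later in Section 5 (via the homeomorphism $\varphi_G$), so your construction anticipates that identification. What the paper's approach buys is brevity and independence from any particular coding, at the cost of appealing to the general expansiveness criterion rather than exhibiting the conjugacy.
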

\begin{proof}
Define a metric $d$ on $\Lambda$ by
\begin{equation*}
d(x,y) =
\begin{cases}
2^{-k} & \text{ if } x \ne y \text{ and } 
     k = \Max\{n \in \Zp \mid x_{|m|} = y_{|m|} \text{for all } m\in \Z 
         \text{ with } |m| \le n \}, \\
0 & \text{ if } x =y,
\end{cases} 
\end{equation*}
which gives rise to a topology equivalent to the original product 
topology on $\Lambda$.
Consider the metric
on $G \times \Lambda$ induced by $d$ on $\Lambda$ in a natural way.
Then it is easy to see that the homeomorphism
$\sigma_{G,\tau}:G\times \Lambda \rightarrow G\times\Lambda$
is expansive relative to the metric on $G \times \Lambda$.
Hence the topological dynamical system
$(\Lambda^{G,\tau},\sigma_{G,\tau})$ 
is topologically conjugate to a subshift.
\end{proof}
The space $\Lambda^{G,\tau} = G \times \Lambda$ 
has a natural $G$-action written $\rho$ from the left
$$
\rho_g(a,x) =(ga,x), \qquad a,g \in G, x \in \Lambda
$$
which commutes with $\sigma$.
Hence we have a $G$-subshift
$(\Lambda^{G,\tau},\sigma_{G,\tau})$.
We call $(\Lambda^{G,\tau},\sigma_{G,\tau})$
the {\it extension\/} of $\Lambda$ by skewing function
$\tau:\Lambda\rightarrow G$.
It is also called the
$G$-extension of $\Lambda$ by $\tau$ for brevity.
From the view point of topological dynamical systems,
the dynamical system
$(\Lambda^{G,\tau},\sigma_{G,\tau})$
is called the skew product written 
 $(G\ltimes\Lambda,\tau\ltimes\sigma).$

Conversely,
let $\widetilde{\Lambda}$ be a subshift with
a $G$-action $\tilde{\rho}$
commuting with the shift $\tilde{\sigma}$. 
Assume that $G$ acts on $\widetilde{\Lambda}$ freely.
Let $q:\widetilde{\Lambda}\rightarrow\Lambda$ 
be the map onto the quotient space
$\Lambda = \widetilde{\Lambda}/G$ of $G$-orbits.
Let $\sigma$ be the homeomorphism on $\Lambda$
induced by $\tilde{\sigma}$.
Since the action of $G$ on $\widetilde{\Lambda}$ is free,
there exists a continuous cross section
$c:\Lambda\rightarrow \widetilde{\Lambda}$ 
such that 
the set
$$
C = \{ c(x) \in \widetilde{\Lambda} \mid x \in \Lambda\}
$$
is a clopen subset of $\widetilde{\Lambda}$ and 
$\widetilde{\Lambda}$ is homeomorphic
to the union $\cup_{g \in G} \tilde{\rho}_g(C)$
which is mutually disjoint.
Hence one may identify
$\widetilde{\Lambda}$ with
$G\times\Lambda$ through
$(g,x) \in G\times\Lambda 
            \rightarrow 
\tilde{\rho}_g(c(x)) \in \widetilde{\Lambda}$.
The cross section
$c:\Lambda \rightarrow \widetilde{\Lambda} = G \times \Lambda$ is 
identified with
$c(x) = (1,x)$.
Define $\tau : \Lambda \rightarrow G$
by setting
$\tau(x) = g$ if $\tilde{\sigma}(c(x))\in \tilde{\rho}_g(C)$.
Since $\tilde{\rho}_g, g \in G$ commute with $\tilde{\sigma}$,
we have
$q(\tilde{\sigma}(x)) = \sigma(q(x))$
for $x \in \widetilde{\Lambda}$
so that
$\tilde{\sigma}(g,x) = (g\tau(x),\sigma(x))$.
Therefore $(\widetilde{\Lambda}, \tilde{\sigma})$
is a skew product
$(G\ltimes\Lambda,\tau\ltimes\sigma)$.
\begin{proposition}[{cf. \cite{AKM}, \cite{BoyleSullivan}, \cite{FieldNicol}}]
Let $\Lambda$ be a subshift
and $G$ a finite group.
Any extension $(\Lambda^{G,\tau},\sigma_{G,\tau})$
of $\Lambda$ by a skewing function $\tau:\Lambda\rightarrow G$ 
defines a $G$-subshift.
Conversely,
any $G$-subshift $(\widetilde{\Lambda}, \tilde{\sigma})$
comes from a skew product $(G\ltimes\Lambda,\tau\ltimes\sigma)$
of a subshift $(\Lambda,\sigma)$ with a skewing function $\tau$.
\end{proposition}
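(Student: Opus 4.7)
The proposition has two halves: a forward direction and a converse. The forward direction is essentially a bookkeeping exercise once Lemma~3.1 is in hand, while the converse is the substantive part and requires building a continuous cross-section of the orbit map.

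For the forward direction, the plan is to simply check that the $G$-action $\rho_g(a,x)=(ga,x)$ on $\Lambda^{G,\tau}=G\times\Lambda$ commutes with $\sigma_{G,\tau}$: one computes $\sigma_{G,\tau}(\rho_g(a,x))=\sigma_{G,\tau}(ga,x)=(ga\tau(x),\sigma(x))=\rho_g(a\tau(x),\sigma(x))=\rho_g(\sigma_{G,\tau}(a,x))$. Combined with Lemma~3.1, which identifies $(\Lambda^{G,\tau},\sigma_{G,\tau})$ up to topological conjugacy with a subshift, this yields a $G$-subshift.

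For the converse, I start with a $G$-subshift $(\widetilde\Lambda,\tilde\sigma)$ (with $G$ acting freely, as the text stipulates) and form the quotient $\Lambda=\widetilde\Lambda/G$ with quotient map $q:\widetilde\Lambda\to\Lambda$. Because $G$ commutes with $\tilde\sigma$, the shift descends to a homeomorphism $\sigma$ on $\Lambda$, and $(\Lambda,\sigma)$ inherits compactness, zero-dimensionality and expansiveness, so it is conjugate to a subshift. The key construction is a clopen fundamental domain $C\subset\widetilde\Lambda$ meeting every orbit in exactly one point; this gives a continuous section $c:\Lambda\to\widetilde\Lambda$, namely $c(q(y))=\tilde\rho_{g(y)}(y)$ where $g(y)$ is the unique element with $\tilde\rho_{g(y)}(y)\in C$. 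The map $(g,x)\mapsto\tilde\rho_g(c(x))$ is then a $G$-equivariant homeomorphism $G\times\Lambda\cong\widetilde\Lambda$. Finally, I define $\tau:\Lambda\to G$ by $\tau(x)=g$ where $g$ is the unique element with $\tilde\sigma(c(x))\in\tilde\rho_g(C)$; continuity of $\tau$ follows because $C$ and each $\tilde\rho_g(C)$ are clopen. A direct computation in the coordinates $(g,x)$ then shows $\tilde\sigma(g,x)=(g\tau(x),\sigma(x))=\sigma_{G,\tau}(g,x)$, identifying $(\widetilde\Lambda,\tilde\sigma)$ with $(\Lambda^{G,\tau},\sigma_{G,\tau})$.

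The main obstacle is the existence of a clopen fundamental domain $C$. Here I would exploit that $\widetilde\Lambda$ is compact, metrizable and totally disconnected, and that $G$ is finite and acts freely. Freeness plus continuity ensures that for each $x\in\widetilde\Lambda$ the points $\{\tilde\rho_g(x):g\in G\}$ are distinct, so there is a clopen neighborhood $U_x$ of $x$ whose translates $\tilde\rho_g(U_x)$ are pairwise disjoint. Compactness reduces this to a finite cover $\{U_{x_1},\dots,U_{x_N}\}$, and I then build $C$ inductively by setting $C_1=U_{x_1}$ and $C_{k+1}=C_k\cup\bigl(U_{x_{k+1}}\setminus\bigcup_{g\in G}\tilde\rho_g(C_k)\bigr)$, each step preserving clopenness and the property of meeting every orbit that it intersects in at most one point; the final $C=C_N$ meets every orbit exactly once. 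Once $C$ is in hand, all the remaining steps are formal verifications.
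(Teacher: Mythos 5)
Your proof is correct and follows essentially the same route as the paper: for the forward direction, the commutation of $\rho_g$ with $\sigma_{G,\tau}$ together with Lemma 3.1, and for the converse, the quotient $\Lambda=\widetilde\Lambda/G$, a clopen cross-section $C$, the identification $\widetilde\Lambda\cong G\times\Lambda$ via $(g,x)\mapsto\tilde\rho_g(c(x))$, and $\tau(x)=g$ when $\tilde\sigma(c(x))\in\tilde\rho_g(C)$ (both arguments resting on the same freeness assumption). The only difference is that you actually construct the clopen fundamental domain by the finite-cover disjointification argument, where the paper merely asserts its existence; your construction is valid.
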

Concerning on topological conjugacy of extensions,
the following proposition is elementary and folklore.
We give its proof for the sake of completeness.
We always assume that $G$ is a finite group.
\begin{proposition}\label{prop:G-actioncoho}
Let
$(\Lambda,\sigma)$
and
$(\Lambda', \sigma')$
be subshifts.
Let 
$\tau: \Lambda \rightarrow G
$
and
$
\tau': \Lambda' \rightarrow G$ 
be continuous functions.
Let us denote by 
$\Lambda^{G,\tau}$
and
$
{\Lambda'}^{G,\tau'}
$
their  $G$-extensions. 
Then the following are equivalent:
\begin{enumerate}
\renewcommand{\labelenumi}{(\arabic{enumi})}
\item There exists a topological conjugacy 
$
\Psi: \Lambda^{G,\tau} \rightarrow {\Lambda'}^{G,\tau'}
$
commuting 
with their $G$-actions.
\item
There exist a topological conjugacy
$
\varPhi: \Lambda \rightarrow \Lambda'
$
and a continuous function
$\gamma:\Lambda\rightarrow G$
such that 
\begin{equation*}
\tau(x)
=\gamma(x)\tau'(\varPhi(x))\gamma(\sigma(x))^{-1},
\qquad
x \in \Lambda.
\end{equation*}
\end{enumerate}
\end{proposition}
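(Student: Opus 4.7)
The plan is to exhibit a bijective correspondence between $G$-equivariant topological conjugacies $\Psi: \Lambda^{G,\tau} \to {\Lambda'}^{G,\tau'}$ and pairs $(\varPhi,\gamma)$ consisting of a topological conjugacy $\varPhi:\Lambda\to\Lambda'$ together with a continuous $\gamma:\Lambda\to G$ satisfying the stated cocycle identity. The key observation is that $G$-equivariance forces such a $\Psi$ to be completely determined by its restriction to the section $\{1\}\times\Lambda$: if $\Psi(1,x) = (\gamma(x),\varPhi(x))$ for continuous maps $\gamma:\Lambda\to G$ and $\varPhi:\Lambda\to\Lambda'$, then $G$-equivariance gives $\Psi(g,x) = (g\gamma(x),\varPhi(x))$ for every $(g,x)$.

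For the implication $(1)\Rightarrow(2)$, I would start from a $G$-equivariant conjugacy $\Psi$, define $\gamma$ and $\varPhi$ as above by reading off the two coordinates of $\Psi(1,x)$, and verify continuity from continuity of $\Psi$ and the projections. Applying $\Psi$ to both sides of $\sigma_{G,\tau}(1,x)=(\tau(x),\sigma(x))$ and comparing with $\sigma'_{G,\tau'}\Psi(1,x) = (\gamma(x)\tau'(\varPhi(x)),\sigma'(\varPhi(x)))$ yields simultaneously
\begin{equation*}
\tau(x)\gamma(\sigma(x)) = \gamma(x)\tau'(\varPhi(x)), \qquad
\varPhi\circ\sigma = \sigma'\circ\varPhi,
\end{equation*}
which is exactly the cocycle identity together with the intertwining of shifts. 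To see that $\varPhi$ is a homeomorphism, I would apply the same recipe to $\Psi^{-1}$, obtaining a continuous map $\varPsi:\Lambda'\to\Lambda$, and then verify $\varPsi\circ\varPhi=\id_\Lambda$ and $\varPhi\circ\varPsi=\id_{\Lambda'}$ by applying $\Psi^{-1}\Psi$ and $\Psi\Psi^{-1}$ to points of the form $(1,x)$ and $(1,y)$ respectively.

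For $(2)\Rightarrow(1)$, given $\varPhi$ and $\gamma$, I would simply define
\begin{equation*}
\Psi(g,x) = (g\gamma(x),\varPhi(x)), \qquad g\in G,\ x\in\Lambda,
\end{equation*}
which is continuous by continuity of $\gamma$ and $\varPhi$, with continuous inverse $\Psi^{-1}(g',y)=(g'\gamma(\varPhi^{-1}(y))^{-1},\varPhi^{-1}(y))$. $G$-equivariance is immediate from the definition of the $G$-action by left multiplication in the first coordinate. Commutation with the shifts reduces to the two identities displayed above: one is $\varPhi\circ\sigma=\sigma'\circ\varPhi$ (a consequence of $\varPhi$ being a conjugacy), the other follows from the hypothesis $\tau(x)=\gamma(x)\tau'(\varPhi(x))\gamma(\sigma(x))^{-1}$ after rearrangement.

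There is no real obstacle in either direction; the entire argument is a routine unwinding of the definitions, and the only point requiring a little care is to keep track of the placement of $\gamma$ and $\gamma\circ\sigma$ in the cocycle identity (equivalently, to recognize $\gamma$ as a continuous $1$-cochain making the coboundary equal to $\tau(\tau'\circ\varPhi)^{-1}$). Since everything is driven by the identification $\Psi(g,x)=(g\gamma(x),\varPhi(x))$, the equivalence is formally transparent.
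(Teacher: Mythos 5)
Your argument is correct and follows essentially the same route as the paper: both reduce $\Psi$ via $G$-equivariance to its values on the section $\{1\}\times\Lambda$, write $\Psi(g,x)=(g\gamma(x),\varPhi(x))$, and extract (or verify) the cocycle identity by comparing $\Psi\circ\sigma_{G,\tau}$ with $\sigma'_{G,\tau'}\circ\Psi$. The only addition is your explicit check that $\varPhi$ is invertible by applying the same recipe to $\Psi^{-1}$, a point the paper leaves implicit.
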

\begin{proof}
(1) $\Longrightarrow$ (2):
For 
$(1,x) \in G\times \Lambda= \Lambda^{G,\tau}$, 
we set 
$$
\Psi(1,x)
=(\gamma(x),\varPhi(x)) \in G\times \Lambda'= {\Lambda'}^{G,\tau'} 
$$
for some continuous functions
$\gamma:\Lambda \rightarrow G$
and
$\varPhi:\Lambda \rightarrow \Lambda'$.
Since $\rho'_g \circ \Psi = \Psi \circ \rho_g$
for $g \in G$,
we have 
$
\Psi(g,x)
= (g\gamma(x),\varPhi(x)).
$
Since 
$\sigma'_{G,\tau'} \circ \Psi = \Psi\circ \sigma_{G,\tau}$
and
\begin{gather*}
\sigma'_{G,\tau'}\circ \Psi(g,x)
= \sigma'_{G,\tau'} (g\gamma(x),\varPhi(x)) 
= (g\gamma(x)\tau'(\varPhi(x)), \sigma'(\varPhi(x)) ), \\
\Psi\circ\sigma_{G,\tau}(g,x)
=\Psi(g\tau(x),\sigma(x))
= ( g\tau(x)\gamma(\sigma(x)), \varPhi(\sigma(x)) ),
\end{gather*}
we have 
\begin{equation*}
(g\gamma(x)\tau'(\varPhi(x)), \sigma'(\varPhi(x)))
=   (g\tau(x)\gamma(\sigma(x)),
\varPhi(\sigma(x))).
\end{equation*}
Hence we have
\begin{equation*}
\gamma(x)\tau'(\varPhi(x))
=\tau(x)\gamma(\sigma(x)),
\qquad
\sigma'(\varPhi(x))
=
\varPhi(\sigma(x))
\end{equation*}
which show that 
$\tau$ is cohomologous to
$\tau'\circ \varPhi$
and
$
\varPhi: \Lambda \rightarrow \Lambda'
$
is a topological conjugacy.

(2) $\Longrightarrow$ (1): 
Define
$
\Psi: \Lambda^{G,\tau} \rightarrow {\Lambda'}^{G,\tau'}
$
by setting
$$
\Psi(g, x) 
= (g \gamma(x),\varPhi(x))
\quad 
\text{ for }
(g, x) \in G \times \Lambda =\Lambda^{G,\tau}.
$$
It then follows that 
for $x =(x_n)_{n \in \Z} \in \Lambda$
\begin{equation*}
\Psi(\sigma_{G,\tau}(g,x))
  = (g \tau(x)\gamma(\sigma(x)), \varPhi(\sigma(x))),
  \qquad
\sigma_{G,\tau'}(\Psi(g,x)) 
= (g \gamma(x)\tau'(\varPhi(x)), \sigma'(\varPhi(x)))
\end{equation*}
so that
$
\Psi(\sigma_{G,\tau}(g,x)) = \sigma_{G,\tau'}(\Psi(g,x)).
$
It is routine to check that 
$\Psi$ is a homeomorphism 
so that 
$\Psi:\Lambda^{G,\tau}\rightarrow {\Lambda'}^{G,\tau'}$ 
yields a topological conjugacy.
The equality 
$\Psi \circ \rho_g = \rho'_g \circ \Psi$
for $g \in G$
is clear.
Hence 
$
\Psi: \Lambda^{G,\tau} \rightarrow {\Lambda'}^{G,\tau'}
$
gives rise to a $G$-conjugacy.
\end{proof}

\section{Extensions of $\lambda$-graph systems}
In what follows, 
we fix a finite group  $G$
and 
a $\lambda$-graph system 
 ${\frak L}=(V,E,\lambda,\iota)$
over an alphabet $\Sigma$.
Suppose that a map
$\ell:\Sigma \rightarrow G$
is given.
We set
$\Sigma^G = G \times \Sigma$.
We will construct 
a $\lambda$-graph system 
 ${\frak L}^{G,\ell}=(V^G, E^G, \lambda^G, \iota^G)$
over $\Sigma^G$
from ${\frak L}$ by the map $\ell$,
which we call an 
{\it extension of\/}${\frak L}$ by $G$,
or a $G$-extension of ${\frak L}$
by $\ell$.
We put 
$\ell_G = \ell\circ\lambda:E\rightarrow G$.
For $l \in \Zp$,
we set
$V_l^G = G \times V_l$
and
$E_{l,l+1}^G = G \times E_{l,l+1}$.
Define
$\iota^G:V_{l+1}^G\rightarrow V_l^G$
 by setting
$\iota^G(g,v) = (g,\iota(v))$ for 
$(g,v) \in V_{l+1}^G$.
We put
$e^g = (g,e) \in E_{l,l+1}^G$.
The source $s(e^g)$
and the terminal $t(e^g)$
of $e^g$
are defined respectively by
\begin{equation*}
s(e^g)=(g,s(e))\in V_l^G, \qquad
t(e^g)=(g \ell_G(e), t(e))\in V_{l+1}^G.
\end{equation*}
Define
$\lambda^G:E_{l,l+1}^G \rightarrow \Sigma^G$
by
$\lambda^G(e^g) = (g,\lambda(e))$.
We set
$V^G=\cup_{l=0}^\infty V_l^G$
and
$E^G=\cup_{l=0}^\infty E_{l,l+1}^G$.
We then have
\begin{proposition}
${\frak L}^{G,\ell}=(V^G, E^G, \lambda^G, \iota^G)$
is a $\lambda$-graph system over $\Sigma^G$.
\end{proposition}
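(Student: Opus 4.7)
The plan is to verify one by one the defining axioms of a $\lambda$-graph system listed in Section 2.2 for the quadruple $(V^G,E^G,\lambda^G,\iota^G)$, reducing each to the corresponding axiom for $\mathfrak{L}$ by forgetting the $G$-coordinate.

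First I would handle the easy bookkeeping. Each $V_l^G = G \times V_l$ and $E_{l,l+1}^G = G \times E_{l,l+1}$ is finite and the collections are disjoint in $l$. The map $\iota^G(g,v)=(g,\iota(v))$ is surjective because $\iota$ is. For the successor axiom, given $(g,v) \in V_l^G$, pick any $e \in E_{l,l+1}$ with $s(e)=v$ (which exists since $\mathfrak{L}$ satisfies this axiom); then $e^g \in E^G_{l,l+1}$ has $s(e^g) = (g,v)$. For the predecessor axiom at level $l \ge 1$, given $(g,v) \in V_l^G$, take $f \in E_{l-1,l}$ with $t(f)=v$ and then $f^{g\ell_G(f)^{-1}}$ has terminal $(g,v)$.

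Next I would verify the compatibility condition: there is an edge in $E^G_{l,l+1}$ of label $(h,\alpha)$ with terminal $(k,v)$ iff there is one in $E^G_{l-1,l}$ of the same label with terminal $\iota^G(k,v) = (k,\iota(v))$. Unpacking the definitions, the first condition says $h=k\ell(\alpha)^{-1}$ (wait: actually $\lambda^G(e^g) = (g,\lambda(e))$ forces the $G$-component of label and of source to agree, and then $t(e^g) = (g\ell_G(e), t(e))$) so the existence reduces to the existence of an edge $e \in E_{l,l+1}$ with $\lambda(e) = \alpha$ and $t(e)=v$, together with $k = h\ell(\alpha)$. The same reduction applies on the other side with $t(e)=\iota(v)$, and the two are equivalent by the axiom for $\mathfrak{L}$.

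The heart of the argument is the local property. For $(g,u) \in V^G_{l-1}$ and $(h,v) \in V^G_{l+1}$, a direct computation gives
\begin{equation*}
E^{\iota^G}((g,u),(h,v)) = \{ e^g : e \in E^{\iota}(u,v),\ g\ell_G(e)=h \},
\end{equation*}
\begin{equation*}
E_{\iota^G}((g,u),(h,v)) = \{ (e')^g : e' \in E_{\iota}(u,v),\ g\ell_G(e')=h \}.
\end{equation*}
The local property of $\mathfrak{L}$ gives a label-preserving bijection between $E^{\iota}(u,v)$ and $E_{\iota}(u,v)$; since $\ell_G = \ell \circ \lambda$ depends only on the label, the constraint $g\ell_G(\cdot)=h$ is preserved by this bijection, and tacking on the fixed $G$-coordinate $g$ produces the required label-preserving bijection between $E^{\iota^G}((g,u),(h,v))$ and $E_{\iota^G}((g,u),(h,v))$. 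This is the step I expect to be the main conceptual (though not technical) point: one must observe that the new label $(g,\lambda(e))$ determines the old label $\lambda(e)$, so $\ell_G$ is well-defined on $E^G$ via its first projection and is respected by the bijection. Finally, if $\mathfrak{L}$ is left-resolving, the same holds for $\mathfrak{L}^{G,\ell}$: $\lambda^G(e^g)=\lambda^G(f^h)$ forces $g=h$ and $\lambda(e)=\lambda(f)$, so $e \ne f$ implies $t(e)\ne t(f)$ in $\mathfrak{L}$, whence $t(e^g)\ne t(f^h)$.
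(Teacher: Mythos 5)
Your proposal is correct and its core argument --- reducing the local property of ${\frak L}^{G,\ell}$ to that of ${\frak L}$ by observing that $\ell_G=\ell\circ\lambda$ factors through the label, so the label-preserving bijection between $E^{\iota}(u,v)$ and $E_{\iota}(u,v)$ restricts to the subsets cut out by the constraint $g\ell_G(\cdot)=h$ --- is exactly the argument the paper gives. The only difference is that the paper declares the remaining axioms immediate ("it suffices to show the local property") while you verify them explicitly, which is harmless.
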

\begin{proof}
It suffices to show the local property of $\lambda$-graph system
for
${\frak L}^G$.
Take an arbitrary   
$(a,u) \in V_{l-1}^G$, $(b,v) \in V_{l+1}^G$
and fix them.
For 
$e^a\in E_{\iota}((a,u), (b,v))$,
we have 
$s(e^a) = (a,u), t(e^a) = (b,\iota(v))$. 
Hence $b = a\ell_G(e)$.
By the local property of ${\frak L}$,
one may find $f \in E^\iota(u,v)$ such that
$\iota(s(f)) = u, t(f) = v$ and $\lambda(f) = \lambda(e)$
so that $\ell_G(f) = \ell_G(e)= a^{-1}b$.
The edge $f^a =(a,f)$ belongs to
$ E^{\iota}((a,u), (b,v))$ and $\lambda^G(f^a) = \lambda^G(e^a)$.
Conversely,
for any edge $f^a \in E^{\iota}((a,u), (b,v))$,
one may find an edge 
$e^a\in E_{\iota}((a,u), (b,v))$
such that 
 $\lambda^G(f^a) = \lambda^G(e^a)$.
Hence 
there exists a bijective correspondence between
$E_\iota((a,u),(b,v))$ and 
$E^\iota((a,u),(b,v))$ preserving their labels.
\end{proof}
We say that a finite group $G$
acts on a $\lambda$-graph system
${\frak L} = (V,E,\lambda,\iota)$ 
 over $\Sigma$ if
there exists a triplet of bijective maps
$\rho_g =(\rho_g^V, \rho_g^E, \rho_g^\Sigma)$
for each $g \in G$
with 
$
\rho^V_g:V_l \rightarrow V_l,
\rho^E_g:E_{l,l+1} \rightarrow E_{l,l+1}
$
and
$\rho^\Sigma_g:\Sigma \rightarrow \Sigma
$
such that
$\rho^V_g\circ \iota
=\iota\circ\rho^V_g$  
and
$$
\rho^V_g(s(e)) = s(\rho^E_g(e)),\qquad
\rho^V_g(t(e)) = t(\rho^E_g(e)),\qquad
\rho^\Sigma_g(\lambda(e)) = \lambda(\rho^E_g(e)),\qquad
e \in E
$$
and
$
 \rho_1^* = \id_*, 
\rho_{g_1}^*\circ\rho_{g_2}^* = \rho_{g_1 g_2}^*,
 g_1, g_2 \in G
$
for 
$* = V, E, \Sigma$,
respectively.
A $\lambda$-graph system with an action of $G$ is called a $G$-$\lambda$-{\it graph system}.
We may define $G$-action $\rho$ on ${\frak L}^{G,\ell}$.
For $g \in G$, define
$\rho^V_g:V_l^G \rightarrow V_l^G,$
$\rho^E_g:E_{l,l+1}^G \rightarrow E_{l,l+1}^G$
and
$\rho^{\Sigma^G}_g:\Sigma^G \rightarrow \Sigma^G$
by 
$\rho^V_g(a,v) = (ga,v) \in V_l^G$,
$\rho^E_g(a,e) = (ga,e) \in E_{l,l+1}^G$
and
$\rho^{\Sigma^G}_g(a,\alpha) = (ga,\alpha) \in \Sigma^G$
for $g, a \in G, v\in V, e \in E, \alpha\in \Sigma$,
respectively.
It is easy to see 
$\rho_g \circ \rho_h = \rho_{gh}$.
Hence the $G$-extension
${\frak L}^{G,\ell}$ is a $G$-$\lambda$-graph system.

The correspondences
\begin{equation*}
q_{V^G}: V_l^G\rightarrow V_l, 
\qquad
q_{E^G}: E_{l,l+1}^G\rightarrow E_{l,l+1},
\qquad l \in \Zp 
\end{equation*}
defined by
$q_{V^G}(g,u) = u,
 q_{V^G}(g,e) = e$
yield a surjective homomorphism 
of $\lambda$-graph systems,
which we write
$q:{\frak L}^{G,\ell} \rightarrow {\frak L}$.

Let $\eta:E^G\rightarrow G$ 
be the map defined by
$$
\eta(e^g) = g \quad \text{ for } e^g = (g,e) \in E^G_{l,l+1}.
$$
The map $\eta$ satisfies the following conditions:
\begin{gather} 
\eta(e^g) = \eta(f^h)\cdot \ell_G(f)
\quad
\text{ for } 
e^g, f^h \in E_{l,l+1}^G,\, l\in \N 
\text{ with }
t(f^h)=s(e^g), \label{eq:eta2} \\
\eta(e^g) = \eta({e'}^{g'}) \quad
\text{ for } e^g, {e'}^{g'} \in E_{0,1}^G
\text{ with } s(e^g) = s({e'}^{g'}). \label{eq:eta3}
\end{gather}
Conversely, we have the following proposition.
\begin{proposition}
Let ${\frak L} = (V,E,\lambda,\iota)$ 
be a $\lambda$-graph system over $\Sigma$
with an action $\rho$ of $G$
on ${\frak L}$.
Suppose that 
$\Sigma = G \times \Sigma_\circ$
and
there exist maps
$\eta_\circ:E\rightarrow G,$
$r_\circ:E\rightarrow \Sigma_\circ$
and
$\ell_\circ: \Sigma_\circ \rightarrow G$
satisfying the following three conditions: 
\begin{equation}
\lambda(e) = (\eta_\circ(e), r_\circ(e)),
\quad
\eta_\circ(\rho^E_g(e)) = g\eta_\circ(e),
\quad
r_\circ(\rho^E_g(e))=r_\circ(e) \label{eq:ell1}
\end{equation}
for
$g \in G, e \in E$ and 
\begin{gather} 
\eta_\circ(e) = \eta_\circ(f)\cdot \ell_\circ(r_\circ(f))
\quad
\text{ for } 
e, f \in E_{l,l+1},\, l\in \N 
\text{ with }
t(f)=s(e), \label{eq:ell2} \\
\eta_\circ(e) = \eta_\circ(e') \quad
\text{ for } e, e' \in E_{0,1}
\text{ with } s(e) = s(e'). \label{eq:ell3}
\end{gather}
Then there exist a $\lambda$-graph system
${\frak L}_\circ = (V_\circ, E_\circ, \lambda_\circ, \iota_\circ)$ 
over $\Sigma_\circ$
and its $G$-extension
${\frak L}_\circ^{G,\ell_\circ}
$
by $\ell_\circ$
such that 
${\frak L}_\circ^{G,\ell_\circ}
$
is isomorphic to
${\frak L}$
as $G$-$\lambda$-graph systems.
\end{proposition}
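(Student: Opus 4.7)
The plan is to construct ${\frak L}_\circ$ as the orbit space ${\frak L}/G$ and then build an explicit isomorphism of $G$-$\lambda$-graph systems $\Phi:{\frak L}\to{\frak L}_\circ^{G,\ell_\circ}$ by means of a canonical cocycle. First I would observe that \eqref{eq:ell1} forces the $G$-action to be free on edges: if $\rho^E_g(e)=e$ then $g\,\eta_\circ(e)=\eta_\circ(e)$, so $g=1$; freeness on each $V_l$ then follows because every vertex has a successor (or, at level $\geq 1$, a predecessor) and the source/terminal maps are equivariant. I would then set $V_{\circ,l}=V_l/G$ and $E_{\circ,l,l+1}=E_{l,l+1}/G$, with source, terminal, and $\iota_\circ$ descending from ${\frak L}$, and label map $\lambda_\circ([e]):=r_\circ(e)\in\Sigma_\circ$, well-defined by the $r_\circ$-invariance clause of \eqref{eq:ell1}. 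For the local property of ${\frak L}_\circ$, fixing representatives $u\in[u]$ and $v\in[v]$, the orbits in $E^\iota_\circ([u],[v])$ and $E_{\iota\circ}([u],[v])$ biject respectively with $\bigcup_{g\in G}E^\iota(\rho^V_g(u),v)$ and $\bigcup_{g\in G}E_\iota(u,\rho^V_g(v))$; a piecewise application of ${\frak L}$'s local property in each $g$-stratum, composed with the translation $\rho^E_{g^{-1}}$, gives the required $r_\circ$-preserving bijection.

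The technical core is the construction of a canonical equivariant cocycle $\eta_V:V\to G$ trivialising the $G$-torsor on each $V_l$. Define $\eta_V(v)$ to be the common value $\eta_\circ(e)$ for any edge $e$ with $s(e)=v$: at level $0$ this is well-defined by \eqref{eq:ell3}, and at level $l\geq 1$ by fixing any incoming edge $f$ to $v$ and applying \eqref{eq:ell2} to deduce $\eta_\circ(e)=\eta_\circ(f)\,\ell_\circ(r_\circ(f))$, independently of $e$. Equivariance $\eta_V(\rho^V_g v)=g\,\eta_V(v)$ follows from the equivariance clause of \eqref{eq:ell1}. The key identity $\eta_V(\iota(v))=\eta_V(v)$ for $v\in V_{l+1}$, $l\geq 0$, would be obtained by picking an outgoing edge $e\in E_{l+1,l+2}$ with $s(e)=v$, setting $v'':=t(e)\in V_{l+2}$, and invoking the local property of ${\frak L}$ at level $l+1$: it produces $e_1\in E_{l,l+1}$ with $s(e_1)=\iota(v)$, $t(e_1)=\iota(v'')$ and $\lambda(e_1)=\lambda(e)$, so that $\eta_V(\iota(v))=\eta_\circ(e_1)=\eta_\circ(e)=\eta_V(v)$. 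As a byproduct one also obtains the terminal-side formula $\eta_V(t(e))=\eta_\circ(e)\,\ell_\circ(r_\circ(e))$ via \eqref{eq:ell2}.

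Finally, I would define the isomorphism by $\Phi^V(v)=(\eta_V(v),[v])$, $\Phi^E(e)=(\eta_\circ(e),[e])$, under the obvious identification $\Sigma=G\times\Sigma_\circ$. Bijectivity at every level follows from the freeness of the action together with the equivariance of $\eta_V$ and $\eta_\circ$. Commutation with source is the defining identity $\eta_V(s(e))=\eta_\circ(e)$; commutation with terminal is $\eta_V(t(e))=\eta_\circ(e)\,\ell_\circ(r_\circ(e))$, which matches exactly the terminal rule $t(e^g)=(g\,\ell_G(e),t(e))$ of the $G$-extension; commutation with $\iota$ is the identity $\eta_V\circ\iota=\eta_V$ established above; label preservation is the decomposition clause of \eqref{eq:ell1}; and $G$-equivariance is immediate from \eqref{eq:ell1}. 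The main obstacle is the identity $\eta_V\circ\iota=\eta_V$, which does not follow from the cocycle conditions \eqref{eq:ell2} and \eqref{eq:ell3} alone but requires the local property of ${\frak L}$; in particular it is the boundary case $v\in V_1$ (where \eqref{eq:ell2} does not directly connect the source- and terminal-based formulas for $\eta_V$) that forces the use of local property to bridge level $0$ with level $1$.
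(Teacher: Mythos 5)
Your proof is correct and follows essentially the same route as the paper: form the quotient ${\frak L}/G$, use \eqref{eq:ell2}--\eqref{eq:ell3} to show that $\eta_\circ$ descends to a well-defined $G$-valued trivialization of the orbits, and assemble the isomorphism (you map ${\frak L}\to{\frak L}_\circ^{G,\ell_\circ}$ by $v\mapsto(\eta_V(v),[v])$, $e\mapsto(\eta_\circ(e),[e])$, while the paper constructs the inverse map via the section $\jmath^E([e])=\rho^E_{\eta_\circ(e)^{-1}}(e)$). The one point where you go beyond the paper's write-up is the explicit verification of $\iota$-compatibility, $\eta_V\circ\iota=\eta_V$, using the local property of ${\frak L}$ to transport an outgoing edge of $v$ to an outgoing edge of $\iota(v)$ with the same label; this step is left implicit in the paper, and your argument for it is correct.
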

\begin{proof}
We first note that 
the condition \eqref{eq:ell2} implies the same condition \eqref{eq:ell3} 
for $e, e' \in E_{l,l+1}$ with $l\in \N$.
In fact, suppose that $e,e' \in E_{l,l+1}$ for $l \in \N$
satisfy $s(e) = s(e')$.
Since $l\ge 1$,
One may take $f \in E_{l-1,l}$ such that
$s(e) = s(e') = t(f)$.
By the condition \eqref{eq:ell2},
we have
\begin{equation} 
\eta_\circ(e) = \eta_\circ(f)\cdot \ell_\circ(r_\circ(f))
        = \eta_\circ(e') \label{eq:etacirc}
\end{equation} 
so that the condition \eqref{eq:ell3} holds 
for $e, e' \in E_{l,l+1}$ with $l\in \N$.

Consider the quotient spaces
of $V, E$ which we denote by
$V_{\circ,l} = V_l/G$
and
$E_{\circ,l,l+1} = E_{l,l+1}/G$.
Denote by
$[v] \in V_{\circ,l}$ 
and
$[e] \in E_{\circ,l,l+1}$
the equivalence class of $v \in V_{l}$
and that of $e \in E_{l,l+1}$,
respectively.
We set
$$
s([e]) = [s(e)],\qquad
t([e]) = [t(e)],\qquad
\lambda_\circ([e]) = r_\circ(e),\qquad
\iota_\circ([v]) = [\iota(v)].
$$
It is easy to see that 
${\frak L}_\circ = (V_\circ, E_\circ, \lambda_\circ, \iota_\circ)$ 
is a $\lambda$-graph system over $\Sigma_\circ$
with the map
$\ell_\circ: \Sigma_\circ \rightarrow G$.

We will define maps
$\jmath^E: E_{\circ, l,l+1} \rightarrow E_{l,l+1}$
and
$\jmath^V: V_{\circ, l} \rightarrow V_{l}$
for each $l \in \Zp$.
Set
\begin{equation}
\jmath^E([e]) = \rho^E_{\eta_\circ(e)^{-1}}(e) 
\quad
\text{ for } [e] \in E_{\circ, l,l+1}.
\label{eq:etaE}
\end{equation}
If $[e'] = [e]$ for some $e' \in E_{l,l+1}$,
one sees 
$e' = \rho^E_g(e)$
for some  $g \in G$.
By \eqref{eq:ell1}, one has 
$\eta_\circ(e') = \eta_\circ(\rho^E_g(e)) = g\eta_\circ(e)$ 
so that
\begin{equation*}
\rho^E_{\eta_\circ(e')^{-1}}(e')
=
\rho^E_{\eta_\circ(e)^{-1}g^{-1}}(\rho^E_g(e))
=\rho^E_{\eta_\circ(e)^{-1}}(e).
\end{equation*} 
This shows that the map $\jmath^E([e])$ defined in \eqref{eq:etaE}
is well-defined.
For
$[v] \in V_{\circ, l}$,
take 
$e \in E_{l,l+1}$ such that $v = s(e)$.
Set
\begin{equation}
\jmath^V([v]) = s(\jmath^E([e]))
 (= s(\rho^E_{\eta_\circ(e)^{-1}}(e))
  =  \rho^V_{\eta_\circ(e)^{-1}}(s(e))). \label{eq:etaV}
\end{equation}
If $[v] = [v'] \in V_{\circ,l}$ 
for some $v' \in V_l$,
one may take  $e' \in E_{l,l+1}$ 
and $g \in G$
such that
$v' = s(e')$
and
$\rho_g^V(v) = v'$ so that
$s(\rho_g^E(e)) = s(e')$.
By \eqref{eq:etacirc}, 
one has 
$\eta_\circ(e') = \eta_\circ(\rho^E_g(e)) = g\eta_\circ(e)$ 
so that we have
\begin{equation*}
\jmath^V([v'])
 = \rho^V_{\eta_\circ(e')^{-1}}(s(e')) 
 = \rho^V_{(g\eta_\circ(e))^{-1}}(s(\rho^E_g(e))) 
 = \rho^V_{\eta_\circ(e)^{-1}}(s(e)) 
=\jmath^V([v]).
\end{equation*} 
This shows that the map $\jmath^V([v])$ defined in \eqref{eq:etaV}
is well-defined.

We next define 
$\xi:{\frak L}_\circ^{G,\ell_\circ} \rightarrow {\frak L}$
by
a pair of bijective maps
$\xi =(\xi^V, \xi^E)$
such that 
$\xi^E: E_{\circ,l,l+1}^G \rightarrow E_{l,l+1}$
and
$\xi^V:V_{\circ,l}^G \rightarrow V_l$
for each $l \in \Zp$
by setting
$$
\xi^E(g,[e]) = \rho^E_g(\jmath^E([e])),
\qquad
\xi^V(g,[v]) = \rho^V_g(\jmath^V([v])).
$$
The map $\xi$ is shown to be compatible to the structure of the 
$\lambda$-graph systems 
${\frak L}_\circ^{G,\ell_\circ}$ and ${\frak L}$
in the following way:
\begin{align*}
s(\xi^E(g,[e]))
&= \rho_g^V ( s(\jmath^E ( [e] ))) \\
&= \rho_g^V(\rho^V_{\eta_\circ(e)^{-1}}(s(e))) \\
&= \rho_g^V(\jmath^V([s(e)])) \\
&= \xi^V((g,[s(e)])) \\
&= \xi^V(s(g,[e])).
\end{align*}
We also see that 
\begin{equation*}
t(\xi^E(g,[e])) 
 = \rho_g^V(t(\jmath^E([e]))) 
 = \rho_g^V(\rho^V_{\eta_\circ(e)^{-1}}(t(e))). 
\end{equation*}
Take $f \in E$ such that 
$t(e) = s(f)$ and hence 
$\eta_\circ(f) = \eta_\circ(e) \ell_\circ(r_\circ(e))$.
It then follows that
\begin{align*}
\rho_g^V(\rho^V_{\eta_\circ(e)^{-1}}(t(e)))
&=
\rho_g^V(\rho^V_{\ell_\circ(r_\circ(e))}(\rho^V_{\eta_\circ(f)^{-1}}(s(f)))) 
  \\
&=\rho_g^V(\rho^V_{\ell_\circ(r_\circ(e))}(\jmath^V([t(e)])))\\
&=\xi^V(g\ell_\circ(r_\circ(e)),[t(e)])) \\ 
&=\xi^V(g\ell_{\circ G}([e]),t([e]))) \\
&= \xi^V(t(g, [e])) 
\end{align*}
so that
\begin{equation*}
s(\xi^E(g,[e])) = \xi^V(s(g,[e])), \qquad
t(\xi^E(g,[e])) = \xi^V(t(g,[e])).
\end{equation*}
Since  the equalities for $[e] \in E_{\circ,l,l+1}$
hold
\begin{equation*}
\lambda(\jmath^E([e]))
 = (\eta_\circ(\rho^E_{\eta_\circ(e)^{-1}}(e)), 
    r_\circ(\rho^E_{\eta_\circ(e)^{-1}}(e))) 
 = (\eta_\circ(e)^{-1} \eta_\circ(e), r_\circ(e))  
 =(1,r_\circ(e)) \in G\times \Sigma_\circ,
\end{equation*}
we have
\begin{equation*}
\lambda(\xi^E(g,[e]))
 = \rho^\Sigma_g(\lambda(\jmath^E([e]))) 
 = (g, \lambda_\circ([e]))  
 = \lambda^G_\circ(g,[e]).
\end{equation*}
Suppose that
$$
\xi^E(g,[e]) = \xi^E(g',[e'])
\quad
\text{  
for some }
e, e' \in E_{l,l+1}
\text{  and }
g,g' \in G.
$$
It then follws that 
\begin{equation}
\rho^E_g(\jmath^E([e]))
=
\rho^E_{g'}(\jmath^E([e']))
\label{eq:rhoegj} 
\end{equation}
and hence
$$
\rho^E_{g \eta_\circ(e)^{-1}}(e) 
=
\rho^E_{g' \eta_\circ(e')^{-1}}(e') 
$$
so that 
$[e] = [e']$.
By  \eqref{eq:rhoegj}, we have
$g=g'$.
Suppose next that
$$
\xi^V(g,[v]) = \xi^V(g',[v'])
\quad
\text{  
for some }
v, v' \in V_l
\text{  and }
g,g' \in G.
$$
Take $e,e' \in E_{l,l+1}$
such that 
$v=s(e), v' = s(e')$.
It then follows that 
\begin{equation*}
\rho^V_g(s(\jmath^E([e])))
=
\rho^V_{g'}(s(\jmath^E([e'])))
\end{equation*}
and hence
$$
\rho^V_{g \eta_\circ(e)^{-1}}(s(e)) 
=
\rho^V_{g' \eta_\circ(e')^{-1}}(s(e')) 
$$
so that 
$[s(e)] = [s(e')]$
and
$g=g'$.
Therefore both $\xi^E$ and $\xi^V$ are injective.
Their surjectivities are easily seen.
Hence the map 
$\xi =(\xi^V, \xi^E)$
gives rise to an isomorphism
$\xi: {\frak L}_\circ^{G,\ell_\circ} \rightarrow {\frak L}$
of $\lambda$-graph systems.
\end{proof}
By \eqref{eq:eta2}, \eqref{eq:eta3}, 
we obtain the following characterization of $G$-extension of $\lambda$-graph systems.
\begin{theorem}\label{thm:extlambda}
Let
${\frak L} =(V, E,\lambda, \iota)$
be a $\lambda$-graph system over $\Sigma$
and $G$ be a finite group.
Then ${\frak L}$ is a $G$-extension of a 
$\lambda$-graph system ${\frak L}_\circ$
over $\Sigma_\circ$ 
if and only if 
there exist a free action $\rho$ of $G$ on ${\frak L}$
and maps 
$\eta_\circ: E \rightarrow G, 
r_\circ: E \rightarrow \Sigma_\circ$
and
$
\ell_\circ: \Sigma_\circ \rightarrow G
$
such that 
$\Sigma = G \times \Sigma_\circ$ and
\begin{equation}
\lambda(e) = (\eta_\circ(e), r_\circ(e)),
\quad
\eta_\circ(\rho^E_g(e)) = g\eta_\circ(e),
\quad
r_\circ(\rho^E_g(e))=r_\circ(e) \label{eq:thell1}
\end{equation}
for
$g \in G, e \in E$ and 
\begin{gather} 
\eta_\circ(e) = \eta_\circ(f)\cdot \ell_\circ(r_\circ(f))
\quad
\text{ for } 
e, f \in E_{l,l+1},\, l\in \N 
\text{ with }
t(f)=s(e), \label{eq:thell2} \\
\eta_\circ(e) = \eta_\circ(e') \quad
\text{ for } e, e' \in E_{0,1}
\text{ with } s(e) = s(e'). \label{eq:thell3}
\end{gather}
\end{theorem}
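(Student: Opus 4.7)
The plan is to split the biconditional into its two directions and deduce each from material already obtained in this section, namely the explicit construction of ${\frak L}^{G,\ell}$ together with the map $\eta$ satisfying \eqref{eq:eta2}--\eqref{eq:eta3}, and the converse supplied by the preceding proposition.

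For the reverse direction, I would simply invoke the preceding proposition. Its hypotheses are exactly \eqref{eq:thell1}--\eqref{eq:thell3}, and its conclusion produces a $\lambda$-graph system ${\frak L}_\circ$ (obtained as the quotient with $V_\circ = V/G$, $E_\circ = E/G$) together with an isomorphism $\xi:{\frak L}_\circ^{G,\ell_\circ}\rightarrow {\frak L}$ of $G$-$\lambda$-graph systems, which is precisely the assertion that ${\frak L}$ is a $G$-extension of ${\frak L}_\circ$ by $\ell_\circ$. The point I would want to flag carefully is that freeness of $\rho$ is essential for the bijectivity of $\xi^V$ and $\xi^E$: without it, distinct pairs $(g,[v])$ related by an isotropy element could collapse under $\xi^V$.

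For the forward direction, I would read the required data directly off the construction. Writing ${\frak L}={\frak L}_\circ^{G,\ell_\circ}$, one has $\Sigma=G\times\Sigma_\circ$ and $E=G\times E_\circ$ by construction, and the canonical action $\rho_g(a,\cdot)=(ga,\cdot)$ is free because left multiplication of $G$ on itself is free. Set $\eta_\circ(e)=g$ for $e=(g,e_\circ)\in E$ — this is exactly the map $\eta$ introduced just before the preceding proposition — and $r_\circ(e)=\lambda_\circ(e_\circ)$. Condition \eqref{eq:thell1} then drops out of the definition $\lambda^G(e^g)=(g,\lambda(e))$ together with the definitions of $\rho^E_g$ and $\rho^\Sigma_g$, and conditions \eqref{eq:thell2} and \eqref{eq:thell3} are literally \eqref{eq:eta2} and \eqref{eq:eta3}, short computations from the source/terminal rules $s(e^g)=(g,s(e))$ and $t(e^g)=(g\ell_G(e),t(e))$.

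I do not anticipate a genuine obstacle: the theorem is essentially a packaging of results already in hand. The only subtle bookkeeping point is the implicit role of freeness of $\rho$ in the bijectivity arguments of the preceding proposition, which I would make explicit when quoting it.
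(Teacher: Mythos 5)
Your proposal is correct and follows essentially the same route as the paper: the converse direction is exactly an application of the preceding quotient-construction proposition, and the forward direction reads $\eta_\circ$, $r_\circ$ off the explicit construction of ${\frak L}^{G,\ell}$, with \eqref{eq:thell2} and \eqref{eq:thell3} being the already-established properties \eqref{eq:eta2} and \eqref{eq:eta3} of the map $\eta$. Your remark on freeness is a fine point to make explicit; note also that the equivariance $\eta_\circ(\rho^E_g(e))=g\eta_\circ(e)$ in \eqref{eq:thell1} already forces the action on edges to be free.
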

For a finite directed labeled graph ${\cal G} =(V,E,\lambda)$ over alphabet $\Sigma$
with a labeling map $\lambda:E\rightarrow \Sigma$,
we have a $\lambda$-graph system 
${\frak L}_{\mathcal{G}}$ by setting
$$
V_l = V, \qquad E_{l,l+1} = E, \qquad 
\iota = \id
\qquad
\text{ for }
l \in \Zp.
$$
$G$-extension of a finite directed graph is written in \cite[p. 6]{BoyleSullivan}.
That is easily generalized to  a finite directed labeled graph. 
It is direct to see that $G$-extension of the $\lambda$-graph system 
${\frak L}_{\mathcal{G}}$
is the $\lambda$-graph system of the $G$-extension of ${\mathcal{G}}$.
Then the condition \eqref{eq:thell3} is deduced from \eqref{eq:thell2}.
Hence we have the following corollary.
\begin{corollary}
Let
${\cal G} =(V,E, \lambda)$
be a finite  directed labeled graph over $\Sigma$
with a labeling map $\lambda:E\rightarrow \Sigma$,
and $G$ be a finite group.
Then ${\cal G}$ is a $G$-extension of a 
finite labeled directed graph ${\cal G}_\circ$
over $\Sigma_\circ$ 
if and only if 
there exist a free action $\rho$ of $G$ on ${\cal G}$
and maps  
$\eta_\circ: E \rightarrow G, 
r_\circ: E \rightarrow \Sigma_\circ
$
and
$
\ell_\circ: \Sigma_\circ \rightarrow G 
$
such that 
$\Sigma = G \times \Sigma_\circ$ and
\begin{equation*}
\lambda(e) = (\eta_\circ(e), r_\circ(e)),
\qquad
\eta_\circ(\rho^E_g(e)) = g\eta_\circ(e),
\quad
r_\circ(\rho^E_g(e))=r_\circ(e) \label{eq:ellc1}
\end{equation*}
for
$g \in G, e \in E$ and 
\begin{equation*} 
\eta_\circ(e) = \eta_\circ(f)\cdot \ell_\circ(r_\circ(f))
\quad
\text{ for } 
e, f \in E
\text{ with }
t(f)=s(e). 
\end{equation*}
\end{corollary}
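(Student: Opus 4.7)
The plan is to reduce the corollary directly to Theorem \ref{thm:extlambda} applied to the $\lambda$-graph system ${\frak L}_{\cal G}$ associated with ${\cal G}$, using the observation recorded just before the statement that $G$-extension commutes with the passage ${\cal G} \rightsquigarrow {\frak L}_{\cal G}$. Since $V_l = {\cal V}$, $E_{l,l+1} = {\cal E}$ and $\iota_{l,l+1} = \id$ for every $l \in \Zp$, giving a (free) $G$-action on ${\cal G}$ is the same datum as giving a (free) $G$-action on ${\frak L}_{\cal G}$, and the equivariance conditions \eqref{eq:thell1} of the theorem translate verbatim into the corresponding equivariance conditions displayed in the corollary.

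For the forward implication, suppose ${\cal G}$ is a $G$-extension of ${\cal G}_\circ$ by $\ell_\circ$. Then ${\frak L}_{\cal G}$ is isomorphic to the $G$-extension ${\frak L}_{{\cal G}_\circ}^{G,\ell_\circ}$ of ${\frak L}_{{\cal G}_\circ}$, so Theorem \ref{thm:extlambda} produces the free action $\rho$ and the three maps $\eta_\circ, r_\circ, \ell_\circ$ fulfilling \eqref{eq:thell1}--\eqref{eq:thell3}. Stripping away the level structure gives exactly the two displayed conditions of the corollary, since the single identity $\eta_\circ(e) = \eta_\circ(f)\cdot\ell_\circ(r_\circ(f))$ for $t(f) = s(e)$ is just \eqref{eq:thell2} in a setting where all edge sets coincide with ${\cal E}$.

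For the backward implication, the task is to verify that the hypothesis of the corollary — which omits the analogue of \eqref{eq:thell3} — is still strong enough to invoke Theorem \ref{thm:extlambda} on ${\frak L}_{\cal G}$. The key step is to recover \eqref{eq:thell3} from the hypothesis. Given $e, e' \in {\cal E}$ with $s(e) = s(e')$, the essentiality of ${\cal G}$ (inherited from the standing hypothesis that every vertex of a $\lambda$-graph system has a predecessor, applied to ${\frak L}_{\cal G}$) supplies an edge $f \in {\cal E}$ with $t(f) = s(e) = s(e')$; applying condition (D) of the corollary twice yields
\begin{equation*}
\eta_\circ(e) = \eta_\circ(f) \cdot \ell_\circ(r_\circ(f)) = \eta_\circ(e'),
\end{equation*}
which, transported into the level structure $E_{l,l+1} = {\cal E}$ of ${\frak L}_{\cal G}$, furnishes both \eqref{eq:thell2} and \eqref{eq:thell3} at every level, including $l = 0$. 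Theorem \ref{thm:extlambda} then produces a $\lambda$-graph system ${\frak L}_\circ$ and an isomorphism ${\frak L}_{\cal G} \cong {\frak L}_\circ^{G,\ell_\circ}$ of $G$-$\lambda$-graph systems, and under the correspondence between finite labeled directed graphs and their associated $\lambda$-graph systems, ${\frak L}_\circ$ comes from a finite labeled graph ${\cal G}_\circ$ with ${\cal G}$ exhibited as its $G$-extension by $\ell_\circ$.

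The only real subtlety is the essentiality hypothesis used in the backward direction; it is the precise reason one may safely omit \eqref{eq:thell3} in the reduction from the level-indexed $\lambda$-graph system framework to the single-graph framework, and it is exactly the condition needed so that the recursion in (D) propagates to every vertex of ${\cal G}$.
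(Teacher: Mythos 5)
Your proposal is correct and follows essentially the same route as the paper, which justifies the corollary in one line before its statement by observing that the $G$-extension of ${\frak L}_{\mathcal{G}}$ is the $\lambda$-graph system of the $G$-extension of ${\mathcal{G}}$ and that condition \eqref{eq:thell3} is deduced from \eqref{eq:thell2}. Your explicit derivation of \eqref{eq:thell3} from \eqref{eq:thell2} via essentiality of ${\cal G}$ is exactly the argument the paper uses for levels $l\ge 1$ in the proof of the preceding proposition, correctly extended to level $0$ where the essentiality of the finite graph supplies the needed incoming edge.
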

\section{Subshifts presented by extensions of $\lambda$-graph systems}
In this section we study the subshifts presented by extensions of $\lambda$-graph systems.
Let ${\frak L} =(V,E,\lambda,\iota)$
be a $\lambda$-graph system over $\Sigma$.
Suppose that a map $\ell:\Sigma \rightarrow G$
is given. 
Define
$\ell_G: E\rightarrow G$ by
$\ell_G(e) = \ell(\lambda(e)) \in G$ for $e \in E$. 
Let us denote by
$\Lambda_{{\frak L}^{G,\ell}}^+$
the right one-sided subshift defined by the extension ${\frak L}^{G,\ell}$
of $\lambda$-graph system ${\frak L}$ by $G$.
The following lemma is obvious.
\begin{lemma}
For $g_n \in G, e_n \in E_{n-1,n}, n=1,2,\dots$,
the sequence
$(g_n,e_n) \in E_{n-1,n}^G, n \in \N$
gives an admissible path in the $\lambda$-graph system ${\frak L}^{G,\ell}$
if and only if
$t(e_n) = s(e_{n+1}), g_{n+1} = g_n\ell_G(e_n)$ for all $n \in \N$.
\end{lemma}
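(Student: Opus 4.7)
The plan is to unwind the definitions of source and target in the $G$-extension ${\frak L}^{G,\ell}$ and observe that the stated conditions are exactly the admissibility of consecutive edges. Recall that the sequence $(g_n, e_n) \in E_{n-1,n}^G$ is an admissible path in ${\frak L}^{G,\ell}$ precisely when $t((g_n, e_n)) = s((g_{n+1}, e_{n+1}))$ holds in $V_n^G$ for every $n \in \N$.

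By the definitions given just before the lemma, for $e^g = (g,e) \in E_{l,l+1}^G$ we have
\begin{equation*}
s(e^g) = (g, s(e)) \in V_l^G, \qquad t(e^g) = (g\,\ell_G(e), t(e)) \in V_{l+1}^G.
\end{equation*}
Applying this to $e_n^{g_n} = (g_n, e_n)$ and $e_{n+1}^{g_{n+1}} = (g_{n+1}, e_{n+1})$ gives
\begin{equation*}
t((g_n, e_n)) = (g_n \ell_G(e_n), t(e_n)), \qquad s((g_{n+1}, e_{n+1})) = (g_{n+1}, s(e_{n+1})).
\end{equation*}
Since $V_n^G = G \times V_n$ is a Cartesian product, these two elements of $V_n^G$ coincide if and only if their first coordinates agree, namely $g_{n+1} = g_n \ell_G(e_n)$, and their second coordinates agree, namely $t(e_n) = s(e_{n+1})$. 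Doing this componentwise comparison for every $n \in \N$ yields both directions of the equivalence simultaneously.

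There is essentially no obstacle: the statement is a direct unwinding of how source and target are defined on $E_{l,l+1}^G = G \times E_{l,l+1}$, together with the trivial fact that equality in a product space is equality coordinatewise. The entire proof is two or three lines.
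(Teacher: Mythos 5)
Your proof is correct and is exactly the argument the paper intends: the paper states this lemma without proof, declaring it obvious, and the intended justification is precisely the componentwise unwinding of $s(e^g)=(g,s(e))$ and $t(e^g)=(g\ell_G(e),t(e))$ that you give. Nothing is missing.
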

The above lemma says that 
$(g_n,\lambda(e_n))_{n \in \N} \in \Lambda_{{\frak L}^{G,\ell}}^+$
if and only if 
$t(e_n) = s(e_{n+1}), g_{n+1} = g_n\ell_G(e_n)$ for all $n \in \N$.
\begin{lemma}
The correspondence
$\varphi^+_G:
(g_n,\lambda(e_n))_{n \in \N} \in  \Lambda_{{\frak L}^{G,\ell}}^+
\rightarrow
(g_1,(\lambda(e_n))_{n \in \N}) \in G\times \Lambda_{\frak L}^+
$
gives rise to a homeomorphism between
$ \Lambda_{{\frak L}^{G,\ell}}^+$ and $G\times \Lambda_{\frak L}^+$.
\end{lemma}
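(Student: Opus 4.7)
The plan is to read the claim directly off the preceding lemma. That lemma characterizes the admissible infinite paths in ${\frak L}^{G,\ell}$ as exactly those sequences $(g_n,\lambda(e_n))_{n\in\N}$ for which (i) the underlying edges satisfy $t(e_n)=s(e_{n+1})$ for every $n$, and (ii) the group coordinates obey the recursion $g_{n+1}=g_n\ell_G(e_n)=g_n\ell(\lambda(e_n))$. Condition (i) is equivalent to $(\lambda(e_n))_{n\in\N}\in\Lambda_{\frak L}^+$, while (ii) shows that the entire tail $(g_n)_{n\ge 2}$ is determined by $g_1$ together with the label sequence.

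From this it is a short check that $\varphi^+_G$ is a bijection. Well-definedness of $\varphi^+_G$ is clause (i); injectivity is clause (ii), because if two admissible paths have the same label sequence and the same $g_1$, then induction via the recursion forces the remaining group coordinates to agree. For surjectivity, given $(g,(x_n)_{n\in\N})\in G\times \Lambda_{\frak L}^+$, I would set $g_1:=g$, define $g_{n+1}:=g_n\ell(x_n)$ recursively, pick any edges $e_n$ with $\lambda(e_n)=x_n$ forming an admissible path in ${\frak L}$, and observe that $(g_n,\lambda(e_n))_{n\in\N}$ meets both (i) and (ii), hence lies in $\Lambda_{{\frak L}^{G,\ell}}^+$ and maps to $(g,(x_n))$.

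For the topological part, $\varphi^+_G$ is plainly continuous: the first factor is the $G$-projection of the first letter and the second factor is the componentwise $\Sigma$-projection, both continuous in the product topology. The inverse sends $(g,(x_n))$ to $(g_n,x_n)_{n\in\N}$ with $g_n = g\cdot\ell(x_1)\cdots\ell(x_{n-1})$; since $G$ is discrete, each coordinate depends on only finitely many entries of the input, so the inverse is continuous as well. Alternatively, both spaces are compact Hausdorff, so a continuous bijection is automatically a homeomorphism. I do not foresee any real obstacle: the content is entirely packaged by the preceding lemma, and the only care needed is to observe that the recursion in (ii) lets one recover the full group sequence from the single datum $g_1$ and the label sequence.
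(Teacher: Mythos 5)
Your proposal is correct and follows essentially the same route as the paper: both read the membership criterion off the preceding lemma and observe that the recursion $g_{n+1}=g_n\ell_G(e_n)$ recovers the full group sequence from $g_1$ and the label sequence, so that $\varphi^+_G$ is a bijection and hence a homeomorphism. Your write-up is merely more explicit about surjectivity and the continuity of both directions, which the paper leaves implicit.
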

\begin{proof}
By the preceding lemma, 
a sequence
$(g_n,e_n) \in E_{n-1,n}^G, n \in \N$
defines an element 
of $\Lambda_{{\frak L}^{G,\ell}}^+$
if and only if
$t(e_n) = s(e_{n+1}), g_{n+1} = g_n\ell_G(e_n)$ for all $n \in \N$.
The condition
$g_{n+1} = g_n\ell_G(e_n), n \in \N$
implies 
$g_{n} = g_1\ell(\lambda(e_1))\cdots\ell(\lambda(e_{n-1}))$
so that 
the sequence
$
(g_1,(\lambda(e_n))_{n \in \N}) \in G\times \Lambda_{\frak L}^+
$
determines 
$(g_n,\lambda(e_n))_{n \in \N} \in  \Lambda_{{\frak L}^G}^+.
$
Hence
$\varphi^+_G:
 \Lambda_{{\frak L}^{G,\ell}}^+ \rightarrow G\times \Lambda_{\frak L}^+$
defines a homeomorphism.
\end{proof}

We define $G$-actions 
$\rho^{\Lambda_{G,\ell}^+}$
on $\Lambda_{{\frak L}^{G,\ell}}^+$
and
$\rho^{G\times \Lambda^+}$
on $G\times \Lambda_{\frak L}^+$
by setting
\begin{equation*}
\rho^{\Lambda_{G,\ell}^+}_g((a_n,x_n)_{n \in \N}) =(ga_n,x_n)_{n \in \N},
\qquad 
\rho^{G\times\Lambda^+}_g((a,(x_n)_{n \in \N})) =(ga,(x_n)_{n \in \N}).
\end{equation*}
Then it is direct to see that 
the $G$-actions commute with the homeomorphism
$\varphi^+_G:
 \Lambda_{{\frak L}^{G,\ell}}^+ \rightarrow G\times \Lambda_{\frak L}^+$.
Let
$\sigma^+_{{\frak L}^{G,\ell}}((g_n,x_n)_{n\in \N})
=(g_{n+1},x_{n+1})_{n\in \N}
$
and
$\sigma^+_{{\frak L}}((x_n)_{n\in \N})=(x_{n+1})_{n\in \N}
$
be the one-sided shifts on $\Lambda_{{\frak L}^{G,\ell}}^+$ 
and $\Lambda_{\frak L}^+$
respectively.
We define a map 
$\tau_\ell^+:\Lambda_{\frak L}^+ \rightarrow G$
by
$\tau_\ell^+(x) = \ell(x_1)$ 
for 
$x = (x_n)_{n \in \N}\in \Lambda_{\frak L}^+.$  
Then we have
\begin{proposition}
$\varphi_G^+ \circ \sigma^+_{{\frak L}^{G,\ell}}\circ
(\varphi_G^+)^{-1}(g,x) = (g\tau_\ell^+(x),\sigma^+_{{\frak L}}(x))
$
for $(g, x) \in G \times \Lambda_{\frak L}^+$.
\end{proposition}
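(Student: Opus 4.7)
The plan is to unwind the three maps $\varphi_G^+$, $\sigma^+_{{\frak L}^{G,\ell}}$, and $(\varphi_G^+)^{-1}$ directly from their definitions and the cocycle relation $g_{n+1}=g_n\ell_G(e_n)$ established in the preceding lemma. This is essentially a bookkeeping check; the work lies in correctly describing the preimage under $\varphi_G^+$.

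First I would fix $(g,x) \in G\times\Lambda_{\frak L}^+$ with $x = (x_n)_{n\in\N}$, and write $x_n = \lambda(e_n)$ for admissible edges $e_n \in E_{n-1,n}$ with $t(e_n) = s(e_{n+1})$. By the preceding lemma, the unique preimage $(\varphi_G^+)^{-1}(g,x)$ is the sequence $(g_n, x_n)_{n\in\N} \in \Lambda_{{\frak L}^{G,\ell}}^+$ determined by $g_1 = g$ together with the recursion $g_{n+1} = g_n\ell_G(e_n) = g_n\ell(x_n)$; explicitly, $g_{n} = g\,\ell(x_1)\cdots\ell(x_{n-1})$ for $n\ge 2$.

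Next I would apply the shift $\sigma^+_{{\frak L}^{G,\ell}}$, which sends $(g_n,x_n)_{n\in\N}$ to $(g_{n+1},x_{n+1})_{n\in\N}$, and then apply $\varphi_G^+$ to this image, producing $(g_2,(x_{n+1})_{n\in\N})$. Using the recursion at $n=1$ gives $g_2 = g_1\ell(x_1) = g\,\ell(x_1) = g\,\tau_\ell^+(x)$, by the very definition of $\tau_\ell^+$. Since $(x_{n+1})_{n\in\N} = \sigma^+_{\frak L}(x)$, we obtain the claimed equality
\begin{equation*}
\varphi_G^+ \circ \sigma^+_{{\frak L}^{G,\ell}}\circ(\varphi_G^+)^{-1}(g,x)
=(g\,\tau_\ell^+(x),\sigma^+_{\frak L}(x)).
\end{equation*}

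There is no genuine obstacle here; the only thing to be careful about is that the coordinate identified by $\varphi_G^+$ after the shift is $g_2$ (not $g_1$), and that the recursion relating $g_2$ to $g_1$ is precisely what produces the cocycle factor $\tau_\ell^+(x) = \ell(x_1)$. Everything else is a direct consequence of the preceding lemma and the definitions of the maps involved.
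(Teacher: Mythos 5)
Your proof is correct and follows essentially the same route as the paper: both unwind $(\varphi_G^+)^{-1}(g,x)$ via the recursion $g_{n+1}=g_n\ell(x_n)$ with $g_1=g$, apply the shift, and identify the first group coordinate of the image as $g_2=g\ell(x_1)=g\tau_\ell^+(x)$.
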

\begin{proof}
For $g \in G, x = (x_n)_{n \in \N}\in \Lambda_{\frak L}^+$,
we set 
$(g_n, x_n)_{n \in \N} = (\varphi_G^+)^{-1}(g,x)$,
so that 
$$
g_2 = g\ell(x_1),\quad
g_3 = g\ell(x_1)\ell(x_2),\quad
\cdots, \quad
g_n = g\ell(x_1)\cdots \ell(x_{n-1}),
$$  
and hence
$
g_2 = g\tau_\ell^+(x).
$
It then follows that 
\begin{equation*}
\varphi_G^+ \circ \sigma^+_{{\frak L}^{G,\ell}}\circ
(\varphi_G^+)^{-1}(g,x)
=   \varphi_G^+ ((g_{n+1}, x_{n+1})_{n \in \N})   
=   (g_2, x_{n+1})_{n \in \N}   
=    (g\tau_\ell^+(x),\sigma^+_{{\frak L}}(x)).
\end{equation*}
\end{proof}
We next consider the two-sided subshifts 
$\Lambda_{\frak L}$ and $\Lambda_{{\frak L}^{G,\ell}}$ 
presented by the $\lambda$-graph systems
${\frak L}$ and 
${\frak L}^{G,\ell}$, respectively.
They are realized from $\Lambda_{\frak L}^+$ 
and
$\Lambda_{{\frak L}^{G,\ell}}^+$ in the following way.
\begin{align*}
\Lambda_{\frak L}
& = \{ (x_n)_{n \in \Z} \in \Sigma^\Z
\mid 
(x_{i+n})_{n \in \Z} \in \Lambda_{\frak L}^+
\text{ for all } i \in \Z\},\\
\intertext{and}
\Lambda_{{\frak L}^{G,\ell}}
& = \{ (g_n,x_n)_{n \in \Z} \in (G\times\Sigma)^\Z
\mid 
(g_{i+n},x_{i+n})_{n \in \Z} \in \Lambda_{{\frak L}^{G,\ell}}^+
\text{ for all } i \in \Z \},
\end{align*}
respectively.
Then for $g \in G$,
the action
$\rho_g:\Lambda_{{\frak L}^{G,\ell}}\rightarrow \Lambda_{{\frak L}^{G,\ell}}$
defined by
$\rho_g((g_n,x_n)_{n \in \Z}) = (g\cdot g_n,x_n)_{n \in \Z}$
satisfies
$\rho_g \circ \sigma_{{\frak L}^{G,\ell}} =\sigma_{{\frak L}^{G,\ell}} \circ\rho_g$
for $g \in G$.
\begin{lemma}
For $g_n \in G, x_n \in \Sigma, n\in \Z$,
the sequence
$(g_n, x_n)_{n \in \Z} \in G\times \Sigma, n \in \Z$
gives rise to an element of
$\Lambda_{{\frak L}^{G,\ell}}$
if and only if
the following two conditions hold:
\item
(1) for $i \in \Z$, there exists a sequence
$e_{n+i}^i \in E_{n-1,n}, n \in \N$ such that 
$t(e_{n+i}^i) = s(e_{n+1+i}^i)$
and
$
x_{n+i} = \lambda(e_{n+i}^i)$
for all $n \in \N$.
\item
(2)
$ g_{n+1} = g_n\ell(x_n)$ for all $n \in \Z$.
\end{lemma}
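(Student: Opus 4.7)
The plan is to reduce the two-sided membership question directly to the one-sided characterization given in the preceding lemma, using the definition of $\Lambda_{{\frak L}^{G,\ell}}$ as the set of sequences all of whose right-shifts lie in $\Lambda_{{\frak L}^{G,\ell}}^+$. Both implications are essentially unpacking definitions; the only real content is noticing that condition (2), stated purely in terms of the labels $x_n$ via $\ell:\Sigma\to G$, already encodes the cocycle relation $g_{n+1}=g_n\ell_G(e_n)$ for edges of \emph{any} compatible path, because $\ell_G=\ell\circ\lambda$ factors through the labeling map.

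For the forward direction, assume $(g_n,x_n)_{n\in\Z}\in\Lambda_{{\frak L}^{G,\ell}}$. By definition, for every $i\in\Z$ the one-sided sequence $(g_{i+n},x_{i+n})_{n\in\N}$ lies in $\Lambda_{{\frak L}^{G,\ell}}^+$. Applying the preceding lemma to this one-sided sequence yields edges $e^i_{n+i}\in E_{n-1,n}$ (for $n\in\N$) satisfying $\lambda(e^i_{n+i})=x_{n+i}$, $t(e^i_{n+i})=s(e^i_{n+1+i})$, and $g_{n+1+i}=g_{n+i}\,\ell_G(e^i_{n+i})$. The first two give condition (1). For the last, writing $m=n+i$ and using $\ell_G(e^i_{n+i})=\ell(\lambda(e^i_{n+i}))=\ell(x_m)$, we get $g_{m+1}=g_m\,\ell(x_m)$ for every $m\ge i$. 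Since $i\in\Z$ was arbitrary, condition (2) holds for all $m\in\Z$.

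For the reverse direction, assume (1) and (2) and fix $i\in\Z$. Condition (1) supplies edges $e^i_{n+i}\in E_{n-1,n}$ with $\lambda(e^i_{n+i})=x_{n+i}$ and $t(e^i_{n+i})=s(e^i_{n+1+i})$, so these edges form a one-sided labeled path in ${\frak L}$ starting at level $0$. Combining condition (2) at $m=n+i$ with $\ell_G(e^i_{n+i})=\ell(x_{n+i})$ gives $g_{n+1+i}=g_{n+i}\,\ell_G(e^i_{n+i})$ for all $n\in\N$. All hypotheses of the preceding lemma are now satisfied, so $(g_{n+i},x_{n+i})_{n\in\N}\in\Lambda_{{\frak L}^{G,\ell}}^+$. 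Since this holds for every $i\in\Z$, the bi-infinite sequence $(g_n,x_n)_{n\in\Z}$ belongs to $\Lambda_{{\frak L}^{G,\ell}}$.

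There is no substantive obstacle; the argument is mechanical once one observes that although different shifts $i$ produce \emph{different} underlying edges $e^i_{n+i}$ (living at different levels of the Bratteli diagram), the $G$-coordinate constraints they impose collapse to the single condition (2), because the map $e\mapsto\ell_G(e)$ depends only on the label $\lambda(e)$. The mild notational burden is keeping the superscript $i$ (indexing the shift) separate from the subscript $n+i$ (indexing the position in the two-sided sequence), but no edge-identification between the different $i$-paths is needed.
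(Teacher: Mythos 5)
Your proof is correct and follows exactly the route the paper intends: the paper states this lemma without proof, treating it as immediate from the definition of $\Lambda_{{\frak L}^{G,\ell}}$ via its right-shifts together with the preceding one-sided lemma, which is precisely the reduction you carry out. Your observation that the cocycle condition $g_{n+1}=g_n\ell_G(e_{n}^i)$ collapses to condition (2) because $\ell_G=\ell\circ\lambda$ depends only on the label is the right (and only) point of substance.
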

We define 
$\tau_\ell:\Lambda_{\frak L}
\rightarrow G$
by setting
$\tau_\ell((x_n)_{n\in \Z}) = \ell(x_0)$.
We then see the following lemma.
\begin{lemma}
The map
$
\varphi_G: \Lambda_{{\frak L}^{G,\ell}}\rightarrow G\times \Lambda_{\frak L}
$
defined by
$
\varphi_G((g_n,x_n)_{n \in \Z})
=(g_0,(x_n)_{n \in \Z})
$
for 
$(g_n,x_n)_{n \in \Z}
\in \Lambda_{{\frak L}^{G,\ell}}
$
yields a homeomorphism
between
$
\Lambda_{{\frak L}^{G,\ell}}
$
and
$G\times \Lambda_{\frak L}
$
such that 
\begin{equation*}
\varphi_G\circ\sigma_{{\frak L}^{G,\ell}}\circ (\varphi_G)^{-1}(g_0,x)
= (g_0\tau_\ell(x),\sigma_{\Lambda_{\frak L}})
\quad
\text{ for }
g_0 \in G, x \in \Lambda_{\frak L}.
\end{equation*}
\end{lemma}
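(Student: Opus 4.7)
The plan is to prove the lemma by combining the preceding characterization of admissible sequences in $\Lambda_{{\frak L}^{G,\ell}}$ (given in the immediately preceding lemma) with a direct verification of continuity and of the shift intertwining identity. The argument essentially lifts the one-sided statement (Proposition just proved for $\varphi_G^+$) to the two-sided setting.

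First I would check that $\varphi_G$ is well defined: if $(g_n,x_n)_{n\in\Z}\in\Lambda_{{\frak L}^{G,\ell}}$, then by the preceding lemma every right tail $(x_{i+n})_{n\in\Z}$ lies in $\Lambda_{\frak L}^+$, so $(x_n)_{n\in\Z}\in\Lambda_{\frak L}$, and $g_0\in G$ poses no issue. Next I would verify bijectivity. For injectivity, I use condition (2) of the preceding lemma: if $(g_n,x_n),(g_n',x_n')$ have the same image, then $x_n=x_n'$ and $g_0=g_0'$; the relation $g_{n+1}=g_n\ell(x_n)$, iterated forward for $n\ge 0$ and solved backward (using invertibility in $G$) for $n<0$, forces $g_n=g_n'$ for every $n\in\Z$. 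For surjectivity, given $(g_0,(x_n)_{n\in\Z})\in G\times\Lambda_{\frak L}$, I define $g_n$ for $n\in\Z$ by
\begin{equation*}
g_n=
\begin{cases}
g_0\,\ell(x_0)\ell(x_1)\cdots\ell(x_{n-1}), & n\ge 1,\\
g_0, & n=0,\\
g_0\,\ell(x_{-1})^{-1}\ell(x_{-2})^{-1}\cdots\ell(x_n)^{-1}, & n\le -1.
\end{cases}
\end{equation*}
By construction $g_{n+1}=g_n\ell(x_n)$ for all $n\in\Z$, which is condition (2) of the preceding lemma. Condition (1) of that lemma is exactly membership of $(x_n)_{n\in\Z}$ in $\Lambda_{\frak L}$, hence $(g_n,x_n)_{n\in\Z}\in\Lambda_{{\frak L}^{G,\ell}}$ and maps to $(g_0,(x_n)_{n\in\Z})$.

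The topological part is painless: $\varphi_G$ is continuous because $(g_n,x_n)\mapsto g_0$ and $(g_n,x_n)\mapsto(x_n)$ are both coordinate projections, and $\Lambda_{{\frak L}^{G,\ell}}$ is compact while $G\times\Lambda_{\frak L}$ is Hausdorff, so a continuous bijection is automatically a homeomorphism. Finally, the intertwining identity is a direct one-line calculation: for $(g_n,x_n)_{n\in\Z}\in\Lambda_{{\frak L}^{G,\ell}}$,
\begin{equation*}
\varphi_G\circ\sigma_{{\frak L}^{G,\ell}}((g_n,x_n)_{n\in\Z})
=\varphi_G((g_{n+1},x_{n+1})_{n\in\Z})
=(g_1,(x_{n+1})_{n\in\Z})
=(g_0\ell(x_0),\sigma_{\frak L}((x_n)_{n\in\Z})),
\end{equation*}
and since $\tau_\ell((x_n)_{n\in\Z})=\ell(x_0)$, the right-hand side is $(g_0\tau_\ell(x),\sigma_{\frak L}(x))$, which is exactly the claimed formula.

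There is no real obstacle in this proof; the only point to be mindful of is that the backward indices $n<0$ require using inverses in $G$, which is where finiteness of $G$ (or rather, just its group structure) is implicitly used to reconstruct $g_n$ from $g_0$ and the $x_n$'s. Everything else is a direct application of the preceding lemma's characterization of $\Lambda_{{\frak L}^{G,\ell}}$, plus the standard compact-Hausdorff argument for promoting a continuous bijection to a homeomorphism.
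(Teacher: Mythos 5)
Your proof is correct and follows essentially the same route as the paper's: reconstruct the group coordinates $g_n$ from $g_0$ and the labels via $g_{n+1}=g_n\ell(x_n)$ (forward and backward), and verify the intertwining identity by the same one-line computation. You merely fill in the surjectivity and compact--Hausdorff details that the paper leaves implicit.
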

\begin{proof}
We first show the injectivity
of 
$
\varphi_G: \Lambda_{{\frak L}^{G,\ell}}\rightarrow G\times \Lambda_{\frak L}.
$
For 
$
(g_n,x_n)_{n \in \Z}
\in \Lambda_{{\frak L}^G},
$
the equalities
$g_n \ell(x_n) = g_{n+1}, n \in \Z$
hold, so that we see
\begin{align*}
g_{n+1} 
&= g_n \ell(x_n)
 = g_0 \ell(x_0)\ell(x_1)\cdots\ell(x_n), \\
g_{-n-1} 
&= g_{-n} \ell(x_{-n})^{-1}
 = g_0 (\ell(x_{-n-1})\ell(x_{-n})\cdots\ell(x_{-1}))^{-1}.
\end{align*}
Hence
$g_n, g_{-n}, n=1,2,\dots $ are determined by
$g_0, x_n, n=0,1,2,\dots$.
Therefore we know that 
$
\varphi_G: \Lambda_{{\frak L}^{G,\ell}}\rightarrow G\times \Lambda_{\frak L}
$
is injective and hence a homeomorphism.

For $g_0 \in G, x = (x_n)_{n\in \N} \in \Lambda_{\frak L},$
put
$(g_n,x_n)_{n \in \Z} 
=\varphi_G^{-1}(g_0, (x_n)_{n \in \Z})$
so that 
$g_n \ell(x_n) = g_{n+1}, n \in \Z$.
It then follows that 
\begin{align*}
\varphi_G\circ\sigma_{{\frak L}^{G,\ell}}\circ\varphi_G^{-1}(g_0,x)
& = \varphi_G((g_{n+1},x_{n+1})_{n \in \Z})\\
& = (g_{1},(x_{n+1})_{n \in \Z})\\
& = (g_{0}\ell(x_0),(\sigma_{\frak L}(x_n))_{n \in \Z}) \\
& = (g_{0}\tau_\ell(x),\sigma_{\frak L}(x))
\end{align*}
so that we have 
$
\varphi_G\circ\sigma_{{\frak L}^{G,\ell}}\circ\varphi_G^{-1}(g_0,x)
 = (g_{0}\tau_\ell(x),\sigma_{\frak L}(x)).
$
\end{proof}
Since the above homeomorphism
$
\varphi_G: \Lambda_{{\frak L}^{G,\ell}}\rightarrow G\times \Lambda_{\frak L}
$
commutes with their $G$-actions,
we see the following result.
\begin{theorem}\label{thm:G-conj}
The subshift 
$ 
(\Lambda_{{\frak L}^{G,\ell}}, \sigma_{{\frak L}^{G,\ell}})
$ 
presented by
the $G$-extension ${\frak L}^{G,\ell}$ 
of $\lambda$-graph system ${\frak L}$ 
by a map $\ell:\Sigma\rightarrow G$
 is $G$-conjugate to the 
skew product 
$(G\ltimes \Lambda_{\frak L}, \tau_\ell\ltimes \sigma_{\frak L})$.
Hence the subshift presented by a $G$-$\lambda$-graph system is a $G$-subshift.
\end{theorem}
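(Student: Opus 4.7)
The plan is to observe that essentially all the work has been carried out in the preceding lemmas, and the theorem is the final packaging. The previous lemma already produced a homeomorphism
$$
\varphi_G : \Lambda_{{\frak L}^{G,\ell}} \longrightarrow G \times \Lambda_{\frak L},
\qquad
\varphi_G((g_n,x_n)_{n\in\Z}) = (g_0, (x_n)_{n\in\Z}),
$$
and established the intertwining relation
$\varphi_G \circ \sigma_{{\frak L}^{G,\ell}} \circ \varphi_G^{-1}(g_0,x) = (g_0 \tau_\ell(x), \sigma_{\frak L}(x))$, which is precisely the statement that $\varphi_G$ is a topological conjugacy from the shift on $\Lambda_{{\frak L}^{G,\ell}}$ to the skew product shift $\tau_\ell \ltimes \sigma_{\frak L}$ on $G \ltimes \Lambda_{\frak L}$. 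So the only thing left to verify is that $\varphi_G$ intertwines the two $G$-actions.

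First I would write out the direct check. Recall that the $G$-action on $\Lambda_{{\frak L}^{G,\ell}}$ is $\rho_g((g_n,x_n)_{n\in\Z}) = (g g_n, x_n)_{n\in\Z}$, while the $G$-action on $G \ltimes \Lambda_{\frak L}$ is $\rho_g^{G\times\Lambda}(a,x) = (ga,x)$. Then
$$
\varphi_G(\rho_g((g_n,x_n)_{n\in\Z})) = \varphi_G((gg_n,x_n)_{n\in\Z}) = (gg_0, (x_n)_{n\in\Z}) = \rho_g^{G\times\Lambda}(\varphi_G((g_n,x_n)_{n\in\Z})),
$$
for every $g \in G$. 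Combined with the shift-intertwining relation already proved, this shows $\varphi_G$ is a $G$-conjugacy, giving the first assertion of the theorem.

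For the closing sentence (that the subshift presented by any $G$-$\lambda$-graph system is a $G$-subshift), I would invoke Theorem \ref{thm:extlambda}: any $G$-$\lambda$-graph system with a free $G$-action and the structural data $(\eta_\circ, r_\circ, \ell_\circ)$ is isomorphic, as a $G$-$\lambda$-graph system, to the $G$-extension ${\frak L}_\circ^{G,\ell_\circ}$ of a $\lambda$-graph system ${\frak L}_\circ$. Since isomorphism of $\lambda$-graph systems induces conjugacy of the presented subshifts compatible with the $G$-action, applying the first part of the theorem to ${\frak L}_\circ$ produces the desired $G$-conjugacy with a skew product, which is in particular a $G$-subshift.

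I do not expect any serious obstacle: the only nontrivial content was in the preceding lemma (verifying that $\varphi_G$ is well-defined and that the cocycle identity $g_{n+1} = g_n \ell(x_n)$ determines the entire sequence $(g_n)$ from $g_0$), and in the characterization Theorem \ref{thm:extlambda}. The present theorem is a one-line verification of $G$-equivariance plus an appeal to these earlier results.
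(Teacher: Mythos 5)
Your proof of the main assertion is correct and is essentially the paper's own argument: the preceding lemma supplies the homeomorphism $\varphi_G$ intertwining $\sigma_{{\frak L}^{G,\ell}}$ with the skew product shift, and the paper likewise concludes by simply observing that $\varphi_G$ commutes with the two $G$-actions (you spell out this one-line check, which the paper only asserts). One caveat on the closing sentence: your appeal to Theorem \ref{thm:extlambda} only covers $G$-$\lambda$-graph systems that carry a \emph{free} action together with the data $(\eta_\circ, r_\circ, \ell_\circ)$, i.e.\ those that are actually $G$-extensions, whereas a $G$-$\lambda$-graph system in the paper's sense is merely a $\lambda$-graph system with a $G$-action; for the general statement the direct observation that $\rho^\Sigma$ induces a shift-commuting action on the label sequences of the presented subshift is what is needed (and is all the paper implicitly uses).
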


\section{$G$-Strong shift equivalence}
In this section,
we first introduce two kinds of notions of $G$-strong shift equivalences 
between \smss over $\Zp G$.
Both of them are generalizations of 
$G$-strong shift equivalence for finite symbolic matrices over $\Zp G$.
They are defined by  analogous ways to the strong shift equivalences between 
symbolic matrices.

Let $\psi:\Sigma_1\rightarrow \Sigma_2$
be a specification from an alphabet $\Sigma_1$ to an alphabet $\Sigma_2$.
Suppose that maps 
$\ell_1:\Sigma_1\rightarrow G$
and
$\ell_2:\Sigma_2\rightarrow G$
are given.
If the equality
$\ell_1 = \ell_2\circ \psi$
holds,
the specification $\psi$ is said to be
compatible to the maps $\ell_1$ and $\ell_2$.
In what follows, we fix a finite group $G$.
Let $(\M,I)$ and
$(\M^{\prime},I')$ be \smss over
$\Sigma$
and
$\Sigma'$, respectively.
Suppose  that 
maps 
$\ell:\Sigma\rightarrow G$
and
$
\ell':\Sigma'\rightarrow G$
are given.
By relabeling 
$\alpha \in \Sigma$
and
$\alpha' \in \Sigma'$
by 
$\ell(\alpha) \in G$
and
$\ell'(\alpha')\in G,$
respectively,
we may regard  the \smss 
$(\M,I)$
and
$(\M^{\prime},I')$ 
as those over 
$\Z_+G$, respectively.
The relabeled \smss
are denoted by
$(\M^{\ell},I)$
and
$(\M^{\prime \ell'},I'),$
respectively.
Each entry 
$
\M^{\ell}_{l,l+1}(i,j)
$
of the matrix
$
\M^{\ell}_{l,l+1}
$
is
\begin{equation*}
\M^{\ell}_{l,l+1}(i,j) =
\ell(\alpha_1) + \cdots +\ell(\alpha_n) 
\quad
\text{ if }
\quad
\M_{l,l+1}(i,j) =
\alpha_1 + \cdots + \alpha_n. 
\end{equation*}
Similarly the matrix 
$
\M^{\prime \ell'}_{l,l+1}
$
is defined.
Let
$m(l)$
and
$m^\prime(l)$
be the sequences for which 
$\M_{l,l+1}, I_{l,l+1}$ are $m(l)\times m(l+1)$ matrices and
$\M'_{l,l+1}, I'_{l,l+1}$ are $m'(l)\times m'(l+1)$ matrices,
 respectively.
\begin{definition}\label{defn:proGSSE}
Symbolic matrix systems
 $(\M^{\ell},I)$ and $(\M^{\prime \ell'},I')$
 over $\Z_+G$
 are said to be {\it properly $G$-strong shift equivalent in 1-step},
written as 
$
(\M^{\ell},I)\underset{G,1-pr}{\approx} (\M^{\prime \ell'},I'),
$
 if
the following two conditions hold:
\begin{enumerate}
\renewcommand{\labelenumi}{(\arabic{enumi})}
\item 
there exist sequences of matrices ${\P}_l, {\Q}_l, X_l, X'_l$ 
 for each $l \in \Zp$ and specifications 
$\varphi:\Sigma\rightarrow C\cdot D,$
$\phi:\Sigma'\rightarrow D\cdot C$
satisfying \eqref{eq:MPQ},\eqref{eq:IXX}, \eqref{eq:XP}, that is 
$
(\M,I)\underset{1-pr}{\approx} (\M',I'),
$
\item 
there exist maps $\ell_C:C\rightarrow G, \ell_D:D\rightarrow G$
such that 
$\varphi:\Sigma\rightarrow C\cdot D$
is compatible to
$\ell$ and $\ell_{CD}$,
where $\ell_{CD}$ is defined by $\ell_{CD}(cd) = \ell_C(c) \ell_D(d), c\in C, d \in D,$
 and similarly 
 $\phi:\Sigma'\rightarrow D\cdot C$
is compatible to
$\ell'$ and $\ell_{DC}$,
and the equalities
\begin{equation}
\M^{\ell}_{l,l+1} 
= {\P}_{2l}^{\ell_C}{\Q}_{2l+1}^{\ell_D}, \qquad
 \M^{\prime \ell'}_{l,l+1} 
= {\Q}_{2l}^{\ell_D}{\P}_{2l+1}^{\ell_C} \label{eq:GMM'}
\end{equation}
hold for $l \in \Zp$.
\end{enumerate} 
 \end{definition}
Symbolic matrix systems 
 $(\M^{\ell},I)$ and $(\M^{\prime \ell'},I')$ 
 over $\Z_+G$
 are said to be {\it properly $G$-strong shift equivalent in N-step},
written as
$
(\M^{\ell},I) \underset{G, N-pr}{\approx} (\M^{\prime \ell'},I'),
$
 if
there exists an $N$-string of properly $G$-strong shift equivalences
in 1-step connecting
between 
$
(\M^{\ell},I)
$
and
$(\M^{\prime \ell'},I').
$
We simply call it a {\it properly $G$-strong shift equivalence}.
 It is straightforward to see that 
properly $G$-strong shift equivalence
is an equivalence relation in 
the set of \smss over $\Z_+G$.

  We will next introduce the notion of $G$-strong shift equivalence 
between two \smss 
that is simpler 
and weaker than properly $G$-strong shift equivalence.
\begin{definition}
Symbolic matrix systems
$(\M^{\ell},I)$
and
$(\M^{\prime \ell'},I')$ over $\Z_+G$ 
 are said to be $G$-{\it strong shift equivalent in 1-step\/},
written as
$
(\M^{\ell},I) \underset{G,1-st}{\approx}(\M^{\prime \ell'},I'),
$
if
 the following two conditions hold:
\begin{enumerate}
\renewcommand{\labelenumi}{(\arabic{enumi})}
\item 
there exist sequences of matrices ${\H}_l, {\K}_l$ 
 for each $l \in \Zp$ and specifications 
$\varphi:\Sigma\rightarrow C\cdot D,$
$\phi:\Sigma'\rightarrow D\cdot C$
satisfying \eqref{eq:IMHK},\eqref{eq:HIIH}, that is 
$
(\M,I)\underset{1-st}{\approx} (\M',I'),
$
\item 
there exist maps $\ell_C:C\rightarrow G, \ell_D:D\rightarrow G$
such that 
$\varphi:\Sigma\rightarrow C\cdot D$
is compatible to
$\ell$ and $\ell_{CD}$, 
 and similarly 
 $\phi:\Sigma'\rightarrow D\cdot C$
is compatible to
$\ell'$ and $\ell_{DC}$,
and the equalities
\begin{equation}
I_{l-1,l}{\M^{\ell}}_{l,l+1} 
= {\H}_l^{\ell_C}{\K}_{l+1}^{\ell_D},  \qquad
I^{\prime}_{l-1,l}{\M}^{\prime \ell'}_{l,l+1} 
= {\K}_l^{\ell_D}{\H}_{l+1}^{\ell_C} \label{eq:GIMHK}
\end{equation}
hold for $l \in \Zp$.
\end{enumerate} 
\end{definition}
Symbolic matrix systems 
 $(\M^{\ell},I)$ and $(\M^{\prime \ell'},I')$ over $\Z_+G$
 are said to be $G$-{\it strong shift equivalent in N-step},
written as
$
(\M^{\ell},I) \underset{G,N-st}{\approx} (\M^{\prime \ell'},I'),
$
 if
there exists an $N$-string of $G$-strong shift equivalences
in 1-step connecting
between 
$
(\M^{\ell},I)
$
and
$(\M^{\prime \ell'},I').
$
 We simply call it a $G$-{\it strong shift equivalence\/}.
Similarly to the case of properly $G$-strong shift equivalence,
$G$-strong shift equivalence in \smss over $\Z_+G$
is  an equivalence relation.
\begin{proposition} 
Properly $G$-strong shift equivalence in 1-step implies
$G$-strong shift equivalence in 1-step.
\end{proposition}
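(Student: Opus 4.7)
The plan is to reduce the proposition to the corresponding non-$G$ statement already observed in the paper (namely that setting $\H_l = X_{2l-2}\P_{2l-1}$ and $\K_l = X'_{2l-2}\Q_{2l-1}$ yields a 1-step strong shift equivalence from a given 1-step properly strong shift equivalence), and then to upgrade that construction by checking that the $G$-labels $\ell_C,\ell_D$ are compatible with the resulting $\H_l,\K_l$. Concretely, I would start from the data of a properly $G$-strong shift equivalence in $1$-step: matrices $\P_l,\Q_l,X_l,X'_l$ and specifications $\varphi:\Sigma\to C\cdot D$, $\phi:\Sigma'\to D\cdot C$ satisfying \eqref{eq:MPQ}, \eqref{eq:IXX}, \eqref{eq:XP}, together with maps $\ell_C:C\to G$, $\ell_D:D\to G$ making $\varphi$ and $\phi$ compatible to $\ell,\ell_{CD}$ and $\ell',\ell_{DC}$ respectively, and satisfying \eqref{eq:GMM'}.

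Next, I would define
\begin{equation*}
\H_l := X_{2l-2}\P_{2l-1}, \qquad \K_l := X'_{2l-2}\Q_{2l-1}
\end{equation*}
as matrices over $C$ and $D$ respectively. The verification of the non-$G$ equations \eqref{eq:IMHK} and \eqref{eq:HIIH} is already carried out in the paper: using $I_{l-1,l} = X_{2l-2}X_{2l-1}$, $\M_{l,l+1}=\P_{2l}\Q_{2l+1}$ and the intertwining relations $\P_{2l-1}X'_{2l} = X_{2l-1}\P_{2l}$ and $\Q_{2l-1}X_{2l} = X'_{2l-1}\Q_{2l}$, one computes
\begin{equation*}
\H_l \K_{l+1}
= X_{2l-2}\P_{2l-1}X'_{2l}\Q_{2l+1}
= X_{2l-2}X_{2l-1}\P_{2l}\Q_{2l+1}
\overset{\varphi}{\simeq} I_{l-1,l}\M_{l,l+1},
\end{equation*}
and dually for $\K_l\H_{l+1}$, while \eqref{eq:HIIH} follows from further applications of \eqref{eq:XP} together with \eqref{eq:IXX}. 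So the underlying uncolored half of Condition~(1) of the definition of $G$-strong shift equivalence in $1$-step is automatic, and Condition~(2) already supplies the specifications $\varphi,\phi$ and the maps $\ell_C,\ell_D$.

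The genuinely new step is to verify the $G$-labeled equalities \eqref{eq:GIMHK} for $\H^{\ell_C}_l$ and $\K^{\ell_D}_{l+1}$. The key observation is that the $\{0,1\}$-matrices $X_l$ and $X'_l$ have no $G$-content, so the relabeling commutes with left- or right-multiplication by them: for instance $(X_{2l-2}\P_{2l-1})^{\ell_C} = X_{2l-2}\P^{\ell_C}_{2l-1}$, and the identity $\P_{2l-1}X'_{2l} = X_{2l-1}\P_{2l}$ transfers to $\P^{\ell_C}_{2l-1}X'_{2l} = X_{2l-1}\P^{\ell_C}_{2l}$. Combining these with the $G$-labeled factorization $\M^{\ell}_{l,l+1} = \P^{\ell_C}_{2l}\Q^{\ell_D}_{2l+1}$ from \eqref{eq:GMM'} yields
\begin{equation*}
\H^{\ell_C}_l \K^{\ell_D}_{l+1}
= X_{2l-2}\P^{\ell_C}_{2l-1}X'_{2l}\Q^{\ell_D}_{2l+1}
= X_{2l-2}X_{2l-1}\P^{\ell_C}_{2l}\Q^{\ell_D}_{2l+1}
= I_{l-1,l}\M^{\ell}_{l,l+1},
\end{equation*}
and symmetrically $\K^{\ell_D}_l\H^{\ell_C}_{l+1} = I'_{l-1,l}\M'^{\ell'}_{l,l+1}$, establishing \eqref{eq:GIMHK}.

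The only place where any care is required is bookkeeping: one must ensure that the $\{0,1\}$-matrices indeed pass freely through the relabeling (which is straightforward because relabeling acts entrywise on the letters appearing in a formal sum, and $X_l$, $X'_l$ merely add and select entries), and that the compatibility of $\varphi,\phi$ with the group-label maps is inherited verbatim from the properly $G$-strong shift equivalence data. Since both of these are formal consequences of the definitions, this is the main (and essentially only) obstacle, and it presents no real difficulty beyond careful tracking of labels.
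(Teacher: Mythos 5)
Your proof is correct and takes essentially the same route as the paper, whose entire argument consists of writing down $\H_l = X_{2l-1}\P_{2l-1}$, $\K_l = X'_{2l-1}\Q_{2l-1}$ and asserting the conclusion; you supply the verification of \eqref{eq:IMHK}, \eqref{eq:HIIH} and \eqref{eq:GIMHK} that the paper leaves implicit, and your observation that relabeling commutes with multiplication by the $\{0,1\}$-matrices is exactly the point that makes the $G$-labeled equalities follow. Note that your indices $\H_l = X_{2l-2}\P_{2l-1}$, $\K_l = X'_{2l-2}\Q_{2l-1}$ are the dimensionally correct ones (an $m(l-1)\times m'(l)$ matrix over $C$ forces the left factor to be $X_{2l-2}$, of size $n(2l-2)\times n(2l-1)$, rather than $X_{2l-1}$), so you have in effect silently corrected an index slip in the paper's stated formula.
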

\begin{proof}
Let
$\P_l,\Q_l, X_l$ and $ X'_l$ be the matrices 
in Definition \ref{defn:proGSSE} of properly $G$-strong shift equivalence in 1-step
between 
$(\M^{\ell},I) $
and
$(\M',I').$
We set
$$
\H_l = X_{2l-1}\P_{2l-1},\qquad
\K_l = X'_{2l-1}\Q_{2l-1}
\quad
\text{ for }
l \in \N.
$$
They give rise to a $G$-strong shift equivalence in 1-step between
$(\M^{\ell},I)$ and $(\M^{\prime \ell'},I')$.
\end{proof}
Conversely we know the following proposition.
Since its proof is similar to that of \cite[Theorem 4.3]{ETDS2003},
we omit it. 
\begin{proposition}[{cf. \cite{ETDS2003}}]
Let
$(\M^{\ell},I)$ and 
$(\M^{\prime \ell'},I')$ be the symbolic matrix systems 
over $\Zp G$.
Suppose that their respect $\lambda$-graph systems ${\frak L}$ and ${\frak L}^{\prime}$
for $(\M,I)$ and $(\M', I')$  
are left-resolving and predecessor-separated.
The following are equivalent:
\begin{enumerate}
\renewcommand{\labelenumi}{(\roman{enumi})}
\item 
$(\M^{\ell},I)$ and $(\M^{\prime \ell'},I')$ are properly $G$-strong shift equivalent in $l$-step.     
\item 
$(\M^{\ell},I)$ and $(\M^{\prime \ell'},I')$ are $G$-strong shift equivalent in $l$-step.     
\end{enumerate}
\end{proposition}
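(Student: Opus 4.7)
The plan is to adapt the non-equivariant proof of \cite[Theorem 4.3]{ETDS2003} by verifying that each step respects the $G$-labeling. The implication (i)$\Longrightarrow$(ii) was established in the preceding proposition, so only (ii)$\Longrightarrow$(i) requires argument.

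First I would reduce to the $1$-step case. An $l$-step $G$-strong shift equivalence is a chain of $1$-step $G$-strong shift equivalences whose intermediate \smss need not arise from left-resolving, predecessor-separated $\lambda$-graph systems. Following the refinement procedure in \cite{ETDS2003}, I would insert additional $1$-step equivalences so that each intermediate $\lambda$-graph system becomes left-resolving and predecessor-separated. Since the refinement operates only by splitting vertices according to their predecessor sets and never modifies edge labels, the alphabets $C,D$ and the label maps $\ell_C,\ell_D$ that come with the $1$-step data descend unchanged, and each inserted step remains a $G$-SSE.

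Second, under the standing assumption that both $\lambda$-graph systems are left-resolving and predecessor-separated, I would invoke the non-equivariant \cite[Theorem 4.3]{ETDS2003} applied to $(\M,I)$ and $(\M',I')$. This produces matrices $\P_l,\Q_l$ (with entries in $C$ and $D$ respectively) and $\{0,1\}$-matrices $X_l,X'_l$ satisfying \eqref{eq:MPQ}, \eqref{eq:IXX}, \eqref{eq:XP}, obtained by factoring the given $\H_l,\K_l$ through the $\iota$-structure. Crucially, no new symbols are introduced into $C$ or $D$, so the existing maps $\ell_C,\ell_D$ equip $\P_l$ and $\Q_l$ with $G$-labels automatically. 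I would then verify the $G$-labeled equations \eqref{eq:GMM'}: because $\varphi:\Sigma\rightarrow C\cdot D$ is compatible with $\ell$ and $\ell_{CD}$, applying $\ell_C$ entrywise to $\P_{2l}$ and $\ell_D$ entrywise to $\Q_{2l+1}$ converts $\M_{l,l+1}\overset{\varphi}{\simeq}\P_{2l}\Q_{2l+1}$ into $\M^\ell_{l,l+1}=\P_{2l}^{\ell_C}\Q_{2l+1}^{\ell_D}$; an analogous computation with $\phi$ and $\ell_{DC}$ handles $\M^{\prime\ell'}_{l,l+1}$, while the identities \eqref{eq:IXX} and \eqref{eq:XP} are label-free and pass through unchanged.

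The main obstacle is inherited from the non-equivariant setting: the delicate combinatorics of the vertex refinement and of the factorization of $\H_l,\K_l$ through the $\iota$-structure are exactly what the proof of \cite[Theorem 4.3]{ETDS2003} is devoted to. The new content needed for the equivariant statement is genuinely slight — every operation performed there acts on vertices rather than on edge labels, so the alphabet-level data $(C,\ell_C)$ and $(D,\ell_D)$ propagate through the construction unchanged, and the upgrade from $G$-strong shift equivalence to properly $G$-strong shift equivalence follows at each level of the chain.
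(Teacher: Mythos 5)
Your proposal is correct and coincides with the paper's intent: the paper omits the proof entirely, stating only that it is ``similar to that of \cite[Theorem 4.3]{ETDS2003},'' and your argument is exactly that adaptation, with the (right) observation that the constructions there act on vertices and the $\iota$-structure while the alphabets $C,D$ and hence the maps $\ell_C,\ell_D$ and the compatibility conditions pass through unchanged. Nothing further is needed.
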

Hence the two notions,  
properly $G$-strong shift equivalence
 and
$G$-strong shift equivalence, 
coincide with
each other in the canonical \smss
with a map $\ell:\Sigma\rightarrow G$.

Let
${\frak L}$
and
${\frak L}'$
be $\lambda$-graph systems over 
$\Sigma$
and
$\Sigma'$ with maps
$
\ell:\Sigma\rightarrow G
$
and
$ 
\ell':\Sigma'\rightarrow G,
$
respectively.
Denote by
$(\M^{\ell},I)$
and
$(\M^{\prime \ell'},I')$ 
their respect \smss 
 over $\Zp G$.
Define the skewing functions
$\tau_\ell: \Lambda_{{\frak L}}\rightarrow G
$ and 
$
 \tau_{\ell'}: \Lambda_{{\frak L}'}\rightarrow G
$
 by
\begin{align*}
\tau_\ell((x_n)_{n\in \Z}) 
& = \ell(x_0), \qquad (x_n)_{n\in \Z}\in \Lambda_{{\frak L}},\\
\tau_{\ell'}((y_n)_{n\in \Z}) 
& = {\ell'}(y_0), \qquad (y_n)_{n\in \Z}\in \Lambda_{{\frak L}'}.
\end{align*}
Suppose that
$
(\M^{\ell},I)
$ 
and 
$(\M^{\prime \ell'},I')
$
are properly $G$-strong shift equivalence in $1$-step.
Since
$
(\M,I)
$ 
and 
$(\M',I')
$
are properly strong shift equivalent,
there exists a topological conjugacy 
$\varPhi: \Lambda_{\frak L} \rightarrow \Lambda_{{\frak L}'}$
between their presenting subshifts
coming from the half-shift:
$(c_n d_n)_{n \in \Z} \rightarrow  
(d_n c_{n+1})_{n \in \Z}.
$
It is called the forward bipartite conjugacy
in \cite{Nasu}(cf. \cite{DocMath1999}). 
\begin{lemma}\label{lem:Gst1step}
Suppose that
$
(\M^{\ell},I)
$ 
and 
$(\M^{\prime \ell'},I')
$
are properly $G$-strong shift equivalence in $1$-step.
Then
there exists 
a map
$b:\Sigma\rightarrow G$
such that 
\begin{equation}
\tau_\ell(x)
=\gamma_b(x)\tau_{\ell'}(\varPhi(x))\gamma_b(\sigma_{\Lambda_{\frak L}}(x))^{-1},
\qquad
x \in \Lambda_{\frak L}, \label{eq:cocycle}
\end{equation}
where
$\varPhi: \Lambda_{{\frak L}}\rightarrow \Lambda_{{\frak L}'}$
is the forward bipartite conjugacy
and
$\gamma_b:\Lambda_{{\frak L}}\rightarrow G$
is defined by
$
\gamma_b((x_n)_{n\in \Z}) 
= b(x_0).
$
\end{lemma}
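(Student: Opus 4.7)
The plan is to extract the map $b$ directly from the 1-step proper $G$-strong shift equivalence data and then verify the cocycle identity by a one-line calculation. The key observation is that the hypothesis supplies compatibility maps $\ell_C:C\to G$ and $\ell_D:D\to G$ with
\begin{equation*}
\ell(\alpha)=\ell_C(c)\ell_D(d) \text{ whenever } \varphi(\alpha)=cd,\qquad
\ell'(\beta)=\ell_D(d)\ell_C(c) \text{ whenever } \phi(\beta)=dc,
\end{equation*}
and $\varphi,\phi$ are bijective relabelings, so each $\alpha\in\Sigma$ determines a unique factor $c\in C$. This suggests defining $b:\Sigma\to G$ by $b(\alpha):=\ell_C(c)$ with $\varphi(\alpha)=cd$, and this will turn out to be the correct choice.

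First I would unpack the forward bipartite conjugacy. For $x=(x_n)_{n\in\Z}\in\Lambda_{\frak L}$, write $\varphi(x_n)=c_nd_n\in C\cdot D$. Then $\varPhi(x)=(y_n)_{n\in\Z}$ with $y_n=\phi^{-1}(d_nc_{n+1})$, so that
\begin{equation*}
\tau_{\ell'}(\varPhi(x))=\ell'(y_0)=\ell_{DC}(d_0c_1)=\ell_D(d_0)\ell_C(c_1),
\end{equation*}
using the compatibility of $\phi$ with $\ell'$ and $\ell_{DC}$. Similarly, $\tau_\ell(x)=\ell(x_0)=\ell_{CD}(c_0d_0)=\ell_C(c_0)\ell_D(d_0)$.

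Next I would verify \eqref{eq:cocycle}. With $b(\alpha):=\ell_C(c)$ for $\varphi(\alpha)=cd$, one has $\gamma_b(x)=b(x_0)=\ell_C(c_0)$ and $\gamma_b(\sigma_{\frak L}(x))=b(x_1)=\ell_C(c_1)$. Substituting into the right-hand side of \eqref{eq:cocycle},
\begin{equation*}
\gamma_b(x)\,\tau_{\ell'}(\varPhi(x))\,\gamma_b(\sigma_{\frak L}(x))^{-1}
=\ell_C(c_0)\,\ell_D(d_0)\ell_C(c_1)\,\ell_C(c_1)^{-1}
=\ell_C(c_0)\ell_D(d_0)=\tau_\ell(x),
\end{equation*}
as required.

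The proof is essentially a direct unfolding of definitions, and there is no genuine obstacle: the only point deserving care is making sure that $b$ is well defined (which follows because $\varphi$ is a bijective specification, so $c$ is a function of $\alpha$ alone) and that the compatibility maps $\ell_C,\ell_D$ supplied by Definition \ref{defn:proGSSE}(2) are being applied in the correct order on each side of \eqref{eq:cocycle}. The asymmetric placement of $\ell_C(c_0)$ and $\ell_C(c_1)$ in the LHS versus the RHS of the cocycle identity is exactly what is absorbed by the coboundary term $\gamma_b(x)\gamma_b(\sigma_{\frak L}(x))^{-1}$, which explains why the choice $b=\ell_C\circ\pi_C\circ\varphi$ (with $\pi_C:C\cdot D\to C$ the projection) is the canonical one.
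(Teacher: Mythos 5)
Your proposal is correct and follows essentially the same route as the paper's own proof: the same choice $b(x_0)=\ell_C(c_0)$ extracted from the specification $\varphi(x_0)=c_0d_0$, the same unfolding of the forward bipartite conjugacy, and the same one-line cancellation using the compatibility of $\varphi$ with $\ell_{CD}$ and of $\phi$ with $\ell_{DC}$. The only difference is cosmetic (you verify the identity solved for $\tau_\ell$ while the paper solves for $\tau_{\ell'}\circ\varPhi$), and your explicit remark on the well-definedness of $b$ is a small point the paper leaves implicit.
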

\begin{proof}
Assume that 
\smss $(\M^{\ell},I)$
and
$(\M^{\prime \ell'},I')$  
are properly $G$-strong shift equivalent in 1-step.
Since
$
(\M,I) \underset{1-pr}{\approx} (\M',I'),
$
their presenting  subshifts
$
\Lambda_{\frak L}
$
and
$
\Lambda_{{\frak L}'}
$
are bipartitely related
as in the proof of 
\cite[Theorem 4.1]{DocMath1999}.
 Through the specifications
$\varphi:\Sigma\rightarrow C\cdot D$
 and
$\phi:\Sigma'\rightarrow D\cdot C$,
for any
$(x_n)_{n \in \Z}\in \Lambda_{{\frak L}}$
we may write
$\varphi(x_n) = c_n d_n, n \in \Z$
for some 
$c_n\in C,  d_n \in D, n \in \Z$. 
By putting
$y_n = \phi^{-1}(d_n c_{n+1}), n \in \Z$,
we know that 
$(y_n)_{n \in \Z}$ belongs to 
$\Lambda_{{\frak L}'}$.
The  forward bipartite conjugacy
$\varPhi:\Lambda_{{\frak L}}\rightarrow \Lambda_{{\frak L}'}$
is defined by
$\varPhi((x_n)_{n \in \Z}) = (y_n)_{n \in \Z}.$
We set
$b:\Sigma \rightarrow G$
by
$b(x_0) = \ell_C(c_0)$.
Since
$\tau_\ell((x_n)_{n\in \Z}) 
 = \ell_{CD}(\varphi(x_0)) 
 = \ell_{CD}(c_0 d_0)
$
and
$
\tau_{\ell'}((y_n)_{n\in \Z}) 
= \ell'(y_0) 
= \ell_{DC}(d_0 c_1),
$
by setting 
$\gamma_b((x_n)_{n\in \Z}) 
=b(x_0) \in G$,
 the following equalities hold:
\begin{align*}
\tau_{\ell'}(\varPhi((x_n)_{n\in \Z}))
& =\ell_{DC}(d_0 c_1)\\
& = \ell_C(c_0)^{-1} \ell_C(c_0) \ell_D(d_0)\ell_C(c_1) \\
& = \ell_C(c_0)^{-1} \ell_{CD}(c_0 d_0)\ell_C(c_1) \\
& = \gamma_b((x_n)_{n\in \Z})^{-1} 
\tau_\ell((x_n)_{n\in \Z})
    \gamma_b((x_{n+1})_{n\in \Z}). 
\end{align*}
This shows the equality
\eqref{eq:cocycle}.
\end{proof} 
\begin{proposition}\label{prop:Gskew}
Suppose that
$
(\M^{\ell},I) 
$
and
$
(\M^{\prime \ell'},I')
$
are
properly $G$-strong shift equivalent.
Then
there exist a topological conjugacy
$\varPhi: \Lambda_{{\frak L}}\rightarrow \Lambda_{{\frak L}'}$
and a continuous function
$\gamma:\Lambda_{{\frak L}}\rightarrow G$
such that 
\begin{equation}
\tau_\ell(x)
=\gamma(x)\tau_{\ell'}(\varPhi(x))\gamma(\sigma_{\Lambda_{{\frak L}}}(x))^{-1},
\qquad
x \in \Lambda_{\frak L}. \label{eq:tauellx}
\end{equation}
\end{proposition}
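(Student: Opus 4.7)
The plan is to reduce to Lemma \ref{lem:Gst1step} by induction on the number $N$ of 1-step equivalences in the properly $G$-strong shift equivalence connecting $(\M^{\ell},I)$ to $(\M^{\prime \ell'},I')$. The base case $N=1$ is precisely Lemma \ref{lem:Gst1step}, which provides the forward bipartite conjugacy $\varPhi$ and a continuous $\gamma$ (in fact of the special form $\gamma_b$) satisfying \eqref{eq:tauellx}.

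For the inductive step, I factor the equivalence as
\[
(\M^{\ell},I) \underset{G,1-pr}{\approx} (\M^{(1)\ell^{(1)}}, I^{(1)}) \underset{G,(N-1)-pr}{\approx} (\M^{\prime \ell'},I'),
\]
with intermediate $\lambda$-graph system ${\frak L}^{(1)}$, and apply Lemma \ref{lem:Gst1step} to the first arrow and the inductive hypothesis to the remainder. This yields topological conjugacies $\varPhi_1: \Lambda_{\frak L}\rightarrow \Lambda_{{\frak L}^{(1)}}$ and $\varPhi_2: \Lambda_{{\frak L}^{(1)}}\rightarrow \Lambda_{{\frak L}'}$, together with continuous maps $\gamma_1:\Lambda_{\frak L}\rightarrow G$ and $\gamma_2:\Lambda_{{\frak L}^{(1)}}\rightarrow G$ satisfying
\begin{align*}
\tau_\ell(x) &= \gamma_1(x)\, \tau_{\ell^{(1)}}(\varPhi_1(x))\, \gamma_1(\sigma_{{\frak L}}(x))^{-1}, \\
\tau_{\ell^{(1)}}(y) &= \gamma_2(y)\, \tau_{\ell'}(\varPhi_2(y))\, \gamma_2(\sigma_{{\frak L}^{(1)}}(y))^{-1}.
\end{align*}

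I would then set $\varPhi := \varPhi_2 \circ \varPhi_1$ and define $\gamma(x) := \gamma_1(x)\cdot \gamma_2(\varPhi_1(x))$. Substituting the second identity at $y = \varPhi_1(x)$ into the first and using the intertwining $\sigma_{{\frak L}^{(1)}}\circ \varPhi_1 = \varPhi_1 \circ \sigma_{\frak L}$ (which holds because $\varPhi_1$ is a topological conjugacy), the target cocycle identity \eqref{eq:tauellx} follows by a direct rearrangement of the four factors. Continuity of $\gamma$ is clear from continuity of $\gamma_1,\gamma_2,\varPhi_1$, and $\varPhi$ is a topological conjugacy as a composition of such.

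The content of the argument is essentially all carried by Lemma \ref{lem:Gst1step}; what remains is bookkeeping. The only subtlety to be careful about is that $G$ is not assumed abelian, so in the cocycle identity the order of factors matters and the correct recipe for composing coboundaries is $\gamma(x) = \gamma_1(x)\,\gamma_2(\varPhi_1(x))$ rather than a pointwise product on $\Lambda_{\frak L}$ independently of $\varPhi_1$. Verifying that this specific formula gives a genuine coboundary whose value matches the composed expression is the only step where one must write out the group-theoretic cancellation explicitly.
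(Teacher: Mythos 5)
Your proof is correct and follows essentially the same route as the paper: the paper also reduces to Lemma \ref{lem:Gst1step} applied to each of the $N$ one-step equivalences, and simply writes out the composed conjugacy $\varPhi = \varPhi_{N-1}\circ\cdots\circ\varPhi_1$ and the composed coboundary $\gamma(x)=\gamma_1(x)\gamma_2(\varPhi_1(x))\cdots\gamma_N((\varPhi_{N-1}\circ\cdots\circ\varPhi_1)(x))$ explicitly rather than by induction. Your two-step composition formula $\gamma(x)=\gamma_1(x)\,\gamma_2(\varPhi_1(x))$, iterated, is exactly the paper's expression, and your cancellation check is the same computation.
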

\begin{proof}
Assume that
$
(\M^{\ell},I) 
$
and
$
(\M^{\prime \ell'},I')
$
are properly $G$-strong shift equivalent
in $N$-step.
There exists a finite sequence of \smss
$(\M_{n}, I_{n})$
over $\Sigma_{n}$
and maps
$\ell_{n}:\Sigma_{n}\rightarrow G$
for $n=1,\dots,N$
such that 
\begin{gather*}
(\M,I) = (\M_{1}, I_{1}),\quad
\ell = \ell_{1},\qquad
(\M',I') = (\M_{N}, I_{N}),\quad
\ell'= \ell_{N},\\
(\M^{\ell_n}_{n},I_{n}) \underset{G,1-pr}{\approx} 
( \M^{\ell_{n+1}}_{n+1}, I_{n+1})
\quad \text{ for } n=1,\dots N-1.
\end{gather*}
Let
${\frak L}_{n}$ be the associated $\lambda$-graph system
to
$(\M_{n},I_{n}), n=1,\dots,N$.
By Lemma \ref{lem:Gst1step},
there exist a finite sequence
of  bipartite conjugacies
$\varPhi_n: \Lambda_{{\frak L}_n}\rightarrow \Lambda_{{\frak L}_{n+1}}$
and continuous maps
$\gamma_n:\Lambda_{{\frak L}_n}\rightarrow G$
for $n=1,\dots,N$
such that 
\begin{equation}
\tau_{\ell_n}(x)
=\gamma_n(x)\tau_{\ell_{n+1}}(\varPhi_n(x))\gamma_n(\sigma_{\Lambda_{{\frak L}_n}}(x))^{-1},
\qquad
x \in \Lambda_{{\frak L}_n}.
\end{equation}
Put
\begin{align*}
\gamma(x) & = 
\gamma_1(x)\gamma_2(\varPhi_1(x))\gamma_3(\varPhi_2(\varPhi_1(x)))\cdots
\gamma_N((\varPhi_{N-1}\circ \cdots \circ \varPhi_1)(x)), \\
\varPhi(x) & =
(\varPhi_{N-1}\circ\cdots \circ \varPhi_1)(x), \qquad x \in \Lambda_{{\frak L}_1}.
\end{align*}
We have
\begin{equation*}
\tau_{\ell_1}(x)
=\gamma(x)\tau_{\ell_N}(\varPhi(x))
\gamma (\sigma_{\Lambda_{{\frak L}_1}}(x))^{-1},
\qquad
x \in \Lambda_{{\frak L}_1}.
\end{equation*}
Since $\tau_{\ell} =\tau_{\ell_1}$, $\tau_{\ell'} = \tau_{\ell_N}$, 
the equality \eqref{eq:tauellx} holds.
\end{proof}
To study the converse implication to the above proposition,
we provide some lemmas.
\begin{lemma}
Let $\Lambda$ be 
a subshift over $\Sigma$
and
 maps $\ell:\Sigma\rightarrow G, \ell':\Sigma\rightarrow G$
be given.
Suppose that a continuous map
$\gamma:\Lambda\rightarrow G$
satisfies  
\begin{equation}
\tau_\ell(x) 
 = \gamma(x) \tau_{\ell'}(x) \gamma(\sigma(x))^{-1}, \qquad x \in \Lambda.
\label{eq:taugamma}
\end{equation}
Take $x=(x_n)_{n\in \Z}, y=(y_n)_{n\in \Z}\in \Lambda$. 
\begin{enumerate}
\renewcommand{\labelenumi}{(\roman{enumi})}
\item 
If
$x_n = y_n$ for all $n=0,1,2,\dots$,
we have
$\gamma(x) = \gamma(y)$. 
\item
If
$x_n = y_n$ for all $n=-1,-2,-3,\dots$,
we have
$\gamma(x) = \gamma(y)$. 
\end{enumerate}
\end{lemma}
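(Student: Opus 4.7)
The plan is to iterate the cocycle equation \eqref{eq:taugamma} (in both directions) to express $\gamma(\sigma^k(x))$ in terms of $\gamma(x)$ and an explicit product of $\ell$- and $\ell'$-values determined by the coordinates $x_0,\dots,x_{k-1}$ (respectively $x_{-1},\dots,x_{-k}$ for negative iterates), and then use continuity of $\gamma$ together with the finiteness (hence discreteness) of $G$ to conclude that the $G$-valued map $\gamma$ is locally constant. The two ingredients combine to force $\gamma(x)=\gamma(y)$ as soon as $x,y$ agree on a one-sided tail.

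First I would rewrite \eqref{eq:taugamma} as
\begin{equation*}
\gamma(\sigma(z)) = \ell(z_0)^{-1}\gamma(z)\ell'(z_0), \qquad z=(z_n)_{n\in\Z}\in\Lambda,
\end{equation*}
and iterate to obtain
\begin{equation*}
\gamma(\sigma^k(x)) = \bigl(\ell(x_0)\cdots\ell(x_{k-1})\bigr)^{-1}\gamma(x)\bigl(\ell'(x_0)\cdots\ell'(x_{k-1})\bigr)
\end{equation*}
for $k\in\N$, and symmetrically
\begin{equation*}
\gamma(\sigma^{-k}(x)) = \ell(x_{-k})\cdots\ell(x_{-1})\,\gamma(x)\,\ell'(x_{-1})^{-1}\cdots\ell'(x_{-k})^{-1}.
\end{equation*}
These are immediate inductions on $k$ from the recurrence, so I would not belabor them.

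For part (i), if $x_n=y_n$ for all $n\ge 0$, then $\sigma^k(x)$ and $\sigma^k(y)$ agree on all coordinates $n\ge -k$, so $d(\sigma^k(x),\sigma^k(y))\to 0$. Because $\gamma:\Lambda\to G$ is continuous with $G$ finite (discrete), $\gamma$ is locally constant; hence for all sufficiently large $k$ one has $\gamma(\sigma^k(x))=\gamma(\sigma^k(y))$. Applying the forward iteration formula to both $x$ and $y$ and using $x_j=y_j$ for $0\le j\le k-1$, the $\ell$- and $\ell'$-products on the two sides are identical, so the equality collapses to $\gamma(x)=\gamma(y)$. Part (ii) is completely analogous, using the backward iteration formula: if $x_n=y_n$ for $n\le -1$, then $\sigma^{-k}(x)$ and $\sigma^{-k}(y)$ agree on coordinates $n\le k-1$, local constancy of $\gamma$ gives $\gamma(\sigma^{-k}(x))=\gamma(\sigma^{-k}(y))$ for large $k$, and the two iteration formulas again share the same $\ell,\ell'$-product, yielding $\gamma(x)=\gamma(y)$.

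There is no real obstacle; the only point that deserves care is the direction-bookkeeping in the iteration (which coordinates $x_j$ appear and in what order) and invoking the fact that a continuous map from a compact metric space to a finite set is locally constant, so that convergence $\sigma^{\pm k}(x)\to\sigma^{\pm k}(y)$ in the metric $d$ can be upgraded to eventual equality of $\gamma$-values. Everything else is a pure group-theoretic cancellation.
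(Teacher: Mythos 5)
Your proof is correct and follows essentially the same route as the paper: iterate the cocycle identity to relate $\gamma(x)$ and $\gamma(\sigma^{\pm k}(x))$ by conjugating with the products $\ell(x_0)\cdots\ell(x_{k-1})$ and $\ell'(x_0)\cdots\ell'(x_{k-1})$ (resp.\ the negatively indexed ones), then use that a continuous $G$-valued map on $\Lambda$ is locally constant to get $\gamma(\sigma^{\pm k}(x))=\gamma(\sigma^{\pm k}(y))$ for large $k$ and cancel. The only cosmetic difference is that you solve for $\gamma(\sigma^{\pm k}(x))$ while the paper solves for $\gamma(x)$; the bookkeeping and the cancellation are identical.
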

\begin{proof}
We set 
$\tau(x) = \tau_\ell(x), \tau'(x) = \tau_{\ell'}(x)$.

(i)
We define
$
\tau^n(x) = \tau(x)\tau(\sigma(x))\cdots\tau(\sigma^{n-1}(x))
$
and
$
{\tau'}^n(x) 
$
is similarly defined.
By \eqref{eq:taugamma},
we have
\begin{equation}
\gamma(x) 
 = 
 \tau^n(x) 
 \gamma(\sigma^n(x)) 
 {\tau'}^n(x)^{-1}, \qquad x \in \Lambda.
\label{eq:gamman}
\end{equation}
Since both
$\tau(x)$ and 
$\tau'(x)$
are determined at the $0$th coordinate $x_0$
of $x$,
the condition
$x_n = y_n, n \in \Zp$
ensures us
$\tau^n(x) = \tau^n(y)$
for all $n \in \N$
and similarly
${\tau'}^n(x) = {\tau'}^n(y)$
for all $n \in \Zp$.
Now $\gamma:\Lambda\rightarrow G$ is continuous,
$\gamma(x)$ 
is determined by finite coordinates of $x$
so that 
one may take $L \in \N$ such that 
$\gamma(\sigma^L(x)) = \gamma(\sigma^L(y))$,
because $x_n = y_n$ for all $n\in \Zp$. 
Hence
$
\gamma(x) = \tau^L(x) \gamma(\sigma^L(x)){\tau'}^L(x)^{-1}.
$
Since
$\tau^L(x) = \tau^L(y)$
and
${\tau'}^L(x) = {\tau'}^L(y)$, 
we get
$\gamma(x) = \gamma(y)$. 

(ii)
By \eqref{eq:taugamma}, we have
\begin{equation}
\gamma(x) 
 = \tau(\sigma^{-1}(x))^{-1}\gamma(\sigma^{-1}(x))
   \tau'(\sigma^{-1}(x))^{-1}, \qquad x \in \Lambda.
\label{eq:gammainverse}
\end{equation}
We define
$
\tau^{-n}(x) 
= \tau(\sigma^{-n}(x))\cdots\tau(\sigma^{-2}(x))\tau(\sigma^{-1}(x))
$
and
$
{\tau'}^{-n}(x)
$
is similarly defined,
so that we have
\begin{equation}
\gamma(x) 
 = 
 \tau^{-n}(x)^{-1} 
 \gamma(\sigma^{-n}(x)) 
 {\tau'}^{-n}(x), \qquad x \in \Lambda.
\label{eq:gammaninverse}
\end{equation}
Since
$x_n = y_n$ for all $n=-1, -2,\dots$,
we have
$\tau^{-n}(x) = \tau^{-n}(y)$
for all $n \in \N$
and 
${\tau'}^{-n}(x) = {\tau'}^{-n}(y)$
for all $n \in \Zp$.
Similarly to the above discussion,
one may take $L \in \N$ such that 
$\gamma(\sigma^{-L}(x)) = \gamma(\sigma^{-L}(y))$
and hence
$
\gamma(x) = \tau^{-L}(x)^{-1} \gamma(\sigma^{-L}(x)){\tau'}^L(x).
$
Since
$\tau^{-L}(x) = \tau^{-L}(y)$
and
${\tau'}^{-L}(x) = {\tau'}^{-L}(y)$, 
we get
$\gamma(x) = \gamma(y)$. 
\end{proof}
For $x = (x_n)_{n \in \Z} \in \Lambda$,
we set
$x^+ =(x_n)_{n \in \Zp}\in \Lambda^+.$
For $K\le L$,
let us denote by 
$x_{[K,L]}$ 
the finite word $(x_K,x_{K+1},\dots, x_L)\in B_{L-K+1}(\Lambda)$
for $x =(x_n)_{n \in \Z}$.
\begin{lemma}
Let $\Lambda$ be 
a subshift over $\Sigma$
and maps $\ell:\Sigma\rightarrow G, \ell':\Sigma\rightarrow G$
be given.
Suppose that a continuous map
$\gamma:\Lambda\rightarrow G$
satisfies the equality \eqref{eq:taugamma}.  
Then there exists $L \in \N$ such thst 
for 
$x, y \in \Lambda$
and
 $l \ge L$,
if $x^+ \sim_{l} y^+$ $l$-past equivalent,
then $\gamma(x) = \gamma(y)$.
\end{lemma}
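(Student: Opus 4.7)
The plan is to combine conclusions (i) and (ii) of the previous lemma with the continuity of $\gamma$ and the finiteness of $G$ to show that $\gamma(x)$ is already determined by a finite window of the strict past of $x$, and then --- using the past-equivalence hypothesis --- to transport this information between $x$ and $y$ by constructing a hybrid point $z \in \Lambda$ whose past matches that of $x$ and whose future matches that of $y$.

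First I would observe that since $G$ is finite and $\gamma : \Lambda \rightarrow G$ is continuous, $\gamma$ is locally constant; by compactness of $\Lambda$ this upgrades to a uniform bound: there exists $M \in \N$ such that $\gamma(x) = \gamma(y)$ whenever $x, y \in \Lambda$ agree on the coordinate block $[-M, M]$. Conclusion (ii) of the previous lemma lets me vary coordinates of index $\ge 0$ freely (while staying in $\Lambda$) without changing $\gamma$, and combining these two facts I would conclude that $\gamma(x) = \gamma(y)$ as soon as $x_n = y_n$ for $-M \le n \le -1$. In other words, $\gamma$ is already determined by the length-$M$ strict past block.

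I would then take $L := M$. Given any $x, y \in \Lambda$ with $x^+ \sim_l y^+$ for some $l \ge L$, the word $(x_{-l}, \ldots, x_{-1})$ lies in $\Gamma_l^-(x^+)$, as witnessed by the tail $(x_n)_{n \ge -l}$ of $x$, and therefore also in $\Gamma_l^-(y^+)$ by the past-equivalence hypothesis. By the definition of $\Gamma_l^-$, this means that $(x_{-l}, \ldots, x_{-1}, y_0, y_1, \ldots) \in \Lambda^+$, and I would invoke the defining property of $\Lambda^+$ to extend this one-sided sequence to a bi-infinite point $z \in \Lambda$ satisfying $z_n = x_n$ for $-l \le n \le -1$ and $z_n = y_n$ for $n \ge 0$.

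Finally I would evaluate $\gamma(z)$ in two ways: conclusion (i) of the previous lemma together with $z_n = y_n$ for $n \ge 0$ gives $\gamma(z) = \gamma(y)$, while the uniform past bound above together with $z_n = x_n$ for $-L \le n \le -1$ gives $\gamma(z) = \gamma(x)$, so $\gamma(x) = \gamma(y)$. The only slightly technical point is the uniformization producing the common $M$; this is routine because each $\gamma^{-1}(g)$ is clopen, so by compactness $\Lambda$ is covered by finitely many cylinder sets on which $\gamma$ is constant, and $M$ may be taken to be the largest coordinate index appearing in those cylinders.
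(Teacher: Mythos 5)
Your overall architecture is the same as the paper's: first show that $\gamma(x)$ is determined by a finite block of the \emph{strict past} $x_{[-L,-1]}$, then use $l$-past equivalence to prepend a common past to $x^+$ and $y^+$ and finish with conclusion (i) of the previous lemma. The construction of the hybrid point $z$ and the final two evaluations of $\gamma(z)$ are fine. The problem is the first step, which is where all the content lies, and your justification of it does not go through.

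You claim that combining the uniform continuity bound (``$\gamma$ is determined by $x_{[-M,M]}$'') with conclusion (ii) (``$\gamma$ is determined by $(x_n)_{n\le -1}$'') yields ``$\gamma$ is determined by $x_{[-M,-1]}$.'' For a general subshift this is a non sequitur: to pass from two such factorizations to the factorization through their common window you must produce, for $x,y$ with $x_{[-M,-1]}=y_{[-M,-1]}$, an intermediate point of $\Lambda$ carrying the entire left-infinite past of $x$ together with a future block matching $y$ (or some chain of such splices). Two left rays that agree on their last $M$ symbols need not admit a common continuation in a subshift that is not of finite type, so ``varying the coordinates of index $\ge 0$ freely while staying in $\Lambda$'' is exactly what you cannot do. The paper avoids any splicing at this stage: it applies the iterated cocycle identity \eqref{eq:gammaninverse} at $n=L_1+1$ (where $[-L_1,L_1]$ is the continuity window), notes that $\gamma(\sigma^{-L_1-1}(x))$ depends only on $\sigma^{-L_1-1}(x)_{[-L_1,L_1]}=x_{[-2L_1-1,-1]}$ while $\tau^{-L_1-1}(x)$ and ${\tau'}^{-L_1-1}(x)$ depend only on $x_{[-L_1-1,-1]}$, and concludes with $L=2L_1+1$; the only auxiliary point used, $\sigma^{-L_1-1}(x)$, is automatically in $\Lambda$. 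Note also that your choice $L:=M$ is too small even after the step is repaired --- the provable window is $[-2M-1,-1]$, not $[-M,-1]$, since $\gamma$ may genuinely depend on coordinates down to $-2M-1$. Replacing your ``combination'' argument by the cocycle computation and taking $L=2M+1$ repairs the proof; the remainder of your argument then stands as written.
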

\begin{proof}
We set 
$\tau(x) = \tau_\ell(x), \tau'(x) = \tau_{\ell'}(x)$.
Since $\gamma:\Lambda \rightarrow G$ is continuous,
$\gamma$ is determined by the window $[-L_1, L_1]$ 
between $-L_1$th coordinate and $L_1$th coordinate.
Hence 
the equality
$\gamma(x) = \gamma(y)$ holds
 for $x,y \in \Lambda$
with $x_{[-L_1,L_1]} = y_{[-L_1,L_1]}$.
Put $L = 2 L_1 +1$.
Suppose that 
$x_{[-L, -1]} = y_{[-L, -1]}$. 
We have
$$
\tau^{-L_1-1}(x) =\tau^{-L_1-1}(y),
\qquad
{\tau'}^{-L_1-1}(x) ={\tau'}^{-L_1-1}(y)
$$
and
$$
\sigma^{-L_1-1}(x)_{[-L_1, L_1]} 
=x_{[-L, -1]} 
=y_{[-L, -1]} 
=\sigma^{-L_1-1}(y)_{[-L_1, L_1]}.
$$
By \eqref{eq:gammaninverse}, we get $\gamma(x) = \gamma(y)$.
For $l \ge L$,
suppose that $x^+ \sim_{l} y^+$ $l$-past equivalent.
One may find a word $\mu \in B_L(\Lambda)$
and
$\tilde{x}, \tilde{y} \in \Lambda$ 
such that
$\tilde{x}_{[-L,-1]} =\mu, \tilde{x}^+ = x^+$
and
$\tilde{y}_{[-L,-1]} =\mu, \tilde{y}^+ = y^+$.
Hence the equality
$\gamma(\tilde{x}) = \gamma(\tilde{y})$ holds.
Since 
$\tilde{x}_n = x_n$ for all $n=0,1,2,\dots $
we have
$\gamma(\tilde{x}) = \gamma(x)$
and similarly
$\gamma(\tilde{y}) = \gamma(y)$,
so that
$\gamma(x) = \gamma(y)$.
\end{proof}
By the above lemma, 
$\gamma: \Lambda\rightarrow G$ 
defines a sequence of maps
$\gamma_l:V^\Lambda_l \rightarrow G$
from the $l$-past equivalence classes of $\Lambda$ to $G$ for 
$l \ge L$.
Let us denote by
$[x^+]_l$ the $l$-past equivalence class of 
$x^+$.
 We may write
 $\gamma(x) = \gamma([x^+]_l)$. 
\begin{lemma}\label{lem:tauGstone}
Let $(\M,I)$ 
be the canonical symbolic matrix system for a subshift $(\Lambda,\sigma)$
over $\Sigma$.
Let maps $\ell:\Sigma\rightarrow G, \ell':\Sigma\rightarrow G$
be given.
If there exists a map
$b:\Sigma\rightarrow G$
satisfying  
\begin{equation}
\tau_\ell(x) 
 = \gamma_b(x) \tau_{\ell'}(x) \gamma_b(\sigma(x))^{-1}, 
 \qquad x \in \Lambda, \label{eq:tauellgammab}
\end{equation}
then their respect \smss 
$(\M^{\ell},I)$ and $(\M^{\ell'},I)$
are properly $G$-strong shift equivalent in $1$-step,
where
$\gamma_b:\Lambda\rightarrow G$ is defined by
$\gamma_b((x_n)_{n\in \Z})= b(x_0)$.
\end{lemma}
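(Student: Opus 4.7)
The plan is to construct an explicit 1-step properly $G$-strong shift equivalence between $(\M^{\ell},I)$ and $(\M^{\ell'},I)$, effectively inverting the construction of Lemma~\ref{lem:Gst1step}. The first ingredient is algebraic. For any admissible pair $\alpha\beta\in B_2(\Lambda)$, applying \eqref{eq:tauellgammab} to any $x\in\Lambda$ with $x_0=\alpha$, $x_1=\beta$ yields $\ell(\alpha)=b(\alpha)\ell'(\alpha)b(\beta)^{-1}$, and solving for $b(\beta)$ gives $b(\beta)=\ell(\alpha)^{-1}b(\alpha)\ell'(\alpha)$, a value depending only on $\alpha$. Set $\kappa(\alpha):=\ell(\alpha)^{-1}b(\alpha)\ell'(\alpha)\in G$; then $\kappa(\alpha)=b(\beta)$ for every admissible successor $\beta$ of $\alpha$ in $\Lambda$.

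I then choose the auxiliary data as follows. Take $C$ to consist of copies $c_g$ indexed by $g\in b(\Sigma)\subseteq G$, and $D=\Sigma$ with copies $d_\alpha$, together with $\ell_C(c_g)=g$ and $\ell_D(d_\alpha)=b(\alpha)^{-1}\ell(\alpha)$. Define specifications
\begin{equation*}
\varphi(\alpha)=c_{b(\alpha)}d_\alpha,\qquad \phi(\alpha)=d_\alpha c_{\kappa(\alpha)}.
\end{equation*}
Compatibility $\ell=\ell_{CD}\circ\varphi$ is immediate, while $\ell_{DC}(\phi(\alpha))=b(\alpha)^{-1}\ell(\alpha)\kappa(\alpha)=\ell'(\alpha)$ by the definition of $\kappa$.

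For the bipartite matrices, take as intermediate index set at each odd level $2l+1$ the edge set $E_{l,l+1}$ of the canonical $\lambda$-graph system ${\frak L}$, so that $n(2l+1)=n'(2l+1)=|E_{l,l+1}|$ while $n(2l)=n'(2l)=m(l)$. For $e\in E_{l,l+1}$ with label $\alpha_e=\lambda(e)$, declare
\begin{align*}
\P_{2l}(i,e) &= c_{b(\alpha_e)}\ \text{if}\ s(e)=v_i^l,
& \Q_{2l+1}(e,j) &= d_{\alpha_e}\ \text{if}\ t(e)=v_j^{l+1},\\
\Q_{2l}(i,e) &= d_{\alpha_e}\ \text{if}\ s(e)=v_i^l,
& \P_{2l+1}(e,j) &= c_{\kappa(\alpha_e)}\ \text{if}\ t(e)=v_j^{l+1},
\end{align*}
and $0$ otherwise. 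A direct entrywise calculation then yields $\M_{l,l+1}\overset{\varphi}{\simeq}\P_{2l}\Q_{2l+1}$ and $\M'_{l,l+1}\overset{\phi}{\simeq}\Q_{2l}\P_{2l+1}$, and the relabelings through $\ell_C,\ell_D$ deliver the $G$-compatibility~\eqref{eq:GMM'}.

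Finally, the $\{0,1\}$-matrices $X_l,X'_l$ factoring $I_{l,l+1}$ through $E_{l,l+1}$ are supplied by the source, terminal, and $\iota$-data of ${\frak L}$; the predecessor-separated and left-resolving hypotheses yield, via the local property \eqref{eq:2.1}--\eqref{eq:2.2}, canonical distinguished incoming edges that make the factorizations $\{0,1\}$-valued. The intertwining identities \eqref{eq:IXX}--\eqref{eq:XP} then reduce to the commutation $I_{l,l+1}\M_{l+1,l+2}=\M_{l,l+1}I_{l+1,l+2}$ built into the symbolic matrix system. The main technical obstacle is coordinating the $X_l,X'_l$ factorizations with the already-fixed $\P,\Q$ on the shared edge-indexed intermediate level so that all of \eqref{eq:IXX}--\eqref{eq:XP} hold simultaneously; the canonical (left-resolving and predecessor-separated) structure of ${\frak L}$ is what makes this simultaneous bookkeeping possible.
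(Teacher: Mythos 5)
Your cocycle algebra is correct: the one--block identity $\ell(\alpha)=b(\alpha)\ell'(\alpha)b(\beta)^{-1}$ for admissible $\alpha\beta$, the resulting map $\kappa$, the choice of $\ell_C,\ell_D$, and the compatibility checks $\ell_{CD}\circ\varphi=\ell$, $\ell_{DC}\circ\phi=\ell'$ all go through, and the entrywise identities $\M_{l,l+1}\overset{\varphi}{\simeq}\P_{2l}\Q_{2l+1}$, $\M_{l,l+1}\overset{\phi}{\simeq}\Q_{2l}\P_{2l+1}$ hold for your edge--indexed $\P,\Q$. The genuine gap is exactly where you flag ``the main technical obstacle'': a properly $G$-strong shift equivalence in $1$-step also requires explicit $\{0,1\}$-matrices $X_l,X'_l$ satisfying \eqref{eq:IXX} \emph{and} the intertwining relations \eqref{eq:XP}, and you never construct them. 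Because you chose the intermediate index set to be $E_{l,l+1}$, the factorization $I_{l,l+1}=X_{2l}X_{2l+1}$ through the edge set is not canonical (the obvious source/terminal incidence matrices multiply to the edge-counting matrix, not to the $\iota$-incidence matrix $I_{l,l+1}$), and \eqref{eq:XP} does not ``reduce to'' the commutation relation $I_{l,l+1}\M_{l+1,l+2}=\M_{l,l+1}I_{l+1,l+2}$: it is a separate constraint coupling your already-fixed $\P,\Q$ to the unchosen $X,X'$, e.g.\ $X_{2l}\P_{2l+1}=\P_{2l}X'_{2l+1}$ forces, entry by entry, a matching between the edges selected by $X_{2l}$ terminating at $v_j^{l+1}$ and those selected by $X'_{2l+1}$ emanating from $v_i^l$, together with the label identity $c_{\kappa(\alpha_e)}$ versus $c_{b(\alpha_{e'})}$. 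Asserting that left-resolving and predecessor-separation make ``the simultaneous bookkeeping possible'' is not a proof; this is the content you would actually have to supply.

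The paper's own proof avoids the edge-indexed intermediate level entirely, and you could too. Since $b$ is a one-block map, the cocycle identity gives the diagonal conjugation $\M^{\ell}_{l,l+1}=D_l\,\M^{\ell'}_{l,l+1}\,D_{l+1}^{-1}$, where $D_l$ is the $m(l)\times m(l)$ diagonal matrix over $G$ whose $(i,i)$ entry is $b(x_0)$ for the class $[x^+]_l=v_i^l$ (well-defined on $l$-past equivalence classes precisely because $b$ is one-block and the system is canonical). One then takes $C=G\Sigma$, $D=G$, $\varphi(\alpha)=b(\alpha)\alpha\cdot b(\beta)^{-1}$, $\phi(\alpha)=b(\alpha)^{-1}\cdot b(\alpha)\alpha$, and sets $\P_{2l}=\P_{2l+1}=D_l\M_{l,l+1}$, $\Q_{2l}=D_l^{-1}$, $\Q_{2l+1}=D_{l+1}^{-1}$, with the $X_k,X'_k$ being identity matrices and the $I_{l,l+1}$ themselves. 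With these choices all of \eqref{eq:MPQ}, \eqref{eq:IXX}, \eqref{eq:XP} and \eqref{eq:GMM'} are immediate because the intermediate levels have the same sizes $m(l)$ as the original ones. I recommend either completing your $X,X'$ construction in full detail or switching to the diagonal argument.
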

\begin{proof}
Let us denote by 
$
{\frak L}^\Lambda 
=(V^\Lambda, E^\Lambda,\lambda^\Lambda,\iota^\Lambda)
$
the canonical $\lambda$-graph system for $\Lambda$.  
As the continuous map
$\gamma_b:\Lambda\rightarrow G$ is determined by the window
$[-L_1, L_1]$ for $L_1 =0$, one may take $L$ in the previous lemma,
as $L = 2\cdot 0 +1 =1$.
Let $v_{i(x)}^l$ denotes the vertex  $[x^+]_l$ in $V_l^\Lambda$
determined by the $l$-past equivalence class of $x^+$.
 Since $\gamma(x) = \gamma_l([x^+]_l)$,  
 the equality \eqref{eq:tauellgammab}
implies that 
\begin{equation}
\ell(x_0) = \gamma([x^+]_l) \ell'(x_0) \gamma([\sigma(x)^+]_{l+1})^{-1}, 
\qquad
x \in \Lambda. \label{eq:ellgamma}
\end{equation}
We set the $m(l)\times m(l)$ diagonal matrix
$D_l=[D_l(i,i)]_{i=1}^{m(l)}$ with its diagonal entries in $G$ 
for which the $(i(x),i(x))$-component is 
$\gamma([x]_l)$.
The definition of 
$D_l(i,i) $ is well-defined by the preceding lemma.
For $x=(x_n)_{n\in \Z} \in \Lambda$,
the right one-sided sequence
$x^+ = (x_0,x_1,\dots ) \in \Lambda^+$
defines
an edge $e_{i,j} \in E_{l,l+1}^\Lambda$
such that
\begin{equation*}
s(e_{ij}) = v_i^l =[x^+]_l, \qquad
t(e_{ij}) = v_j^{l+1} =[\sigma(x)^+]_{l+1}, \qquad
\lambda^\Lambda(e_{ij}) = x_0,
\end{equation*}
because
$x_0 F_j^{l+1} \subset F_i^l$,
where
$F_i^l = [x^+]_l, F_j^{l+1} = [\sigma(x)^+]_{l+1}.
$
The equality \eqref{eq:ellgamma} means 
\begin{equation}
\ell(\lambda(e_{i,j})) 
= D_l(i,i) \ell'(\lambda(e_{i,j})) D_{l+1}(j,j)^{-1} \label{eq:elleij} 
\end{equation}
which is also written
\begin{equation}
\ell(x_0) 
= b(x_0) \ell'(x_0)) b(x_1)^{-1}. \label{eq:ellxzero} 
\end{equation}
The equality \eqref{eq:elleij}
implies 
\begin{equation}
\M^{\ell}_{l,l+1}(i,j)
= D_l(i,i) \M^{\ell'}_{l,l+1}(i,j) D_{l+1}(j,j)^{-1}
\end{equation} 
so that
\begin{equation}
\M^{\ell}_{l,l+1}
= D_l \M^{\ell'}_{l,l+1} D_{l+1}^{-1}. \label{eq:MDMD}
\end{equation} 
Let $C = G\Sigma =\{g \alpha\mid g \in G, \alpha \in \Sigma\}$
and
$D = G$.
Define specifications
$\varphi:\Sigma\rightarrow C\cdot D$
and
$\phi:\Sigma \rightarrow D\cdot C$ 
by setting
$\varphi(x_0) = b(x_0)x_0\cdot b(x_1)^{-1}$
and
$\phi(x_0) = b(x_0)^{-1}\cdot b(x_0) x_0$.
By \eqref{eq:ellxzero},
$b(x_1)^{-1}$ is determined by $x_0$
so that  $\varphi$ is well-defined.
We define symbolic matrices
$\P_k, \Q_k $ by
\begin{align*}
\P_k & = D_l \M_{l,l+1} \quad \text{ if } k = 2l, 2l+1,\\
\Q_k & =
{
\begin{cases}
D_l^{-1} & \text{ if } k = 2l,\\
D_{l+1}^{-1} & \text{ if } k = 2l+1
\end{cases}
}
\end{align*}
and
matrices 
$X_k, X'_k $ by
\begin{align*}
X_k & =
{
\begin{cases}
E_l & \text{ if } k = 2l,\\
I_{l,l+1} & \text{ if } k = 2l+1,
\end{cases}
}\\
X'_k & =
{
\begin{cases}
I_{l,l+1} & \text{ if } k = 2l,\\
E_l & \text{ if } k = 2l+1,
\end{cases}
}
\end{align*}
where $E_l$ denotes the $m(l) \times m(l)$ identity matrix.
The matrices
$\P_k, \Q_k, X_k, X'_k $
satisfy the conditions
\eqref{eq:MPQ}, \eqref{eq:IXX}, \eqref{eq:XP}
for the symbolic matrix system
$(\M,I) = (\M', I')$.
We set 
$\ell_C:C\rightarrow G,$
$\ell_D:D\rightarrow G$
by
$$ 
\ell_C(b(x_0) x_0) =b(x_0) \ell'(x_0), \qquad
\ell_D = \id.
$$
The equality \eqref{eq:ellxzero}
shows that
\begin{align*}
\ell(x_0)& = \ell_C(b(x_0)x_0) \ell_D(b(x_1)^{-1})
          = \ell_{CD}(\varphi(x_0)), \\
\ell'(x_0)& = b(x_0)^{-1}\cdot b(x_0) \ell'(x_0)          
           =\ell_{DC}(\phi(x_0))
\end{align*}
so that 
$\varphi$ is compatible to $\ell$ and $\ell_{CD}$
and
$\phi$ is compatible to $\ell'$ and $\ell_{DC}.$
The equality
\eqref{eq:MDMD}
implies
\begin{equation*}
\M^{\ell}_{l,l+1} 
= {\P}_{2l}^{\ell_C}{\Q}_{2l+1}^{\ell_D}, \qquad
 \M^{\ell'}_{l,l+1} 
= {\Q}_{2l}^{\ell_D}{\P}_{2l+1}^{\ell_C} 
\end{equation*}
hold for $l \in \Zp$.
Therefore the
\smss $(\M^{\ell},I)$ and $ (\M^{\ell'},I)$
are properly $G$-strong shift equivalent in $1$-step.
\end{proof}
In  \cite[Theorem 7.1]{ETDS2003},
it has been shown that a topological conjugacy 
between two subshifts
is decomposed into a finite chain of 
properly strong shift equivalences 
on their canonical symbolic matrix systems.
Each of the properly strong shift equivalences comes from 
one of the four operations,
merged in-splittings,
merged out-splittings,
merged in-amalgamations
and
merged out-amalgamations
on their canonical symbolic matrix systems.
We may show that 
the four operations,
in-splittings,
out-splittings,
in-amalgamations
and
out-amalgamations,
 induce  
properly $G$-strong shift equivalences.
In the following lemma,
we will prove it in the case of 
in-splittings. 
The other three cases are similarly shown.
Let $(\M,I)$ be the \sms for a $\lambda$-graph system ${\frak L}$
over $\Sigma$.
For a partition ${\mathcal{P}}$
of $\Sigma$,
we have 
the in-splitting  $\lambda$-graph system 
${\frak L}_{[{\mathcal{P}}]}$ over
alphabet
$\Sigma_{\mathcal{P}} = {\mathcal{P}}\times \Sigma$
of ${\frak L}$
by ${\mathcal{P}}$
(\cite[Section 5]{ETDS2003}).
Let us denote by
$(\M_{\mathcal{P}}, I_{\mathcal{P}})$
the associated symbolic matrix systems.
\begin{lemma}
Keep the above situation.
Suppose that  a  map $\ell:\Sigma \rightarrow G$
is given.
\begin{enumerate}
\renewcommand{\labelenumi}{(\roman{enumi})}
\item 
There exists a map
$\ell_{\mathcal{P}}:\Sigma_{\mathcal{P}} \rightarrow G$
such that
the \smss 
$(\M^{\ell}, I)$
and
$(\M_{\mathcal{P}}^{\ell_{\mathcal{P}}}, I_{\mathcal{P}})$
are properly $G$-strong shift equivalent.
\item
There exists a map
$b:\Sigma \rightarrow G$ 
such that 
\begin{equation}
\tau_\ell(x)
=\gamma_b(x)
\tau_{\ell_{\mathcal{P}}}(\varPhi(x))\gamma_b(\sigma_{\Lambda_{\frak L}}(x))^{-1},
\qquad
x \in \Lambda_{\frak L} \label{eq:lPcocycle}
\end{equation}
where 
$\varPhi:\Lambda_{\frak L}\rightarrow \Lambda_{{\frak L}_{[\P]}}$
is the forward bipartite conjugacy arising from the in-splitting
of ${\frak L}$.
\end{enumerate}
\end{lemma}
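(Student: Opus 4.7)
The plan is to upgrade the known realisation of the in-splitting as a properly strong shift equivalence in $1$-step to the $G$-graded setting, and then to read off (ii) as an instance of Lemma~\ref{lem:Gst1step}.

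First I will recall from \cite[Section 5]{ETDS2003} that the passage from $(\M,I)$ to $(\M_{\mathcal{P}},I_{\mathcal{P}})$ is realised by a properly strong shift equivalence in $1$-step: one has alphabets $C,D$, specifications $\varphi:\Sigma\to C\cdot D$ and $\phi:\Sigma_{\mathcal{P}}\to D\cdot C$, and matrices satisfying \eqref{eq:MPQ}, \eqref{eq:IXX} and \eqref{eq:XP}. Crucially, in the in-splitting decomposition one of the two factors of $C\cdot D$ (and correspondingly of $D\cdot C$) carries the original symbol from $\Sigma$ while the other carries only the partition datum from $\mathcal{P}$. I then set
\[
\ell_{\mathcal{P}}(P,\alpha)=\ell(\alpha), \qquad (P,\alpha)\in\Sigma_{\mathcal{P}},
\]
and define $\ell_C:C\to G$ and $\ell_D:D\to G$ so that the partition-carrying factor is sent to the identity $1_G\in G$ and the symbol-carrying factor pushes $\ell$ forward along $\varphi$. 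A short check then shows that $\varphi$ is compatible with $\ell$ and $\ell_{CD}$ and that $\phi$ is compatible with $\ell_{\mathcal{P}}$ and $\ell_{DC}$.

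Next I observe that \eqref{eq:MPQ}, read entrywise over the formal sums ${\frak S}_\Sigma$ and ${\frak S}_{\Sigma_{\mathcal{P}}}$, transforms under these two relabelings into the required $G$-graded equalities \eqref{eq:GMM'}. The relations \eqref{eq:IXX} and \eqref{eq:XP} make no reference to $G$ and are unaltered. This gives the properly $G$-strong shift equivalence in $1$-step asserted in (i). Part (ii) is then immediate from Lemma~\ref{lem:Gst1step} applied to this $1$-step properly $G$-strong shift equivalence: the lemma yields a map $b:\Sigma\to G$, given explicitly by $b(x_0)=\ell_C(c_0)$ whenever $\varphi(x_0)=c_0d_0$, and the associated forward bipartite conjugacy is by construction the conjugacy $\varPhi:\Lambda_{\frak L}\to\Lambda_{{\frak L}_{[\mathcal{P}]}}$ coming from the in-splitting. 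The cocycle relation \eqref{eq:cocycle} supplied by Lemma~\ref{lem:Gst1step} is then precisely \eqref{eq:lPcocycle}.

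The main obstacle I expect is the first step: writing down specifications $\varphi,\phi$ that exhibit the in-splitting as a $1$-step properly strong shift equivalence in a form that cleanly separates $C\cdot D$ into a ``symbol'' factor (onto which $\ell$ can be pushed) and a ``partition'' factor (which can be sent to $1_G$). Once this clean separation is in hand, assigning $\ell_C,\ell_D$ and verifying the compatibility conditions are routine, and (ii) then drops out of Lemma~\ref{lem:Gst1step} automatically. The three remaining cases (out-splittings and the two amalgamations) will follow by the same pattern, exchanging the roles of past and future or of splitting and amalgamating.
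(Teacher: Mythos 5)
Your proposal is correct and follows essentially the same route as the paper: the paper takes $C=\Sigma$, $D=\mathcal{P}$ with $\varphi(\alpha)=\alpha\cdot[\alpha]$ and $\phi(p,\alpha)=p\cdot\alpha$, sets $\ell_C=\ell$, $\ell_D\equiv 1$, $\ell_{\mathcal{P}}(p,\alpha)=\ell(\alpha)$, verifies the two compatibility conditions, and cites \cite{ETDS2003} for the underlying $1$-step properly strong shift equivalence before deducing (ii) from Lemma \ref{lem:Gst1step}. The ``clean separation'' you flag as the main obstacle is exactly this choice $C=\Sigma$, $D=\mathcal{P}$, which the paper likewise resolves by appeal to \cite[Section 5]{ETDS2003}.
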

\begin{proof}
(i)
We put
$C = \Sigma, D = {\mathcal{P}}$.
Let us denote by $[\alpha]$ 
the partition class of a symbol
$\alpha \in \Sigma$ in ${\mathcal{P}}$.
Let us define the specification
$\varphi:\Sigma \rightarrow C\cdot D$ and
$\phi:\Sigma_{\mathcal{P}}\rightarrow D\cdot C$
by
$\varphi(\alpha) = \alpha\cdot[\alpha]$
and
$\phi(p,\alpha) = (p,\alpha)$.
The specifications 
$\varphi,\phi$
yield the properly strong shift equivalence between
$(\M,I)$ and $(\M_{\mathcal{P}}, I_{\mathcal{P}})$
(\cite{ETDS2003}).
We set 
$\ell_C:C\rightarrow G$
and
$\ell_D:D\rightarrow G$
by $\ell_C = \ell$
and
$\ell_D(p) = 1$ the unit of $G$ 
for all $p \in {\mathcal{P}}$.
We define
$\ell_{\mathcal{P}}:\Sigma_{\mathcal{P}} \rightarrow G$
by
$\ell_{\mathcal{P}}(p,c) = \ell(c)$.
We then have for $c \in C$ and $p \in {\mathcal{P}}$
\begin{align*}
\ell_{CD}\circ\varphi(c) 
& = \ell_{CD}(c[c]) 
  = \ell_{C}(c)\ell_{D}([c]) 
  = \ell_{C}(c) 
  = \ell(c), \\
\ell_{DC}\circ\phi(p,c) 
& = \ell_{DC}(p\cdot c)
  =  \ell_{D}(p) \ell_{C}( c)
  =  1\cdot \ell_{C}( c)
  =  \ell_{\mathcal{P}}(p,c).
\end{align*}
Hence 
$\varphi$ is compatible to $\ell$ and $\ell_{CD}$,
and
$\phi$ is compatible to $\ell_{\mathcal{P}}$ and $\ell_{DC}$.
As in \cite[p. 1568]{ETDS2003},
the  $\lambda$-graph systems
${\frak L}$
and
${\frak L}_{[\mathcal{P}]}$
are bipartitely related,
there exist  symbolic matrices
$\P_l$ over $C$ and 
$\Q_l$ over $D$ 
which give rise to properly strong shift equivalence between
$(\M,I)$ and $(\M_{\mathcal{P}}, I_{\mathcal{P}})$.
Since
\begin{equation*}
{\M}_{l,l+1} 
\overset{\varphi}{\simeq} {\P}_{2l}{\Q}_{2l+1}, 
\qquad
{\M}_{[{\mathcal{P}}],l,l+1} 
\overset{\phi}{\simeq} {\Q}_{2l}{\P}_{2l+1}
\end{equation*}
and
$\ell_{CD}\circ\varphi = \ell,
\ell_{DC}\circ\phi = \ell_{[\mathcal{P}]}$
we have
\begin{equation*}
{\M}_{l,l+1}^\ell 
= {\P}_{2l}^{\ell_C}{\Q}_{2l+1}^{\ell_D}, 
\qquad
{\M}_{[\mathcal{P}],l,l+1}^{\ell_{[\mathcal{P}]}} 
={\Q}_{2l}^{\ell_D}{\P}_{2l+1}^{\ell_C},
\end{equation*}
so that 
$(\M^\ell,I)$ and 
$(\M_{\mathcal{P}}^{\ell_{[\mathcal{P}]}}, I_{\mathcal{P}})$
are properly $G$-strong shift equivalent.

(ii)
By Lemma \ref{lem:Gst1step},
there exists the desired map
$b:\Sigma\rightarrow G$
satisfying the equality
\eqref{eq:lPcocycle}. 
\end{proof}
We then have the following proposition.
\begin{proposition} \label{eq:propmain}
Let $(\M,I)$
and
$(\M',I')$ be 
the canonical symbolic matrix systems for subshifts $\Lambda$
and
$\Lambda'$ respectively.
Let maps  $\ell:\Sigma\rightarrow G, \ell':\Sigma'\rightarrow G$
be given.
Suppose that there exist a topological conjugacy
$\varPhi: \Lambda\rightarrow \Lambda'$
and a continuous map
$\gamma:\Lambda\rightarrow G$ such that 
\begin{equation}
\tau_\ell(x) 
 = \gamma(x) \tau_{\ell'}(\varPhi(x)) \gamma(\sigma(x))^{-1}, 
 \qquad x \in \Lambda.
\label{eq:propcocycle}
\end{equation}
Then their respect \smss $(\M^{\ell},I), (\M^{\prime \ell'},I')$
are properly $G$-strong shift equivalent.
\end{proposition}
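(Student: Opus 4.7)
The plan is to combine the Williams--Nasu-style decomposition of the topological conjugacy $\varPhi$ into elementary bipartite conjugacies with the two building blocks already established: the preceding in-splitting lemma (together with its three analogues for out-splittings and in/out-amalgamations, proved in the same way) and Lemma~\ref{lem:tauGstone}.

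First I invoke \cite[Theorem~7.1]{ETDS2003} to write $\varPhi:\Lambda\rightarrow\Lambda'$ as a finite composition $\varPhi=\varPhi_N\circ\cdots\circ\varPhi_1$ of forward/backward bipartite conjugacies, each arising from one of the four elementary operations on canonical $\lambda$-graph systems. Denote the intermediate subshifts by $\Lambda=\Lambda_0,\Lambda_1,\dots,\Lambda_N=\Lambda'$ with canonical symbolic matrix systems $(\M_i,I_i)$. Starting from $\ell_0=\ell$, the preceding lemma (applied in-splitting by in-splitting, with its analogues for the other three operations) produces at each step $i$ a map $\ell_{i+1}:\Sigma_{i+1}\rightarrow G$ and a $1$-block coboundary $\gamma_{b_i}$ exhibiting a properly $G$-strong shift equivalence in $1$-step between $(\M_i^{\ell_i},I_i)$ and $(\M_{i+1}^{\ell_{i+1}},I_{i+1})$ and satisfying
\begin{equation*}
\tau_{\ell_i}(x)=\gamma_{b_i}(x)\,\tau_{\ell_{i+1}}(\varPhi_{i+1}(x))\,\gamma_{b_i}(\sigma_{\Lambda_i}(x))^{-1},\qquad x\in\Lambda_i.
\end{equation*}
Concatenating these steps yields a map $\ell_N:\Sigma'\rightarrow G$, a continuous cocycle $\tilde\gamma:\Lambda\rightarrow G$ assembled from the $\gamma_{b_i}$, and a properly $G$-strong shift equivalence from $(\M^{\ell},I)$ to $(\M^{\prime\ell_N},I')$ together with the identity $\tau_\ell=\tilde\gamma\cdot(\tau_{\ell_N}\circ\varPhi)\cdot(\tilde\gamma\circ\sigma)^{-1}$.

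Next I reconcile $\ell_N$ with the given $\ell'$. Combining the above with the hypothesis $\tau_\ell=\gamma\cdot(\tau_{\ell'}\circ\varPhi)\cdot(\gamma\circ\sigma)^{-1}$ and transporting by $\varPhi^{-1}$, the continuous function $\delta:=(\tilde\gamma^{-1}\gamma)\circ\varPhi^{-1}:\Lambda'\rightarrow G$ satisfies
\begin{equation*}
\tau_{\ell_N}(y)=\delta(y)\,\tau_{\ell'}(y)\,\delta(\sigma_{\Lambda'}(y))^{-1},\qquad y\in\Lambda'.
\end{equation*}
If $\delta$ were already a $1$-block function, Lemma~\ref{lem:tauGstone} would immediately yield a properly $G$-strong shift equivalence between $(\M^{\prime\ell_N},I')$ and $(\M^{\prime\ell'},I')$. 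In general $\delta$ is determined by some finite window $[-L,L]$, so I pass to the $(2L+1)$-block presentation of $\Lambda'$ via a finite chain of in- and out-splittings of its canonical $\lambda$-graph system; applied independently to $\ell_N$ and to $\ell'$, each such splitting lifts to a properly $G$-strong shift equivalence by the splitting lemmas, and in the higher-block presentation $\delta$ becomes a $1$-block function. Lemma~\ref{lem:tauGstone} now applies, and amalgamating back produces the desired equivalence between $(\M^{\prime\ell_N},I')$ and $(\M^{\prime\ell'},I')$. Concatenating this with the chain from the first step completes the proof.

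The main obstacle will be the careful bookkeeping of cocycles through the chain, in particular verifying that the individual $1$-block coboundaries $\gamma_{b_i}$ compose to a continuous cocycle $\tilde\gamma$ which, when compared to the given $\gamma$, produces a genuine continuous coboundary $\delta$ on $\Lambda'$ between $\tau_{\ell_N}$ and $\tau_{\ell'}$. A secondary delicate point is ensuring that the higher-block reduction of $\delta$ to $1$-block form can be realised as an actual chain of properly $G$-strong shift equivalences (not merely as a topological conjugacy), since Lemma~\ref{lem:tauGstone} requires both symbolic matrix systems to be canonical for the same subshift and the coboundary to be $1$-block.
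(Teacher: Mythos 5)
Your proposal is correct and follows essentially the same route as the paper's proof: decompose $\varPhi$ via \cite[Theorem 7.1]{ETDS2003} into elementary bipartite conjugacies, apply the splitting lemma step by step to build $\ell_N$ and the composed coboundary, compare with the given $\gamma$ to get a coboundary between $\tau_{\ell_N}$ and $\tau_{\ell'}$ on $\Lambda'$, reduce it to a one-block function by passing to a higher-block presentation, and finish with Lemma~\ref{lem:tauGstone}. Your explicit treatment of the higher-block reduction is in fact slightly more careful than the paper's one-line assertion that one may assume $\gamma'$ is a one-block map.
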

\begin{proof}
Since
$\varPhi: \Lambda\rightarrow \Lambda'$
is a topological conjugacy,
one knows that the canonical \smss
 $(\M,I), (\M',I')$
are properly strong shift equivalent.
By  \cite[Theorem 7.1]{ETDS2003},
properly strong shift equivalence between
$(\M,I)$ and $(\M',I')$
is given by a finite chain of the four operations,
merged out-splittings,
merged in-splittings,
merged out-amalgamations
and
merged in-amalgamations.
This means that there exists a finite sequence 
$\Lambda_0, \Lambda_1,\dots,\Lambda_N$ 
of subshifts such that
$\Lambda_0 = \Lambda$ and
$\Lambda_N = \Lambda'$
and
their canonical $\lambda$-graph systems
${\frak L}^{\Lambda_i}$ and
${\frak L}^{\Lambda_{i+1}}$
are related by one of the four operations.
Denote by 
$(\M_i, I_i)$ the associated \sms for 
${\frak L}^{\Lambda_i}$. 
Suppose that $(\M_{i+1},I_{i+1})$ 
is obtained 
from $(\M_i,I_i)$
by an in-splitting on their  $\lambda$-graph systems.
By the previous lemma,
for a given
$\ell_i: \Sigma_i \rightarrow G$,
there exist maps
$\ell_{i+1}: \Sigma_{i+1} \rightarrow G$
and
$b_i: \Sigma_i \rightarrow G$
such that 
\begin{equation}
\tau_{\ell_i}(x) 
 = \gamma_{b_i}(x) 
 \tau_{\ell_{i+1}}(\varPhi_i(x)) 
  \gamma_{b_i}(\sigma_i(x))^{-1}, \qquad x \in \Lambda_i,
\label{eq:tauli}
\end{equation}
for $i=0,1,\dots,N-1$,
where
$\varPhi_i:\Lambda_i \rightarrow \Lambda_{i+1}$
are the associated  bipartite conjugacies to the four operations.
Since the merged operation defined in \cite{ETDS2003}
does not change the presenting subshifts,
one knows that 
if 
$(\M_{i+1},I_{i+1})$ 
is obtained 
from $(\M_i,I_i)$
by an merged in-splitting on their  $\lambda$-graph systems,
the same equality as 
\eqref{eq:tauli}
holds.
Similar arguments work for the other 
three operations,
merged out-splittings, merged in-amalgamations and merged out-amalgamations.
Since the given topological conjugacy
$\varPhi:\Lambda \rightarrow \Lambda'$
is the compositions of the bipartite conjugacies
$\varPhi_i, i=1,\dots,N$,
 by starting a given map $\ell:\Sigma\rightarrow G$
 denoted by $\ell_0$
and the subshift $\Lambda = \Lambda_0$,
we finally obtain a map
$\ell_N:\Sigma_N = \Sigma' \rightarrow G$
and a continuous map
$\gamma_N: \Lambda \rightarrow G$ 
such that 
\begin{equation}
\tau_\ell(x) 
 = \gamma_{N}(x) 
 \tau_{\ell_{N}}(\varPhi(x)) 
  \gamma_{N}(\sigma(x))^{-1}, \qquad x \in \Lambda.
\label{eq:tauN}
\end{equation}
By combining the given identity 
\eqref{eq:propcocycle},
we have
\begin{equation}
\tau_{\ell'}(\varPhi(x)) 
 = \gamma(x)^{-1}\gamma_{N}(x)
 \tau_{\ell_{N}}(\varPhi(x)) 
  \gamma_N(\sigma(x))^{-1}\gamma(\sigma(x)), \qquad x \in \Lambda.
\label{eq:tauprime}
\end{equation}
By putting
$\gamma'(y) = \gamma(\varPhi^{-1}(y))^{-1}\gamma_{N}(\varPhi^{-1}(y)),$
we get
\begin{equation}
\tau_{\ell'}(y) 
 = \gamma'(y)
 \tau_{\ell_{N}}(y) 
  \gamma'(\sigma'(y))^{-1}, \qquad y \in \Lambda'.
\label{eq:tauy}
\end{equation}
Since the operations of higher block systems are obtained by in-splittings
or out-splittings
of $\lambda$-graph systems,
one may assume that 
the above map
$ \gamma'$ is a one-block map.
 By Lemma \ref{lem:tauGstone},
 we see that 
$(\M^{\prime \ell'}, I')$ and
$(\M^{\prime \ell_N}, I')$ 
are properly $G$-strong shift equivalent.
Since
$
(\M_i^{\ell_i},I_i)\underset{G,1-pr}{\approx} 
(\M_{i+1}^{\ell_{i+1}},I_{i+1})
$
 for 
 $i=0,1,\dots, N-1$
 and
$
(\M^{\ell},I) =(\M_0^{\ell_0},I_0), 
(\M_{N}^{\ell_N},I_N)=(\M^{\prime \ell_N},I')
$
 we conclude
that
$
(\M^{\ell},I)\underset{G,N-pr}{\approx} 
(\M^{\prime \ell'},I').
$
\end{proof}
Therefore we have the following theorem:
\begin{theorem}\label{thm:main}
Let $G$ be a finite group.
Let ${\frak L}$ and ${\frak L}'$
be $\lambda$-graph systems over $\Sigma$
and
$\Sigma'$
respectively.
Suppose that maps
$\ell:\Sigma\rightarrow G$ and 
$\ell':\Sigma'\rightarrow G$
are given.
Let $(\M^{\ell},I)$ and $(\M^{\prime \ell'},I')$ be their \smss over $\Zp G$ 
through the maps $\ell$ and $\ell',$ respectively.
Consider the following three conditions:
\begin{enumerate}
\renewcommand{\labelenumi}{(\arabic{enumi})}
\item 
$(\M^{\ell},I)$ and $(\M^{\prime \ell'},I')$ are properly $G$-strong shift equivalent.     
\item 
There exists a topological conjugacy
$\varPhi:\Lambda_{\frak L}\rightarrow\Lambda_{{\frak L}'}$
such that $\tau_\ell$ is cohomologous to $\tau_{\ell'}\circ\varPhi$.
\item 
The $G$-subshifts $\Lambda_{{\frak L}^{G,\ell}}$
and
$\Lambda_{{{\frak L}'}^{G,\ell'}}$
are $G$-conjugate.
\end{enumerate}
Then we have
$$
(1) \Longrightarrow (2) \Longleftrightarrow (3).
$$
If in particular,
${\frak L}$ and ${\frak L}'$ are both the canonical $\lambda$-graph systems,
we have
$
(2) \Longrightarrow (1).
$
\end{theorem}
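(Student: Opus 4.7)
The proof assembles three results already established in the excerpt, so the plan is to clarify which lemma does which implication rather than to perform fresh calculations.

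First, for the equivalence $(2)\Longleftrightarrow(3)$, the idea is to pass from the $G$-$\lambda$-graph systems ${\frak L}^{G,\ell}$ and ${{\frak L}'}^{G,\ell'}$ to the skew products $(G\ltimes\Lambda_{\frak L},\tau_\ell\ltimes\sigma_{\frak L})$ and $(G\ltimes\Lambda_{{\frak L}'},\tau_{\ell'}\ltimes\sigma_{{\frak L}'})$ via Theorem \ref{thm:G-conj}. These $G$-conjugacies reduce (3) to the statement that the two skew products are $G$-conjugate. Proposition \ref{prop:G-actioncoho}, applied with $\tau=\tau_\ell$ and $\tau'=\tau_{\ell'}$, then converts this $G$-conjugacy statement into exactly the existence of a topological conjugacy $\varPhi:\Lambda_{\frak L}\rightarrow\Lambda_{{\frak L}'}$ together with a continuous cohomology witness $\gamma:\Lambda_{\frak L}\rightarrow G$, which is precisely (2). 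Both directions of Proposition \ref{prop:G-actioncoho} are already proved, so this equivalence is immediate.

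Next, for $(1)\Longrightarrow(2)$, the plan is to invoke Proposition \ref{prop:Gskew} directly. That proposition extracts, from an $N$-step properly $G$-strong shift equivalence between $(\M^{\ell},I)$ and $({\M'}^{\ell'},I')$, both a topological conjugacy $\varPhi$ (produced as a composition of forward bipartite conjugacies coming from each 1-step equivalence) and a continuous cocycle $\gamma:\Lambda_{\frak L}\rightarrow G$ (formed as a telescoping product of the single-step maps $\gamma_n$ obtained in Lemma \ref{lem:Gst1step}) satisfying the required cohomology identity. Nothing beyond citing this proposition is needed here; the implication from (1) to (2) holds for \emph{arbitrary} $\lambda$-graph systems, not only canonical ones.

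Finally, for the converse $(2)\Longrightarrow(1)$ under the additional hypothesis that ${\frak L}$ and ${\frak L}'$ are canonical, the strategy is to apply Proposition \ref{eq:propmain}, which is formulated precisely for canonical symbolic matrix systems. The role of canonicity is essential here: it allows one to invoke \cite[Theorem 7.1]{ETDS2003} to decompose the topological conjugacy $\varPhi$ into a finite chain of merged in-splittings, out-splittings, in-amalgamations, and out-amalgamations of the canonical $\lambda$-graph systems, at each stage transporting the cocycle by Lemma \ref{lem:tauGstone} and the in-splitting lemma to yield properly $G$-strong shift equivalence in 1-step. The step where one would expect the most work is ensuring that the transported cocycle $\gamma'$ can be taken to be a one-block map, which is why higher block recoding (itself expressible as splittings) is invoked before applying Lemma \ref{lem:tauGstone}. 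Beyond assembling these ingredients and noting that $(\M^{\ell},I)=(\M_0^{\ell_0},I_0)$ starts the chain and $(\M_N^{\ell_N},I_N)=({\M'}^{\ell_N},I')$ ends it (with a final 1-step properly $G$-strong shift equivalence to $({\M'}^{\ell'},I')$), the proof is complete.

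The only genuine obstacle is the last implication, and it has been localized in Proposition \ref{eq:propmain}; the main theorem itself is therefore a clean corollary obtained by collecting Theorem \ref{thm:G-conj}, Proposition \ref{prop:G-actioncoho}, Proposition \ref{prop:Gskew}, and Proposition \ref{eq:propmain}.
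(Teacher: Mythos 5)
Your proposal is correct and follows essentially the same route as the paper: the author likewise derives $(2)\Leftrightarrow(3)$ from Proposition \ref{prop:G-actioncoho} (with the identification via Theorem \ref{thm:G-conj} left implicit), $(1)\Rightarrow(2)$ from Proposition \ref{prop:Gskew}, and $(2)\Rightarrow(1)$ in the canonical case from Proposition \ref{eq:propmain}. Your explicit invocation of Theorem \ref{thm:G-conj} to pass between the presented subshifts $\Lambda_{{\frak L}^{G,\ell}}$ and the skew products is a small but welcome clarification of a step the paper leaves unstated.
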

\begin{proof}
The implications
$(2) \Longleftrightarrow (3)$
come from Proposition \ref{prop:G-actioncoho}.
The implication
$
(1) \Longrightarrow (2)
$
comes from Proposition \ref{prop:Gskew}.
In case that both 
${\frak L}$ and ${\frak L}'$
are the canonical $\lambda$-graph systems,
the associated symbolic matrix systems 
$(\M,I)$ and $(\M',I')$
are the canonical symbolic matrix systems
for
$
\Lambda_{\frak L}
$
and
$
\Lambda_{{\frak L}'}
$
respectively.
Hence the implication 
$
(2) \Longrightarrow (1)
$
in this case
follows from Proposition \ref{eq:propmain}.
\end{proof}

\section{ Example and Markov--Dyck shifts}
In the first half of this section,
we present an extension of 
a certain $\lambda$-graph system
presenting Dyck shift $D_2$
by a finite group.
In the second half of this section,
we study $G$-conjugacy classes of extensions of a family of nonsofic subshifts called Markov--Dyck shifts.

{\bf 1.}
The  Dyck shift $D_2$ is a typical example of a non-sofic subshift
(cf. \cite{Kr}, \cite{KM2}, \cite{JOT2007}).
We consider the Dyck shift $D_2$ with alphabet 
$\Sigma = \Sigma^- \cup \Sigma^+$
where
$\Sigma^- = \{ \alpha_1,\alpha_2 \},
\Sigma^+ = \{ \beta_1,\beta_2 \}.
$
The symbols 
$ \alpha_i, \beta_i$
correspond to 
the brackets
$(_i,  )_i $
respectively.
To define the Dyck shift,
we use the Dyck inverse monoid $\mathbb{D}_2$
which is a monoid having the relations
\begin{equation}
\alpha_i \beta_j
=
\begin{cases}
 {\bold 1} & \text{ if } i=j,\\
 0  & \text{ otherwise} 
\end{cases} 
\label{eq:D2}
\end{equation}
for $ i,j = 1,2$ (\cite{Kr}, \cite{KM2}).
A word 
$\omega_1\cdots\omega_n $ 
of $\Sigma$
is admissible for $D_2$ precisely if
$
\prod_{m=1}^{n} \omega_m \ne 0.
$
For a word $\omega= \omega_1 \cdots \omega_n $ of $\Sigma,$ 
we denote by $\tilde{\omega}$ its reduced form.
Namely $\tilde{\omega}$ is a word of $\Sigma \cup \{ 0, {\bold 1} \}$
obtained after the operations \eqref{eq:D2}.
Hence a word $\omega$ of $\Sigma$
is forbidden for $D_2$ if 
and only if $\tilde{\omega} = 0$.
There are two typical $\lambda$-graph systems presenting $D_2$.
One is the canonical $\lambda$-graph system 
${\frak L}^{D_2}$,
and the other one is so called the Cantor horizon $\lambda$-graph system
written $\LCHD2$.
The former one corresponds to
the left Krieger cover and
the latter one does to
the left Fischer cover.
Although the former ${\frak L}^{D_2}$
is not irreducible
as a $\lambda$-graph system,
the latter $\LCHD2$
is irreducible
so that the associated $C^*$-algebra 
${\mathcal{O}}_{\LCHD2}$
is simple and purely infinite (\cite{DocMath2002}).
In this section, we treat the latter one $\LCHD2$.

Let us describe the Cantor horizon $\lambda$-graph system $\LCHD2$ of $D_2$
which has been introduced in \cite{KM2} (cf. \cite{JOT2007}).
Let $\Sigma_2$ be the full $2$-shift 
$\{ 1, 2 \}^{\Z}$.
We denote by 
$B_l(D_2)$
and 
$B_l(\Sigma_2)$
 the set of admissible words of length 
$l$ of $D_2$
and that of 
$\Sigma_2$ respectively.
The vertices $V_l$ of $\LCHD2$
 at level $l$
are given by the words of length $l$
consisting of the symbols of $\Sigma^+$
such as 
$$
V_l = \{(\beta_{\mu_1}\cdots\beta_{\mu_l}) \in B_l(D_2)
 \mid \mu_1\cdots\mu_l\in B_l(\Sigma_2) \}.
$$
Hence the cardinal number of $V_l$ is $2^l$.
The mapping $\iota ( = \iota_{l,l+1}) :V_{l+1}\rightarrow V_l$ 
is defined by deleting the rightmost symbol of a word such as 
$$ 
\iota(\beta_{\mu_1}\cdots\beta_{\mu_{l+1}}) 
= (\beta_{\mu_1}\cdots\beta_{\mu_l}),
\qquad (\beta_{\mu_1}\cdots\beta_{\mu_{l+1}}) \in V_{l+1}.
$$
There exists an edge labeled $\alpha_j$ from
$(\beta_{\mu_1}\cdots\beta_{\mu_l})\in V_l$ 
to
$(\beta_{\mu_0}\beta_{\mu_1}\cdots\beta_{\mu_l})\in V_{l+1}$
precisely if
$\mu_0 = j,$
and 
there exists an edge labeled $\beta_j$ from
$(\beta_j\beta_{\mu_1}\cdots\beta_{\mu_{l-1}})\in V_l$ 
to
$(\beta_{\mu_1}\cdots\beta_{\mu_{l+1}})\in V_{l+1}.$
The resulting labeled Bratteli diagram with 
$\iota$-map
becomes a $\lambda$-graph system over $\Sigma$
which  presents the Dyck shift $D_2$.
It is called 
the Cantor horizon $\lambda$-graph system for $D_2$, 
denoted by $\LCHD2$ in \cite{KM}. 
Throughout the rest of this section,
we denote the $\lambda$-graph system
$\LCHD2$ by ${\frak L} =(V,E,\lambda,\iota)$ for brevity.

Let $G$ be the finite abelian group
${\Z_2}=\{0,1\} =\Z/2\Z$
of order two.
We will define a map $\ell: \Sigma\rightarrow {\Z_2}$
by
\begin{equation}
\ell(\alpha_1) = \ell(\beta_1) =1, \qquad
\ell(\alpha_2) = \ell(\beta_2) =0. \label{eq:dyckell}
\end{equation}
Let us consider the extension 
${\frak L}^{\Z_2}
=(V^{\Z_2}, E^{\Z_2}, \iota^{\Z_2}, \lambda^{\Z_2}) $
of the $\lambda$-graph system 
${\frak L}(=\LCHD2)$
by the map $\ell$.
The alphabet ${\Sigma}^{\Z_2}$
of ${\frak L}^{\Z_2}$
is
$\Z_2 \times \Sigma$,
and
$V^{\Z_2}= \Z_2 \times V_l,
 E^{\Z_2}= \Z_2 \times E_{l,l+1}.
$
In what follows,
we will study the extension 
${\frak L}^{\Z_2}$.
The map
$
\varphi_{V}:V^{\Z_2}_l\rightarrow V_{l+1}
$
is given by the composition
$
\varphi_{V}
=
\xi_{V}\circ \eta_{V}
$
of the maps
$$
\eta_{V}:V^{\Z_2}_l\rightarrow V_{l+1},
\qquad
\xi_{V}:V_{l+1}\rightarrow V_{l+1}
$$
defined below.
The former $\eta_V$ is defined by
\begin{equation*}
\eta_{V}(g,\gamma_1\cdots\gamma_l) =
\begin{cases}
(\beta_2\gamma_1\cdots\gamma_l) & \text{ if } g =0, \\
(\beta_1\gamma_1\cdots\gamma_l) & \text{ if } g =1 
\end{cases}
\end{equation*}
for
$
g \in \Z_2 =\{0,1\}, 
(\gamma_1\cdots\gamma_l)\in B_l(\Sigma_+).
$
The latter $\xi_V$ is defined by the successive operations 
of the words of length two:
\begin{equation*}
\beta_1 \beta_1 \rightarrow \beta_1 \beta_2,
\qquad
\beta_1 \beta_2 \rightarrow \beta_1 \beta_1
\end{equation*}
from the leftmost of words in 
$V_{l+1}$.
We denote by
$\overline{\beta_1 \beta_1}= \beta_1 \beta_2,
\overline{\beta_1 \beta_2}= \beta_1 \beta_1. 
$
The operation $\xi_V$ 
acts on words of $V$ from the leftmost
successively such as
\begin{align*}
          \beta_1 \beta_2\beta_2\beta_1\beta_1\beta_1 \beta_2 \beta_1
&
\rightarrow
\overline{\beta_1 \beta_2}\beta_2\beta_1\beta_1\beta_1 \beta_2 \beta_1 
=         \beta_1 \beta_1 \beta_2\beta_1\beta_1\beta_1 \beta_2 \beta_1 \\
&
\rightarrow
          \beta_1 \overline{\beta_1\beta_2}\beta_1\beta_1\beta_1 \beta_2 \beta_1 
=
          \beta_1           \beta_1\beta_1 \beta_1\beta_1\beta_1 \beta_2 \beta_1 \\
&
\rightarrow
          \beta_1           \beta_1\overline{\beta_1 \beta_1}\beta_1\beta_1 \beta_2 \beta_1 
=
          \beta_1           \beta_1          \beta_1 \beta_2 \beta_1\beta_1 \beta_2 \beta_1 \\
&
\rightarrow
          \beta_1           \beta_1          \beta_1 \beta_2 
\overline{\beta_1\beta_1} \beta_2 \beta_1 
=
          \beta_1           \beta_1          \beta_1 \beta_2 
          \beta_1\beta_2 \beta_2 \beta_1.   
\end{align*}
We next define a map 
$
\varphi_V:\Sigma^{\Z_2}(=\{0,1\}\times \Sigma)\rightarrow \Sigma
$ 
as follows.
\begin{gather*}
\varphi_\Sigma(0,\beta_2) = \varphi_\Sigma(0,\beta_1) =\beta_2,\qquad 
\varphi_\Sigma(1,\beta_2) = \varphi_\Sigma(1,\beta_1) =\beta_1, \\ 
\varphi_\Sigma(0,\alpha_2) = \varphi_\Sigma(1,\alpha_1) =\alpha_2,\qquad 
\varphi_\Sigma(1,\alpha_2) = \varphi_\Sigma(0,\alpha_1) =\alpha_1
\end{gather*}
\begin{lemma}
There exists a bijective correspondence
$\varphi_E: E_{l-1,l}^{\Z_2} \rightarrow E_{l,l+1}$
satisfying
\begin{equation*}
\varphi_V(s(e^g)) = s(\varphi_E(e^g)), \qquad
\varphi_V(t(e^g)) = t(\varphi_E(e^g)), \qquad
\varphi_\Sigma(\lambda^{\Z_2}(e^g)) = \lambda(\varphi_E(e^g)), 
\end{equation*}
for $e^g = (g,e) \in E_{l-1,l}^{\Z_2} = \Z_2 \times E_{l-1,l}.$
\end{lemma}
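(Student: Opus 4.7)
The plan is to define $\varphi_E(e^g)$ as the unique edge of ${\frak L}$ whose source, target, and label are forced by the already-given maps $\varphi_V$ and $\varphi_\Sigma$; the lemma then reduces to checking that such an edge actually exists in ${\frak L}$ and that the resulting correspondence is bijective. The key preliminary step is to compute $\varphi_V$ explicitly. Identifying $\{\beta_1,\beta_2\}$ with $\Z/2\Z$ by $\beta_1\leftrightarrow 0$ and $\beta_2\leftrightarrow 1$, the rewrites $\beta_1\beta_1\mapsto\beta_1\beta_2$ and $\beta_1\beta_2\mapsto\beta_1\beta_1$ that generate $\xi_V$ translate to the recursion $n_0=h$, $n_{i+1}=m_{i+1}+n_i+1$ in $\Z/2\Z$, where $(h,m_1,\ldots,m_{l-1})$ is the input and $(n_0,\ldots,n_{l-1})$ the output. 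This is invertible, so $\xi_V$ is a bijection of each $V_l$ and $\varphi_V=\xi_V\circ\eta_V$ is a bijection $V_{l-1}^{\Z_2}\to V_l$. A straightforward induction gives a closed form in which the leading letter of $\varphi_V(g,(\mu_1,\ldots,\mu_{l-1}))$ is $\beta_2$ for $g=0$ and $\beta_1$ for $g=1$, and the subsequent letters are affine functions over $\Z/2\Z$ of $g$, the $\mu_i$, and the position parity.

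Next I would split on the label of $e$. If $\lambda(e)=\alpha_j$, the inputs to $\xi_V$ for $\varphi_V(s(e^g))$ and $\varphi_V(t(e^g))$ differ by a single extra prepended letter built from $j$ and from the shift of $g$ by $\ell(\alpha_j)$; using the identity $(j-1)+\ell(\alpha_j)=1$ in $\Z/2\Z$, which holds for both $j$ since $\ell(\alpha_1)=1$ and $\ell(\alpha_2)=0$, the recursion yields that $\varphi_V(t(e^g))$ is obtained from $\varphi_V(s(e^g))$ by prepending a single letter, and a direct comparison identifies this letter as $\varphi_\Sigma(g,\alpha_j)$. This is the signature of a unique $\varphi_\Sigma(g,\alpha_j)$-labeled $\alpha$-edge in ${\frak L}$. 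If $\lambda(e)=\beta_j$, the analogous identity $(j-1)+\ell(\beta_j)=1$ shows that the last $l-1$ letters of $\varphi_V(s(e^g))$ agree with the first $l-1$ letters of $\varphi_V(t(e^g))$, that the leading letter of $\varphi_V(s(e^g))$ is $\varphi_\Sigma(g,\beta_j)$, and that the two trailing letters of $\varphi_V(t(e^g))$ are affine in the two free letters of $t(e)$ and range bijectively over all four possibilities. This is the signature of a unique $\varphi_\Sigma(g,\beta_j)$-labeled $\beta$-edge in ${\frak L}$ from $\varphi_V(s(e^g))$ to $\varphi_V(t(e^g))$.

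For bijectivity, a direct count gives $|E^{\Z_2}_{l-1,l}|=|E_{l,l+1}|$, so injectivity suffices. Since $\varphi_V$ is a bijection on vertices, the source of $\varphi_E(e^g)$ recovers $(g,s(e))$; the label then recovers $\lambda(e)$ by the injectivity of $\varphi_\Sigma(g,\cdot)$; and the target together with source and label determines $e$. The main obstacle is the $\beta$-case bookkeeping: the two recursions (for $\varphi_V(s(e^g))$ and for $\varphi_V(t(e^g))$) start from distinct initial parities differing by $\ell(\beta_j)$, and one must check that the cascading flips throughout $\xi_V$ produce precisely a one-step shift of the output with two free trailing letters. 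Everything comes down to the two identities $(j-1)+\ell(\alpha_j)=(j-1)+\ell(\beta_j)=1$, which are forced by the particular choice of $\ell$ in \eqref{eq:dyckell}.
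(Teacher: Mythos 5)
Your proposal is correct and follows essentially the same route as the paper: $\varphi_E(e^g)$ is taken to be the unique edge of ${\frak L}^{Ch(D_2)}$ whose source, target and label are forced by $\varphi_V$ and $\varphi_\Sigma$, and its existence is checked by a case analysis on the label type ($\alpha_j$ versus $\beta_j$) and the group element, which is exactly the paper's four-case computation. Your $\Z/2\Z$-recursion for $\xi_V$ and the counting-plus-injectivity argument for bijectivity (which the paper dismisses as routine) are refinements rather than a different method — in particular the recursion correctly tracks the cascading flips that the paper's shorthand identities such as $\xi_V(\beta_2\beta_j\gamma)=\beta_2\beta_j\xi_V(\gamma)$ elide — but the underlying argument is the same.
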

\begin{proof}
Tak an arbitrary fixed ege
$e^g = (g,e) \in E_{l-1,l}^{\Z_2} = \Z_2 \times E_{l-1,l}.$
The edge 
$(g,e)$ has four cases for 
$g=0,1, \, \lambda(e) = \alpha_j,\beta_j, j=1,2$
in the following way.

Case 1: $(g,\lambda(e)) = (0, \beta_j), j=1,2$.

\noindent
There exists $\gamma \in B_{l-2}(\Sigma_+)$ and $\eta \in B_2(\Sigma_+)$
such that 
$s(e) = \beta_j \gamma \in V_{l-1}$
and
$t(e) = \gamma\eta \in V_{l}$.
We then have
\begin{align*}
s(e^g) & = (0,s(e)) = (0, \beta_j \gamma),\\
t(e^g) & = (0 + \ell_{\Z_2}(e),t(e)) 
         = (0 + \ell(\beta_j), \gamma\eta)
         = (j, \gamma\eta)
\end{align*}
so that 
\begin{align*}
\varphi_V(s(e^g)) &  =\varphi_V(0, \beta_j \gamma) 
                     =\xi_{V}(\beta_2\beta_j \gamma)
                     =\beta_2\beta_j\xi_{V}( \gamma),\\
\varphi_V(t(e^g)) &  =\varphi_V(j, \gamma \eta) 
                     =\xi_{V}(\beta_j\gamma\eta)
                     =\beta_j\xi_{V}( \gamma\eta).
\end{align*}
Hence there exists a unique edge written 
$\varphi_E(e^g)$ in $E_{l,l+1}$
such that 
\begin{gather*}
s(\varphi_E(e^g))   =\beta_2\beta_j\xi_{V}( \gamma),\qquad
t(\varphi_E(e^g))   =\beta_j\xi_{V}( \gamma\eta),\\
\lambda(\varphi_E(e^g))  =\beta_2  =\varphi_\Sigma(\lambda^{\Z_2}(e^g)).
\end{gather*}

%
 
Case 2: $(g,\lambda(e)) = (1, \beta_j), j=1,2$.

\noindent
There exists $\gamma \in B_{l-2}(\Sigma_+)$ and $\eta \in B_2(\Sigma_+)$
such that 
$s(e) = \beta_j \gamma \in V_{l-1}$
and
$t(e) = \gamma\eta \in V_{l}$.
We then have
\begin{align*}
s(e^g) & = (1,s(e)) = (1, \beta_j \gamma),\\
t(e^g) & = (1 + \ell_{\Z_2}(e),t(e)) 
         = (1 + \ell(\beta_j), \gamma\eta)
         = (1+j, \gamma\eta)
\end{align*}
so that 
\begin{align*}
\varphi_V(s(e^g)) &  =\varphi_V(1, \beta_j \gamma) 
                     =\xi_{V}(\beta_1\beta_j \gamma)
                     =\beta_1\beta_{1+j}\xi_{V}(\gamma),\\
\varphi_V(t(e^g)) &  =\varphi_V(1+j, \gamma \eta) 
                     =\xi_{V}(\beta_{1+j}\gamma\eta)
                     =\beta_{1+j}\xi_{V}( \gamma\eta).
\end{align*}
Hence there exists a unique edge written 
$\varphi_E(e^g)$ in $E_{l,l+1}$
such that 
\begin{gather*}
s(\varphi_E(e^g))   =\beta_1\beta_{1+j}\xi_{V}( \gamma),\qquad
t(\varphi_E(e^g))   =\beta_{1+j}\xi_{V}( \gamma\eta),\\
\lambda(\varphi_E(e^g))  =\beta_1  =\varphi_\Sigma(\lambda^{\Z_2}(e^g)).
\end{gather*}

%
 
Case 3: $(g,\lambda(e)) = (0, \alpha_j), j=1,2$.

\noindent
There exists $\zeta \in B_{l-1}(\Sigma_+)$ 
such that 
$s(e) = \zeta \in V_{l-1}$
and
$t(e) = \beta_j\zeta \in V_{l}$.
We then have
\begin{align*}
s(e^g) & = (0,s(e)) = (0, \zeta),\\
t(e^g) & = (0 + \ell_{\Z_2}(e),t(e)) 
         = (0 + \ell(\alpha_j), \beta_j\zeta)
         = (j, \beta_j\zeta)
\end{align*}
so that 
\begin{align*}
\varphi_V(s(e^g)) &  =\varphi_V(0, \zeta) 
                     =\xi_{V}(\beta_2\zeta)
                     =\beta_2\xi_{V}( \zeta),\\
\varphi_V(t(e^g)) &  =\varphi_V(j, \beta_j \zeta) 
                     =\xi_{V}(\beta_j\beta_j\zeta)
                     =\beta_j\beta_2\xi_{V}(\zeta).
\end{align*}
Hence there exists a unique edge written 
$\varphi_E(e^g)$ in $E_{l,l+1}$
such that 
\begin{gather*}
s(\varphi_E(e^g))   =\beta_2\xi_V( \zeta),\qquad
t(\varphi_E(e^g))   =\beta_j\beta_2\xi_V( \zeta),\\
\lambda(\varphi_E(e^g)) =\alpha_j  =\varphi_\Sigma(\lambda^{\Z_2}(e^g)).
\end{gather*}

Case 4: $(g,\lambda(e)) = (1, \alpha_j), j=1,2$.

\noindent
There exists $\zeta \in B_{l-1}(\Sigma_+)$ 
such that 
$s(e) = \zeta \in V_{l-1}$
and
$t(e) = \beta_j\zeta \in V_{l}$.
We then have
\begin{align*}
s(e^g) & = (1,s(e)) = (1, \zeta),\\
t(e^g) & = (1 + \ell_{\Z_2}(e),t(e)) 
         = (1 + \ell(\alpha_j), \beta_1\zeta)
         = (1+j, \beta_j\zeta)
\end{align*}
so that 
\begin{align*}
\varphi_V(s(e^g)) &  =\varphi_V(1, \zeta) 
                     =\xi_{V}(\beta_1\zeta)
                     =\beta_1\xi_{V}( \zeta),\\
\varphi_V(t(e^g)) &  =\varphi_V(1+j, \beta_j \zeta) 
                     =\xi_{V}(\beta_{1+j}\beta_j\zeta)
                     =\beta_{1+j}\beta_1\xi_{V}(\zeta).
\end{align*}
Hence there exists a unique edge written 
$\varphi_E(e^g)$ in $E_{l,l+1}$
such that 
\begin{gather*}
s(\varphi_E(e^g))   =\beta_1\xi_V( \zeta),\qquad
t(\varphi_E(e^g))   =\beta_{1+j}\beta_1\xi_V( \zeta),\\
\lambda(\varphi_E(e^g)) =\alpha_{1+j} =\varphi_\Sigma(\lambda^{\Z_2}(e^g)).
\end{gather*}

It is routine to check that
the correspondence
$\varphi_E: E_{l-1,l}^{\Z_2} \rightarrow E_{l,l+1}$
satisfies the desired properties.  
\end{proof}

Let us denote by 
${\frak L}^{Ch(D_2)}_1$ the $\lambda$-graph system
$(V^{Ch(D_2)}_1, E^{Ch(D_2)}_1, {\lambda}^{Ch(D_2)}_1,{\iota}^{Ch(D_2)}_1)
$
obtained from
${\frak L}^{Ch(D_2)}$
by shifting ${\frak L}^{Ch(D_2)}$
upward in one-step.
Their vertices and edge sets are defined by
\begin{equation*}
V^{Ch(D_2)}_{1,l} = V^{Ch(D_2)}_{l+1}, \qquad
E^{Ch(D_2)}_{1,l,l+1} = E^{Ch(D_2)}_{l+1,l+2}, \qquad
l \in \Zp.
\end{equation*}
The maps ${\lambda}^{Ch(D_2)}_1,{\iota}^{Ch(D_2)}_1$
are induced from
${\lambda}^{Ch(D_2)},{\iota}^{Ch(D_2)}$
in a natural way, respectively.
We know the following proposition from the above lemma.
\begin{proposition}
The $\Z_2$-extension $({\frak L}^{Ch(D_2)})^{\Z_2,\ell}$ 
of the Cantor horizon $\lambda$-graph system
${\frak L}^{Ch(D_2)}$ by the function $\ell$
defined by \eqref{eq:dyckell}
is isomorphic to the $\lambda$-graph system 
${\frak L}^{Ch(D_2)}_1$ obtained from 
${\frak L}^{Ch(D_2)}$ by shifting upward in one-step
up to labeling.
\end{proposition}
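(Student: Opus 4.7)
The plan is to assemble the isomorphism directly from the data produced in the preceding lemma. That lemma gave three bijections $\varphi_V$, $\varphi_E$, and $\varphi_\Sigma$ already compatible with source, target, and labeling; what remains is to package them as an isomorphism of $\lambda$-graph systems (up to the relabeling by $\varphi_\Sigma$) from $({\frak L}^{Ch(D_2)})^{\Z_2,\ell}$ onto ${\frak L}^{Ch(D_2)}_1$.

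First I would match the levels. The shifted system ${\frak L}^{Ch(D_2)}_1$ has vertex set $V^{Ch(D_2)}_{l+1}$ at level $l$ and edge set $E^{Ch(D_2)}_{l+1,l+2}$ between levels $l$ and $l+1$, whereas the $\Z_2$-extension has $V_l^{\Z_2} = \Z_2 \times V_l$ and $E_{l,l+1}^{\Z_2} = \Z_2 \times E_{l,l+1}$. The bijections $\varphi_V : V_l^{\Z_2} \to V_{l+1}$ and $\varphi_E : E_{l-1,l}^{\Z_2} \to E_{l,l+1}$ of the previous lemma line up with this matching of levels, and the source/target/label compatibilities are the three displayed equations in that lemma; through $\varphi_\Sigma$ this packages $\varphi_E$ as a label-preserving bijection into the shifted system.

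The only condition left to verify is that $\varphi_V$ intertwines the $\iota$-maps, namely $\varphi_V \circ \iota^{\Z_2} = \iota \circ \varphi_V$. Unwinding, for $v = \beta_{\mu_1}\cdots\beta_{\mu_{l+1}} \in V_{l+1}$ and $g \in \Z_2$, one must compare $\varphi_V(g, \beta_{\mu_1}\cdots\beta_{\mu_l})$ with $\iota(\varphi_V(g, \beta_{\mu_1}\cdots\beta_{\mu_{l+1}}))$. Both sides are of the form $\xi_V$ applied to a word with fixed leading symbol $\beta_2$ or $\beta_1$ (according to $g=0$ or $g=1$), so the equality reduces to $\iota\circ\xi_V = \xi_V\circ\iota$ on such words.

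The main (and only genuinely nontrivial) step is to confirm that the rewriting $\xi_V$ is \emph{causal from the left}: from the explicit recipe, the symbol produced at position $i+1$ depends only on the already rewritten symbol at position $i$. Consequently, for any word $w_1\cdots w_n$, the first $n-1$ output symbols of $\xi_V(w_1\cdots w_n)$ coincide with $\xi_V(w_1\cdots w_{n-1})$, which is precisely the identity $\iota\circ\xi_V = \xi_V\circ\iota$; a short induction on the word length pins this down. Once this is established, the triple $(\varphi_V, \varphi_E, \varphi_\Sigma)$ assembled over all levels yields the desired isomorphism of $\lambda$-graph systems up to labeling, completing the proof.
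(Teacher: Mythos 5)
Your proposal is correct and follows essentially the same route as the paper, which likewise deduces the proposition directly from the preceding lemma's bijections $\varphi_V,\varphi_E,\varphi_\Sigma$ and their source/target/label compatibilities. You additionally spell out the $\iota$-intertwining $\varphi_V\circ\iota^{\Z_2}=\iota\circ\varphi_V$ via the left-to-right causality of the rewriting $\xi_V$ — a point the paper leaves implicit — and that verification is valid, since the symbol $\xi_V$ produces at each position depends only on the preceding (already rewritten) symbol, so $\xi_V$ commutes with deletion of the rightmost letter.
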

Let us denote by $\tau:D_2\rightarrow \Z_2$
the skewing function defined by
$\tau((x_n)_{n\in \Z}) =\ell(x_0)$ for 
$(x_n)_{n\in \Z}$.
By Theorem \ref{thm:G-conj},
the $\Z_2$-extension $D_2^{\Z_2,\tau}$
of $D_2$ by $\tau$ is presented by the 
the $\Z_2$-extension 
${\frak L}^{Ch(D_2),\ell}$ 
of the $\lambda$-graph system
${\frak L}^{Ch(D_2)}$ 
by $\ell$.
The map $\varphi_E:E_{l,l+1}^{\Z_2}$
induces a factor map from the 
$\Z_2$-extension 
$D_2^{\Z
_2,\tau}$
onto $D_2$.

\medskip

{\bf 2.}
Let $A=[A(i,j)]_{i,j=1}^N$ be an  $N\times N$ 
matrix with entries in nonnegative integers.
The matrix defines a finite directed graph $\G_A =(V_A, E_A)$
with vertex set $V_A$ having $N$ vertices
and edge set $E_A = \{ e_1,\dots,e_{N_A}\}$
 such that
the number of the edges from $v_i$ to $v_j$ is $A(i,j)$ for $i,j=1,\dots,N$.
The Markov--Dyck shifts are generalization of Dyck shifts, 
and their concatenating bracket rules 
come from the directed graph 
$\G_A$ (see \cite{HIK}, \cite{KrBLMS}, \cite{KMMunster}, \cite{MaMathScand2011}).
We will briefly review the Markov--Dyck shift $D_A$ for the matrix $A$.
Let us denote by $A^\G$
the $N_A \times N_A$ transition matrix defined by
\begin{equation}
A^\G(i,j) = 
\begin{cases}
1 & \text{ if } t(e_i) = s(e_j), \\
0 & \text{ otherwise}.
\end{cases} \label{eq:AG}
\end{equation} 
Let $S_i, i=1,\dots, N_A$ be a family of partial isometries 
satisfying the relations 
\begin{equation}
\sum_{j=1}^{N_A} S_j S_j^* =1,\qquad 
S_i^* S_i = \sum_{j=1}^{N_A} A^{\G}(i,j) S_j S_j^*,
\qquad i=1,\dots,N_A. \label{eq:CKAG}
\end{equation}
They are generating family of the Cuntz--Krieger algebra 
${\mathcal{O}}_{A^\G}$.
We set the symbols 
$$
\alpha_i = S_i^*, \qquad
\beta_i = S_i, \qquad i=1,\dots, N_A
$$
and
$\Sigma^- = \{\alpha_1,\dots,\alpha_{N_A}\}, \Sigma^+ = \{\beta_1,\dots,\beta_{N_A}\},
\Sigma = \Sigma^- \cup \Sigma^+.
$ 
Let ${\frak F}_A$ be the set of words $\gamma_1,\dots,\gamma_n$
of $\Sigma$ such that 
the  product $\gamma_1\cdots\gamma_n$ in the algebra ${\mathcal{O}}_{A^\G}$
is $0$.
The Markov--Dyck shift $D_A$ is defined by a subshift over $\Sigma$
whose forbidden words are ${\frak F}_A$.
If $A =[N]$ the $1 \times 1$ matrix with entry $N>1$.
The Markov--Dyck shift $D_A$ becomes the Dyck shift $D_N$.
If $A$ is irreducible and not any permutation matrix, 
the subshift $D_A$ is not sofic (\cite{MaMathScand2011}).  
Since 
$S_{i_1}\cdots S_{i_n} \ne 0$ if and only if 
$A^{\G}(i_1,i_2)\cdots A^{\G}(i_{n-1},i_n) \ne 0$,
the subset of $D_A$ consisting of biinfinite sequences of $\Sigma^+$
forms the SFT $\Lambda_A$ for the original matrix $A$.   
Similarly the subset $D_A$ consisting of biinfinite sequences of $\Sigma^-$
forms the SFT $\Lambda_{A^t}$ for the transposed matrix $A^t$ of $A$.
 Hence the Markov--Dyck shift $D_A$
contains the two SFTs $\Lambda_A$ and $\Lambda_{A^t}$
 as its subsystems.

Let $G$ be a finite group.
Suppose that maps
$\ell_{-}: \Sigma^- \longrightarrow G,\,
 \ell_{+}: \Sigma^+ \longrightarrow G$ 
are given, so that the map combined with them
$\ell: \Sigma \longrightarrow G$
is obtained.
Define
$\tau_{\ell}: D_A \longrightarrow G$
by setting
$\tau_{\ell}((\gamma_n)_{n \in \Z}) = \ell(\gamma_0)$.
As in Section 3, the skewing function 
$\tau_{\ell}$ defines a $G$-subshift
$D_A^{G,\tau_{\ell}} (= (G\ltimes D_A, \tau_{\ell}\ltimes \sigma_{D_A})).$
Since both of the SFTs $\Lambda_A$ and $\Lambda_{A^t}$
are subsystems of $D_A$, the restrictions
of $\tau_\ell$ to them yield their skewing functions written
 $\tau_{\ell_A}$ and $\tau_{\ell_{A^t}}$, respectively,
 so that we have two $G$-SFTs   
$\Lambda_A^{G,\tau_{\ell_A}}$ and $\Lambda_{A^t}^{G,\tau_{\ell_{A^t}}}$
from the skewing functions.

Let $A'$ be another irreducible square 
matrix with entries in nonnegative integers.
We similarly have a directed graph $\G_{A'} =(V_{A'}, E_{A'})$
and symbol sets 
$
{\Sigma'}^- = \{\alpha'_1,\dots,\alpha'_{N_{A'}}\}, 
{\Sigma'}^+ = \{\beta'_1,\dots,\beta'_{N_{A'}}\},
\Sigma' = {\Sigma'}^- \cup {\Sigma'}^+.
$ 
We have the  Markov--Dyck shift $D_{A'}$ 
and two SFTs $\Lambda_{A'}$ and $\Lambda_{{A'}^t}$ as its subsystems.
W. Krieger has shown that the following classification result for Markov--Dyck shifts.
\begin{proposition}[{\cite[Corollary 3.2]{KrBLMS}}, cf. \cite{HIK}]
Let $A$ and $A'$ be two irreducible square matrices with entries in nonnegative integers.
Assume that each vertex of both of their directed graphs 
$\G_A$ and $\G_{A'}$ has  at least two in-coming edges.
Then the Markov--Dyck shifts 
$D_A$ and $D_{A'}$ are topologically conjugate 
if and only if the directed graphs  
$\G_A$ and $\G_{A'}$  are isomorphic as directed graphs.
\end{proposition}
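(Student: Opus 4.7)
The plan is to prove the $(\Leftarrow)$ direction directly from the Cuntz--Krieger defining relations, and the $(\Rightarrow)$ direction by first locating the ordered pair of embedded SFTs $(\Lambda_A, \Lambda_{A^t})$ intrinsically inside $D_A$, then using the ambient bracket structure to upgrade a conjugacy of edge SFTs to a graph isomorphism.

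For the $(\Leftarrow)$ direction, a directed graph isomorphism $\phi:\G_A\to\G_{A'}$ consists of bijections $\phi_V:V_A\to V_{A'}$ and $\phi_E:E_A\to E_{A'}$ preserving source and target, which intertwine the transition matrices from \eqref{eq:AG}. The alphabet bijection $\hat\phi:\Sigma\to\Sigma'$ defined by $\alpha_i\mapsto\alpha'_{\phi_E(i)}$, $\beta_i\mapsto\beta'_{\phi_E(i)}$ therefore respects the relations \eqref{eq:CKAG}, so a word of $\Sigma$ lies in ${\frak F}_A$ if and only if its $\hat\phi$-image lies in ${\frak F}_{A'}$, giving a symbol-renaming topological conjugacy $D_A\cong D_{A'}$.

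For the $(\Rightarrow)$ direction, let $\Phi:D_A\to D_{A'}$ be a topological conjugacy. Under the standing hypothesis that each vertex of $\G_A$ has at least two incoming edges, $\G_A$ is irreducible and not a permutation matrix, so $D_A$ is non-sofic while $\Lambda_A$ and $\Lambda_{A^t}$ are SFTs. The first step is to characterize the subset $\Lambda_A\cup\Lambda_{A^t}\subset D_A$ intrinsically, for instance as the maximal closed shift-invariant subset of $D_A$ on which no bracket cancellation of the form \eqref{eq:D2} (read inside the algebra ${\mathcal O}_{A^\G}$) is witnessed by any two-sided block of any point; its two connected or maximally-irreducible pieces recover the unordered pair $\{\Lambda_A,\Lambda_{A^t}\}$ dynamically. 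Since $\Phi$ preserves all such invariants, it carries $\{\Lambda_A,\Lambda_{A^t}\}$ to $\{\Lambda_{A'},\Lambda_{{A'}^t}\}$, and after replacing $A'$ by ${A'}^t$ if needed (an operation preserving the conclusion of the statement) we may assume $\Phi(\Lambda_A)=\Lambda_{A'}$.

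The second step is to upgrade the conjugacy of edge SFTs $\Phi|_{\Lambda_A}:\Lambda_A\to\Lambda_{A'}$ to a graph isomorphism. For each $\beta_i\in\Sigma^+$ the matched opening bracket $\alpha_i\in\Sigma^-$ is singled out intrinsically in $D_A$ as the unique $\alpha\in\Sigma^-$ for which $\alpha\beta_j$ is admissible only when $j=i$; this follows from $S_i^*S_j=0$ for $i\ne j$ in ${\mathcal O}_{A^\G}$, a consequence of \eqref{eq:CKAG} that I would verify directly. This bracket pairing is manifestly preserved by $\Phi$, so $\Phi$ induces compatible bijections on $\Sigma^+$ and $\Sigma^-$. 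Now, under the $\ge 2$-incoming-edges hypothesis, the edges of $\G_A$ sharing a common target vertex $v$ are exactly those symbols $\beta_i$ with a common right follower set in $\Lambda_A$, and this equivalence relation cannot be refined by any higher-block recoding; hence $V_A$ is recovered from $\Lambda_A$ as the quotient by that equivalence, and the source--target data of $\G_A$ is recovered from 2-block admissibility of $\beta_i\beta_j$, producing the required graph isomorphism $\G_A\cong\G_{A'}$.

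The main obstacle lies squarely in the first step: neither $\Sigma^+$ nor $\Sigma^-$ is a priori a dynamical object of the abstract subshift $D_A$, so ruling out that $\Phi$ mixes them in an exotic fashion beyond the overall transpose swap requires an intrinsic extraction of the positive/negative decomposition from $D_A$ alone. Once this separation is in hand, the graph reconstruction in the second step is comparatively routine, but it depends crucially on the $\ge 2$ incoming-edges hypothesis to prevent vertices of $\G_A$ from being collapsed under merged amalgamations of the associated $\lambda$-graph system.
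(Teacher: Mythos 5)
First, note that the paper does not prove this proposition at all: it is quoted verbatim from Krieger \cite[Corollary 3.2]{KrBLMS}, and the only thing the paper later extracts from Krieger's argument is the fact that any such conjugacy $\Phi$ is induced by a one-block map. Krieger's actual route is to attach to $D_A$ a conjugacy-invariant inverse semigroup (essentially the graph inverse semigroup of $\G_A$) and to show that, under the hypothesis that every vertex has at least two incoming edges, this invariant recovers $\G_A$ up to graph isomorphism. Your $(\Leftarrow)$ direction is fine, but your $(\Rightarrow)$ direction has a gap that is not just the one you flag.

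The fatal step is the second one. A topological conjugacy of edge SFTs $\Lambda_A\to\Lambda_{A'}$ does \emph{not} determine the graphs up to isomorphism: conjugacy of edge shifts is strong shift equivalence of the matrices, which is strictly weaker than graph isomorphism. Concretely, take $A=[2]$ and $A'=\left(\begin{smallmatrix}1&1\\1&1\end{smallmatrix}\right)$; both graphs have every vertex with at least two incoming edges, $A=\left(\begin{smallmatrix}1\\1\end{smallmatrix}\right)^{t}\left(\begin{smallmatrix}1\\1\end{smallmatrix}\right)$ shows $\Lambda_A\cong\Lambda_{A'}$, yet $\G_A\not\cong\G_{A'}$ (and, by the proposition itself, $D_A\not\cong D_{A'}$). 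So your claim that the ``common target vertex'' equivalence on symbols ``cannot be refined by any higher-block recoding,'' and hence that $V_A$ is recovered from the abstract SFT $\Lambda_A$, is false; the restriction $\Phi|_{\Lambda_A}$ is a priori an arbitrary sliding-block code, not a one-block map, and the distinguishing information must come from the ambient bracket-matching structure of $D_A$ forcing $\Phi$ to be (essentially) one-block --- which is exactly the content of Krieger's proof and is nowhere established in your argument. The first step has the same defect, which you acknowledge but do not repair: ``no bracket cancellation of the form \eqref{eq:D2} is witnessed by a block'' and ``$\alpha\beta_j$ is admissible only when $j=i$'' are properties of the chosen alphabet and presentation, not of the abstract dynamical system, so neither the subset $\Lambda_A\cup\Lambda_{A^t}$ nor the bracket pairing is known to be preserved by an arbitrary conjugacy until an intrinsic (invariant) characterization is actually supplied. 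As written, the proposal reduces the theorem to a statement (graph reconstruction from the conjugacy class of the edge SFT) that is simply not true.
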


For matrix $A'$, suppose that maps
$\ell'_{-}: {\Sigma'}^- \longrightarrow G,\,
 \ell'_{+}: {\Sigma'}^+ \longrightarrow G$ 
are given, so that the map combined with them
$\ell': \Sigma \longrightarrow G$
is obtained, and we have 
a $G$-subshift
$D_{A'}^{G,\tau_{\ell'}} $
and two $G$-SFTs   
$\Lambda_{A'}^{G,\tau_{\ell_{A'}}}$ and 
$\Lambda_{{A'}^t}^{G,\tau_{\ell_{{A'}^t}}}$
as seen in the preceding discussions.
By using the above proposition,
we see the following lemma.
\begin{lemma}
Assume that each vertex of both of the directed  graphs 
$\G_A$ and $\G_{A'}$ has  at least two in-coming edges.
If there exists a topological conjugacy 
$\Phi: D_A \longrightarrow D_{A'}$ such that 
$\tau_\ell$ is cohomologous to $\tau_{\ell'} \circ \Phi$ in $C(D_A,G)$,
then there exists a topological conjugacies 
$\Phi_+: \Lambda_A \longrightarrow \Lambda_{A'}$ 
and
$\Phi_-: \Lambda_{A^t} \longrightarrow \Lambda_{A'^t}$ 
such that 
$\tau_{\ell_A}$ on $\Lambda_A$ is cohomologous to $\tau_{\ell'_{A'}} \circ \Phi_+$ in $C(\Lambda_A,G)$
and
$\tau_{\ell_{A^t}}$on $\Lambda_{A^t}$  is cohomologous to $\tau_{\ell'_{{A'}^t}} \circ \Phi_-$ in $C(\Lambda_{A^t},G).$
\end{lemma}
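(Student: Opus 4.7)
My plan is to obtain the required conjugacies $\Phi_+,\Phi_-$ by restricting the given conjugacy $\Phi:D_A\to D_{A'}$ to the SFT subsystems $\Lambda_A,\Lambda_{A^t}\subset D_A$ and then to restrict the given cocycle identity on $D_A$. Since $\tau_\ell|_{\Lambda_A}=\tau_{\ell_A}$, $\tau_{\ell'}|_{\Lambda_{A'}}=\tau_{\ell'_{A'}}$, $\sigma_{D_A}|_{\Lambda_A}=\sigma_{\Lambda_A}$, and similarly for the $(\cdot)^t$ side, once the geometric step of matching the SFT subsystems under $\Phi$ is carried out, the cohomological conclusion follows automatically by restricting the cocycle to these invariant closed subsets.

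The geometric step is the first task. Under the hypothesis that each vertex of $\G_A$ and $\G_{A'}$ has at least two incoming edges, both $D_A$ and $D_{A'}$ are non-sofic by \cite{MaMathScand2011}, whereas each of $\Lambda_A,\Lambda_{A^t},\Lambda_{A'},\Lambda_{{A'}^t}$ is an irreducible SFT. These four SFTs admit an intrinsic topological-dynamical characterization inside the ambient Markov--Dyck shift (for instance as the maximal closed shift-invariant subsets whose orbit-closures are all sofic), and such a characterization is preserved under topological conjugacy; hence $\Phi$ must send $\Lambda_A\cup\Lambda_{A^t}$ onto $\Lambda_{A'}\cup\Lambda_{{A'}^t}$. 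Irreducibility then forces $\Phi$ to map each of $\Lambda_A,\Lambda_{A^t}$ onto exactly one of $\Lambda_{A'},\Lambda_{{A'}^t}$. Krieger's classification, stated in the proposition immediately preceding the lemma, yields $\G_A\cong\G_{A'}$ as directed graphs and hence $\Lambda_A\cong\Lambda_{A'}$ and $\Lambda_{A^t}\cong\Lambda_{{A'}^t}$ as SFTs; in the case that $\Phi$ happens to swap $\Lambda_A$ with $\Lambda_{{A'}^t}$, one composes the restriction of $\Phi$ with such a graph isomorphism to recover a conjugacy into the correct target. After this adjustment we may assume $\Phi(\Lambda_A)=\Lambda_{A'}$ and $\Phi(\Lambda_{A^t})=\Lambda_{{A'}^t}$ and simply set $\Phi_+:=\Phi|_{\Lambda_A}$ and $\Phi_-:=\Phi|_{\Lambda_{A^t}}$.

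The hypothesis supplies a continuous $\gamma:D_A\to G$ with
\begin{equation*}
\tau_\ell(x)=\gamma(x)\,\tau_{\ell'}(\Phi(x))\,\gamma(\sigma_{D_A}(x))^{-1},\qquad x\in D_A.
\end{equation*}
Restricting this identity to $x\in\Lambda_A$ and using the identifications above, the function $\gamma_+:=\gamma|_{\Lambda_A}:\Lambda_A\to G$ satisfies $\tau_{\ell_A}(x)=\gamma_+(x)\,\tau_{\ell'_{A'}}(\Phi_+(x))\,\gamma_+(\sigma_{\Lambda_A}(x))^{-1}$ for all $x\in\Lambda_A$, which is exactly the desired cohomology relation in $C(\Lambda_A,G)$. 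The analogous restriction to $\Lambda_{A^t}$, with $\gamma_-:=\gamma|_{\Lambda_{A^t}}$, gives the remaining half of the conclusion. The main obstacle in this plan is the first task: producing a genuinely intrinsic, conjugacy-invariant description of $\Lambda_A\cup\Lambda_{A^t}$ within the non-sofic shift $D_A$. The proposed sofic-orbit-closure formulation is the natural candidate, but making it fully rigorous likely requires invoking the structural arguments underlying \cite[Corollary 3.2]{KrBLMS} and \cite{MaMathScand2011}; once the matching is established, the cocycle-restriction step is entirely routine.
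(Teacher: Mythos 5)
Your overall architecture coincides with the paper's: both arguments produce $\Phi_{\pm}$ by restricting $\Phi$ to the SFT subsystems and then restrict the transfer function $\gamma$ witnessing the cohomology on $D_A$, and that second step is indeed routine and you carry it out correctly. The gap is in the geometric matching step, which is the entire content of the lemma. The paper obtains it by quoting the structure of Krieger's argument for \cite[Corollary 3.2]{KrBLMS}: the conjugacy $\Phi$ is induced by a one-block map that carries $\Lambda_A$ onto $\Lambda_{A'}$ and $\Lambda_{A^t}$ onto $\Lambda_{{A'}^t}$ (openers to openers, closers to closers), so $\Phi_{\pm}$ are literal restrictions. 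Your substitute --- an intrinsic characterization of $\Lambda_A\cup\Lambda_{A^t}$ as a maximal closed invariant set all of whose orbit closures are sofic --- fails. Since $\beta_i\alpha_i=S_iS_i^*\ne 0$ and $\alpha_i\beta_i=S_i^*S_i\ne 0$, the periodic point $(\beta_1\alpha_1)^{\infty}$ lies in $D_A$; its orbit is finite, hence sofic, and it belongs to neither $\Lambda_A$ nor $\Lambda_{A^t}$ because it mixes symbols of $\Sigma^+$ and $\Sigma^-$. Adjoining this orbit to $\Lambda_A\cup\Lambda_{A^t}$ gives a strictly larger closed invariant set with the same property, so no such maximality characterization can single out the two SFT subsystems, and conjugacy-invariance cannot be extracted this way. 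You yourself note that a rigorous version would need the structural arguments behind \cite{KrBLMS} and \cite{MaMathScand2011}; that concession is the proof, and it is exactly what the paper invokes.

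Your fallback for the possible ``swap'' is also not viable. If $\Phi$ carried $\Lambda_A$ onto $\Lambda_{{A'}^t}$, post-composing with a graph isomorphism would not repair the statement: $\Lambda_{A'}$ and $\Lambda_{{A'}^t}$ need not be topologically conjugate at all (Krieger's theorem gives $\G_A\cong\G_{A'}$, hence $\Lambda_A\cong\Lambda_{A'}$ and $\Lambda_{A^t}\cong\Lambda_{{A'}^t}$, but says nothing relating $\Lambda_{A'}$ to $\Lambda_{{A'}^t}$), and even when they are, the restricted cocycle identity would relate $\tau_{\ell_A}$ to $\tau_{\ell'_{{A'}^t}}$ composed with your map rather than to $\tau_{\ell'_{A'}}\circ\Phi_+$ as the lemma asserts; the skewing function on the target is the wrong one and cannot be exchanged by post-composition. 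The correct resolution is that the one-block form of $\Phi$ coming from Krieger's proof already respects the $\Sigma^+/\Sigma^-$ decomposition, so the swap does not occur.
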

\begin{proof}
By the preceding proposition and its proof in \cite[Corollary 3.2]{KrBLMS},
the conjugacy
$\Phi: D_A \longrightarrow D_{A'}$ 
is induced by a one-block map which gives rise to topological conjugacies
$\Lambda_A \longrightarrow \Lambda_{A'}$ 
and
$\Lambda_{A^t} \longrightarrow \Lambda_{A'^t}$ 
of its subsystems as SFTs.
They are given by restricting $\Phi$ to 
$\Lambda_A $ and $\Lambda_{A'}$, respectively,
so that
$\tau_{\ell_A}$ on $\Lambda_A$ is cohomologous to $\tau_{\ell'_{A'}} \circ \Phi_+$ in $C(\Lambda_A,G)$
and
$\tau_{\ell_{A^t}}$ on $\Lambda_{A^t}$
 is cohomologous to $\tau_{\ell'_{{A'}^t}} \circ \Phi_-$ in $C(\Lambda_{A^t},G).$
\end{proof}
We thus have  
\begin{proposition}
Assume that each vertex of both of the directed graphs 
$\G_A$ and $\G_{A'}$ has  at least two in-coming edges.
If $G$-subshifts 
$D_A^{G,\tau_{\ell}}$ and 
$D_{A'}^{G,\tau_{\ell'}} $
are $G$-conjugate,
then 
the two $G$-SFTs   
$\Lambda_A^{G,\tau_{\ell_A}}$ and $\Lambda_{A'}^{G,\tau_{\ell'_{A'}}}$
are $G$-conjugate, 
and also 
 the two $G$-SFTs   
$\Lambda_{A^t}^{G,\tau_{\ell_{A^t}}}$ and $\Lambda_{{A'}^t}^{G,\tau_{\ell'_{{A'}^t}}}$
are $G$-conjugate.
\end{proposition}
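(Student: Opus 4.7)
The plan is to view this proposition as a direct corollary of the preceding lemma combined with Proposition \ref{prop:G-actioncoho}, applied twice (once to pass from $G$-conjugacy to a skewing-cohomology statement, and once to pass back).

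First, I would start from the hypothesis that $D_A^{G,\tau_\ell}$ and $D_{A'}^{G,\tau_{\ell'}}$ are $G$-conjugate. By the equivalence $(1)\Longleftrightarrow(2)$ of Proposition \ref{prop:G-actioncoho} applied to the subshifts $D_A$ and $D_{A'}$ with skewing functions $\tau_\ell$ and $\tau_{\ell'}$, this produces a topological conjugacy $\Phi\colon D_A\to D_{A'}$ together with a continuous function $\gamma\colon D_A\to G$ witnessing that $\tau_\ell$ is cohomologous to $\tau_{\ell'}\circ\Phi$ in $C(D_A,G)$.

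Second, I invoke the preceding lemma (the restriction lemma, using Krieger's classification \cite[Corollary 3.2]{KrBLMS}): since each vertex of $\G_A$ and $\G_{A'}$ has at least two in-coming edges, $\Phi$ is a one-block conjugacy that preserves the natural SFT subsystems $\Lambda_A\subset D_A$ and $\Lambda_{A^t}\subset D_A$. Restricting $\Phi$ to these subsystems yields topological conjugacies $\Phi_+\colon\Lambda_A\to\Lambda_{A'}$ and $\Phi_-\colon\Lambda_{A^t}\to\Lambda_{{A'}^t}$. Since restriction preserves the cohomology relation (restrict $\gamma$ to $\Lambda_A$ and to $\Lambda_{A^t}$ respectively, and use that $\tau_\ell$ restricts to $\tau_{\ell_A}$ on $\Lambda_A$ and to $\tau_{\ell_{A^t}}$ on $\Lambda_{A^t}$), we obtain that $\tau_{\ell_A}$ is cohomologous to $\tau_{\ell'_{A'}}\circ\Phi_+$ in $C(\Lambda_A,G)$, and similarly that $\tau_{\ell_{A^t}}$ is cohomologous to $\tau_{\ell'_{{A'}^t}}\circ\Phi_-$ in $C(\Lambda_{A^t},G)$.

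Finally, I apply Proposition \ref{prop:G-actioncoho} in the converse direction $(2)\Longrightarrow(1)$, now to the SFT pairs $(\Lambda_A,\tau_{\ell_A})$, $(\Lambda_{A'},\tau_{\ell'_{A'}})$ and $(\Lambda_{A^t},\tau_{\ell_{A^t}})$, $(\Lambda_{{A'}^t},\tau_{\ell'_{{A'}^t}})$. The data $(\Phi_+,\gamma|_{\Lambda_A})$ and $(\Phi_-,\gamma|_{\Lambda_{A^t}})$ produced in the previous step yield $G$-conjugacies $\Lambda_A^{G,\tau_{\ell_A}}\cong\Lambda_{A'}^{G,\tau_{\ell'_{A'}}}$ and $\Lambda_{A^t}^{G,\tau_{\ell_{A^t}}}\cong\Lambda_{{A'}^t}^{G,\tau_{\ell'_{{A'}^t}}}$, which is the conclusion. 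The only substantive step here is the restriction lemma that was proved just above; the remainder of the argument is formal and presents no obstacle, so I expect no hard part beyond citing the appropriate statements in the correct order.
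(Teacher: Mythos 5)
Your proof is correct and follows essentially the same route as the paper: the paper's own proof is a one-line appeal to the preceding restriction lemma together with the equivalence between $G$-conjugacy of extensions and cohomology of skewing functions, which is exactly the chain you spell out (and your citation of Proposition \ref{prop:G-actioncoho} for that equivalence is in fact the more precise reference than the paper's terse pointer to Theorem \ref{thm:G-conj}).
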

\begin{proof}
The assertion follows from Theorem \ref{thm:G-conj}.
\end{proof}
\begin{corollary}\label{cor:MDG}
Let $A$ be an $N\times N$ irreducible matrix with entries in nonnegative integers
such that $\sum_{i=1}^N A(i,j) \ge 2$ for each $j=1,\dots,N$.
Then the $G$-conjugacy class of $G$-SFT
  $\Lambda_A^{G,\tau_{\ell_A}}$
is an invariant of the $G$-conjugacy class of $G$-subshift 
$D_A^{G,\tau_{\ell}}$ of the Markov--Dyck shift $D_A$. 
Hence for maps $\ell^1, \ell^2: \Sigma^- \cup \Sigma^+ \longrightarrow G$, 
if two $G$-SFTs   
$\Lambda_A^{G,\tau_{\ell^1_A}}$ and $\Lambda_A^{G,\tau_{\ell^2_A}}$
are not $G$-conjugate,
then
the $G$-subshifts 
$D_A^{G,\tau_{\ell^1}}$ and 
$D_A^{G,\tau_{\ell^2}} $
are not $G$-conjugate.
\end{corollary}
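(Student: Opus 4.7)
The plan is to derive the corollary as a direct specialization of the preceding proposition (to the case $A' = A$) together with a translation of the column-sum hypothesis into the in-coming-edge hypothesis used there. First I would verify that the combinatorial assumption matches. Since $A(i,j)$ counts the directed edges from $v_i$ to $v_j$ in $\mathcal{G}_A$, the column sum $\sum_{i=1}^N A(i,j)$ equals the number of edges terminating at $v_j$. Hence the hypothesis $\sum_{i=1}^N A(i,j) \ge 2$ for each $j$ is exactly the statement that every vertex of $\mathcal{G}_A$ has at least two in-coming edges, which is the hypothesis required by the preceding proposition (applied with $A' = A$, so both graphs automatically satisfy the condition).

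Next I would spell out the first assertion. Given any map $\ell:\Sigma^- \cup \Sigma^+ \to G$, we form the skewing functions $\tau_\ell$ on $D_A$ and $\tau_{\ell_A}$ on the SFT $\Lambda_A \subset D_A$ as in the paragraph preceding the lemma. Taking $A' = A$ and $\ell' = \ell$ in the proposition shows that any $G$-conjugacy between $D_A^{G,\tau_\ell}$ and $D_A^{G,\tau_{\ell'}}$ descends to a $G$-conjugacy between $\Lambda_A^{G,\tau_{\ell_A}}$ and $\Lambda_A^{G,\tau_{\ell'_A}}$. This is precisely the statement that the $G$-conjugacy class of $\Lambda_A^{G,\tau_{\ell_A}}$ is an invariant of the $G$-conjugacy class of $D_A^{G,\tau_\ell}$.

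Finally I would obtain the second assertion by contraposition. Specializing to $\ell = \ell^1$ and $\ell' = \ell^2$, the invariance just established says: if $D_A^{G,\tau_{\ell^1}}$ and $D_A^{G,\tau_{\ell^2}}$ are $G$-conjugate, then so are $\Lambda_A^{G,\tau_{\ell^1_A}}$ and $\Lambda_A^{G,\tau_{\ell^2_A}}$. Contrapositively, if the two $G$-SFTs $\Lambda_A^{G,\tau_{\ell^1_A}}$ and $\Lambda_A^{G,\tau_{\ell^2_A}}$ are not $G$-conjugate, then the two $G$-subshifts $D_A^{G,\tau_{\ell^1}}$ and $D_A^{G,\tau_{\ell^2}}$ cannot be $G$-conjugate either.

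There is essentially no obstacle here since all the hard work has been carried out in the preceding proposition (which in turn relied on Krieger's classification of Markov--Dyck shifts in \cite{KrBLMS} and Theorem \ref{thm:G-conj}). The only substantive point to check is the combinatorial translation between column sums of $A$ and in-degrees in $\mathcal{G}_A$; after that, the corollary is a direct application of the proposition followed by the contrapositive.
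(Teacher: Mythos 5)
Your argument is correct and is essentially the paper's own (the paper states Corollary \ref{cor:MDG} without a separate proof, treating it exactly as you do: the column-sum condition is the in-degree condition, and the claim follows by applying the preceding proposition with $A'=A$ and two labeling maps, then taking the contrapositive). The only cosmetic slip is the phrase ``$\ell'=\ell$'' where you clearly intend $A'=A$ with $\ell$ and $\ell'$ allowed to differ, as your subsequent sentences make plain.
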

In the remainder of this section, 
we will give an example of a pair of $G$-extensions of  a non sofic subshift  which are not $G$-conjugate
by using Corollary \ref{cor:MDG}.
Let $A =[2]$ the $1\times 1$ matrix with its entry $2$.
The associated directed graph $\G_{[2]} =(V_{[2]}, E_{[2]})$ has one vertex $v$ 
and two directed self-loops $e_1, e_2$ around $v$, so that $V_{[2]} =\{v\}, E_{[2]} =\{ e_1, e_2\}$.
The subshift $\Lambda_A$ of the directed graph is the full $2$-shift $\Lambda_{[2]}$.
Let $G = \Z_2 =\{0,1\}$.
 Define $\ell^i:E_{[2]} \longrightarrow \Z_2$ for $i=1,2$ by setting
\begin{equation*}
\ell^1(e_1) = \ell^1(e_2) = 0, \qquad
\ell^2(e_1) = 1, \quad
\ell^2(e_2) = 0.
\end{equation*} 
We will first see that the maps
$\tau_{\ell^1}, \tau_{\ell^2} : \Lambda_{[2]} \longrightarrow \Z_2$
defined by $\tau_{\ell^i}((x_n)_{n \in \Z}) = \ell^i(x_0), i=1,2$
are not cohomologous  to each other.
Suppose that there exists a continuous function 
$\eta: \Lambda_{[2]} \longrightarrow \Z_2$
such that 
\begin{equation*}
\tau_{\ell^2}(x) = \eta(x) + \tau_{\ell^1}(x) -\eta(\sigma_{[2]}(x)), \qquad 
x =(x_n)_{n \in \Z} \in \Lambda_{[2]}.
\end{equation*} 
Take the element $x=(x_n)_{n \in \Z} \in \Lambda_{[2]}$ 
such that $ x_n = e_1, n\in \Z$, so that
we have
$\tau_{\ell^2}(x) =\ell^2(x_0) = \ell^2(e_1) =1$
and
$\tau_{\ell^1}(x) = \ell^1(x_0) =\ell^1(e_1) =0.$
Hence we have 
$\eta(x) + \tau_{\ell^1}(x) -\eta(\sigma_{[2]}(x)) = \eta(x) -\eta(x) =0$,
a contradiction.
Hence $\tau_{\ell^1}$ is not cohomologous to $\tau_{\ell^2}$.
By Theorem \ref{thm:BS},
the G-SFTs 
$\Lambda_{[2]}^{\Z_2,\tau_{\ell^1}}$ and
$\Lambda_{[2]}^{\Z_2,\tau_{\ell^2}}$
are not $G$-conjugate.
Let us consider the Markov--Dyck shift 
$D_A$ for $A =[2]$. 
In this case,
it is the Dyck shift $D_2$.
Define
$\ell^i_{-}: \Sigma^- \longrightarrow \Z_2
$
and
$\ell^i_{+}: \Sigma^- \longrightarrow \Z_2
$
for $i=1, 2$ by setting
\begin{gather}
\ell^1_{-}(\alpha_1) = 
\ell^1_{-}(\alpha_2) = 
\ell^1_{+}(\beta_1) = 
\ell^1_{+}(\beta_2) = 0, \label{eq:l1}\\
\ell^2_{-}(\alpha_1) = 1, \qquad
\ell^2_{-}(\alpha_2) = 0, \qquad
\ell^2_{+}(\beta_1) = 1, \qquad
\ell^2_{+}(\beta_2) = 0. \label{eq:l2}
\end{gather} 
The restrictions of the skewing functions 
$\tau_{\ell^i}$ on $D_A$ to the full $2$-shift $\Lambda_{[2]}$
are the same as the above skewing functions 
$\tau_{\ell^i}$ on $\Lambda_{[2]}$.
Since the G-SFTs 
$\Lambda_{[2]}^{\Z_2,\tau_{\ell^1}}$ and
$\Lambda_{[2]}^{\Z_2,\tau_{\ell^2}}$
are not $G$-conjugate,
Corollary \ref{cor:MDG} ensures us
the following Proposition.
\begin{proposition} \label{prop:7.7}
Let $\ell^1, \ell^2: \{\alpha_1,\alpha_2\}\cup \{\beta_1,\beta_2\} \rightarrow \Z_2$
be the map defined by \eqref{eq:l1}, \eqref{eq:l2},
and $\tau_{\ell^1}, \tau_{\ell^2}$ the associated skewing functions on the Dyck shift $D_2$.
Then the $G$-subshifts
$D_2^{\Z_2,\tau_{\ell^1}}$
and
$D_2^{\Z_2,\tau_{\ell^2}}$
are not $G$-conjugate for $G=\Z_2$.
\end{proposition}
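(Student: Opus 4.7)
The plan is to invoke Corollary~\ref{cor:MDG} with $A=[2]$. The graph $\G_{[2]}$ has a single vertex with two self-loops, so the in-degree hypothesis of the corollary holds, and the SFT $\Lambda_A$ there is simply the full $2$-shift $\Lambda_{[2]}$ on the alphabet $\Sigma^+=\{\beta_1,\beta_2\}$. Moreover the restriction of $\tau_{\ell^i}$ to the $\Sigma^+$-subsystem $\Lambda_{[2]}\subset D_2$ is exactly the skewing function on $\Lambda_{[2]}$ induced by $\ell^i_+$. It therefore suffices to show that the two $\Z_2$-SFTs $\Lambda_{[2]}^{\Z_2,\tau_{\ell^1_+}}$ and $\Lambda_{[2]}^{\Z_2,\tau_{\ell^2_+}}$ are not $\Z_2$-conjugate.

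By Theorem~\ref{thm:BS}, this reduces to verifying that for every topological self-conjugacy $\varPhi$ of $\Lambda_{[2]}$, the cocycles $\tau_{\ell^1_+}$ and $\tau_{\ell^2_+}\circ\varPhi$ are not cohomologous in $C(\Lambda_{[2]},\Z_2)$. The standard obstruction uses fixed points of the shift: if $\tau_{\ell^1_+}(x)=\eta(x)+\tau_{\ell^2_+}(\varPhi(x))-\eta(\sigma(x))$ for some continuous $\eta$, then evaluating at any shift-fixed point $p$ collapses the coboundary and forces $\tau_{\ell^1_+}(p)=\tau_{\ell^2_+}(\varPhi(p))$. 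The full $2$-shift has exactly two fixed points $p_1=(\beta_1)^{\Z}$ and $p_2=(\beta_2)^{\Z}$, and any conjugacy $\varPhi$ permutes $\{p_1,p_2\}$ because these are the only shift-fixed points. Direct computation from \eqref{eq:l1} and \eqref{eq:l2} gives
\[
\{\tau_{\ell^1_+}(p_1),\,\tau_{\ell^1_+}(p_2)\}=\{0,0\},\qquad
\{\tau_{\ell^2_+}(p_1),\,\tau_{\ell^2_+}(p_2)\}=\{1,0\}
\]
as multisets in $\Z_2$, so the fixed-point constraint fails under every possible restriction of $\varPhi$ to $\{p_1,p_2\}$.

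The point that requires care is the \emph{universal} quantification over all topological conjugacies $\varPhi$ built into Theorem~\ref{thm:BS}; the fixed-point multiset invariant above handles this uniformly, whereas checking only $\varPhi=\id$ would rule out a single conjugacy. Once the two $\Z_2$-SFTs on $\Lambda_{[2]}$ are shown to be non-$\Z_2$-conjugate, Corollary~\ref{cor:MDG} immediately lifts this conclusion to non-$\Z_2$-conjugacy of the $\Z_2$-subshifts $D_2^{\Z_2,\tau_{\ell^1}}$ and $D_2^{\Z_2,\tau_{\ell^2}}$, completing the argument.
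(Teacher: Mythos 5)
Your proof is correct and follows essentially the same route as the paper: reduce via Corollary~\ref{cor:MDG} to the two $\Z_2$-SFTs on the full $2$-shift $\Lambda_{[2]}$, then rule out $\Z_2$-conjugacy through Theorem~\ref{thm:BS} by a cohomology obstruction at shift-fixed points. Your treatment is in fact slightly more complete than the printed one: the paper only evaluates the coboundary identity at the single fixed point $(e_1)^{\Z}$ (which, as written, only excludes the identity conjugacy), whereas your multiset comparison of the values of $\tau_{\ell^1_+}$ and $\tau_{\ell^2_+}$ on both fixed points handles the universal quantification over all self-conjugacies $\varPhi$ of $\Lambda_{[2]}$ required by Theorem~\ref{thm:BS}.
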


By the same idea as above, 
we have many non $G$-conjugate $G$-extensions of a Markov--Dyck shift
from a family of non $G$-conjugate $G$-extensions of SFTs.
We note that non $G$-conjugate SFTs are studied in \cite{BoyleSchmieding} 
(cf. \cite{BoyleSullivan}).

\section{Concluding Remark}
As in \cite{DocMath2002}, 
a general $\lambda$-graph system ${\frak L}$
naturally gives rise to a $C^*$-algebra ${\mathcal{O}}_{\frak L}$.
The class of these $C^*$-algebras ${\mathcal{O}}_{\frak L}$
is a generalized class of Cuntz--Krieger algebras ${\mathcal{O}}_A$,
which are associated to finite directed graphs with transition matrix $A$.
Let ${\frak L} =(V,E,\lambda,\iota)$
be a $\lambda$-graph system over $\Sigma$
and $G$ be a finite group.
Suppose that a map $\ell: \Sigma \rightarrow G$
is given.
In this final section, we remark that
there is a relationship
between the two $C^*$-algebras
$\OL$ and $\OLG$.

Let us now briefly review the $C^*$-algebra
$\OL$ associated with $\lambda$-graph system ${\frak L}$.
We denote by 
$\{v_1^l,\dots,v_{m(l)}^l\}$ 
the vertex set $V_l$.
Define the structure matrices $A_{l,l+1}, I_{l,l+1}$
of ${\frak L}$
by setting
for
$
i=1,2,\dots,m(l),\ j=1,2,\dots,m(l+1), \ \alpha \in \Sigma,
$ 
\begin{align*}
A_{l,l+1}(i,\alpha,j)
 & =
{
\begin{cases}
1 &  
    \text{ if } \ s(e) = v_i^l, \lambda(e) = \alpha,
                       t(e) = v_j^{l+1} 
    \text{ for some }    e \in E_{l,l+1}, \\
0           & \text{ otherwise,}
\end{cases}
} \\
I_{l,l+1}(i,j)
 & =
{
\begin{cases}
1 &  
    \text{ if } \ \iota_{l,l+1}(v_j^{l+1}) = v_i^l, \\
0           & \text{ otherwise.}
\end{cases}
} 
\end{align*}
The $C^*$-algebra $\OL$
is realized as the universal unital $C^*$-algebra
generated by
partial isometries
$S_{\alpha}, \alpha \in \Sigma$
and projections
$E_i^l, i=1,2,\dots,m(l),\l\in \Zp 
$
 subject to the  following operator relations called $({\frak L})$:
\begin{align}
\sum_{\beta \in \Sigma} S_{\beta} S_{\beta}^*  & = 1,
\label{eq:SS}  \\
 \sum_{i=1}^{m(l)} E_i^l   =  1, \qquad 
 E_i^l & = \sum_{j=1}^{m(l+1)} I_{l,l+1}(i,j)E_j^{l+1},  
 \label{eq:EIE} \\
 S_\alpha S_\alpha^* E_i^l & =   E_i^{l} S_\alpha S_\alpha^*, 
 \label{eq:SSE} \\
S_\alpha^*E_i^l S_\alpha  =  
\sum_{j=1}^{m(l+1)} & A_{l,l+1}(i,\alpha,j)E_j^{l+1}, \label{eq:SES}
\end{align}
for $\alpha \in \Sigma,$
$
i=1,2,\dots,m(l),\l\in \Zp. 
$
It is nuclear (\cite[Proposition 5.6]{DocMath2002}).
Under the condition (I) defined in \cite{DocMath2002}, 
the algebra $\OL$ can be realized as 
the unique 
$C^*$-algebra subject to  the relations $({\frak L})$
(\cite[Theorem 4.3]{DocMath2002}).
If ${\frak L}$ has some irreduciblity with  condition (I),
the $C^*$-algebra $\OL$ is simple
(\cite[Theorem 4.7]{DocMath2002}, cf. \cite{MathScand2005}).


Suppose that a map
$\ell:\Sigma\rightarrow G$
is given.
Let us denote by $M_{|G|}({\mathbb{C}})$
the $|G|\times |G|$ full matrix algebra.
Let 
$\{e_{g,h}\}_{g,h \in G}$
be the system of matrix units of 
$M_{|G|}({\mathbb{C}})$.
We put $e_g , g \in G$ the diagonal matrix 
$e_{g,g}$ having $1$ only at $(g,g)$-component
and $0$ elsewhere.
We fix the canonical generators
$S_\alpha, E_i^l$ of $\OL$
satisfying the relations $({\frak L})$. 
We set
\begin{align*}
S_{(g,\alpha)} & = e_{g,g\ell(\alpha)}\otimes S_\alpha 
\quad \text{ for } (g,\alpha) \in \Sigma^G, \\
E_{(g,v_i^l)} & = e_{g}\otimes E_i^l 
\quad \text{ for } (g,v_i^l) \in V_l^G.
\end{align*}
It is easy to see that the following identities hold:
\begin{align}
\sum_{(g,\beta)\in \Sigma^G} S_{(g,\beta)}S_{(g,\beta)}^* & = 1, 
\label{eq:GSS} \\
\sum_{(g,v_i^l) \in V_l^G} E_{(g,v_i^l)}   =  1, \qquad 
E_{(g,v_i^l)} & = \sum_{j=1}^{m(l+1)} I_{l,l+1}(i,j)E_{(g,v_j^{l+1})}, 
\label{eq:GEIE} \\
 S_{(g,\alpha)} S_{(g,\alpha)}^* E_{(h,v_i^l)} 
 & =  E_{(h,v_i^l)} S_{(g,\alpha)} S_{(g,\alpha)}^*, 
 \label{eq:GSSE} \\
S_{(g,\alpha)}^*  E_{(g,v_i^l)}  S_{(g,\alpha)}  =  
\sum_{j=1}^{m(l+1)} & A_{l,l+1}(i,\alpha,j)E_{(g\ell(\alpha),v_j^{l+1})}
\label{eq:GSES}
\end{align}
for $(g,\alpha) \in \Sigma^G,(g,v_i^l)\in V_l^G, l\in \Zp$. 

Let
$A^G, I^G$ be the structure matrices of the $\lambda$-graph system
${\frak L}^{G,\ell}$.
They are related to those $A,I$
of ${\frak L}$ in the following way.
For 
$
(g,v_i^l) \in V_l^G, 
(g',v_j^{l+1}) \in V_{l+1}^G  
$
and
$(h,\alpha)\in \Sigma^G$,
we have
\begin{align*}
A_{l,l+1}^G((g,v_i^l), (h,\alpha),(g',v_j^{l+1}) )
&
= 
{
\begin{cases}
A_{l,l+1}(i,\alpha,j) & \text{ if } g=  h \text{ and } g' = g \ell(\alpha), \\
0 & \text{ otherwise, }
\end{cases}
} \\
I_{l,l+1}^G((g,v_i^l), (g',v_j^{l+1}) )
&
= 
{
\begin{cases}
I_{l,l+1}(i,j) & \text{ if } g= g', \\
0 & \text{ otherwise.}
\end{cases}
}
\end{align*}
Hence we know that the relations 
\eqref{eq:GSS},
\eqref{eq:GEIE},
\eqref{eq:GSSE}
and
\eqref{eq:GSES}
become 
the relations
\eqref{eq:SS},
\eqref{eq:EIE},
\eqref{eq:SSE}
and
\eqref{eq:SES}
 for the $\lambda$-graph system
 ${\frak L}^{G,\ell}$.
 It is easy to see that 
the extension 
${\frak L}^{G,\ell}$ 
satisfies condition (I),
if  
${\frak L}$ satisfies condition (I).
We thus conclude that
the $C^*$-algebra
$C^*(S_{(g,\alpha)}, E_{(g,v_i^l)}; \alpha \in \Sigma,\ v_i^l \in V,\ g \in G)
$  
generated by the partial isometries
$S_{(g,\alpha)}$
and the projections 
$
E_{(g,v_i^l)} 
$
is canonically isomorphic to
the $C^*$-algebra
$\OLG$.

Define an action $\gamma$ of $G$ on $\OLG$ by setting
\begin{equation*}
\gamma_h(S_{(g,\alpha)}) =S_{(hg,\alpha)}, \qquad
\gamma_h(E_{(g,v_i^l)}) = E_{(hg,v_i^l)} \quad
\text{ for } \alpha \in \Sigma,\ v_i^l \in V,\ g, h \in G.
\end{equation*}
The action $\gamma$ on $\OLG$ comes from the action of $G$ on the $G$-$\lambda$-graph system ${\frak L}^{G,\ell}$ defined in Section 4.
The following lemma will be used to prove Proposition \ref{prop:8.2}.
\begin{lemma}\label{lem:outer}
The automorphism $\gamma_h$ on $\OLG$ is outer for each $h \in G$ with $h \ne 1$.
\end{lemma}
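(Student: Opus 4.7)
I would argue by contradiction, supposing $\gamma_h = \Ad u$ for some unitary $u \in \OLG$ with $h \ne 1$. First, the projections $P_g := e_g \otimes 1 = \sum_{i=1}^{m(l)} E_{(g,v_i^l)}$ lie in $\OLG$ (the sum being independent of $l$ by relation \eqref{eq:GEIE}, since each column of $I_{l,l+1}$ has exactly one $1$), are mutually orthogonal, sum to $1$, and satisfy $\gamma_h(P_g) = P_{hg}$. The relation $u P_g u^* = P_{hg}$ then forces
\begin{equation*}
u = \sum_{g \in G} P_{hg}\, u\, P_g,
\end{equation*}
with every diagonal piece $P_g u P_g$ vanishing.

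Next, I would pass to the natural $*$-homomorphism $\pi : \OLG \to M_{|G|}(\mathbb{C}) \otimes \OL$ determined on generators by $\pi(S_{(g,\alpha)}) = e_{g, g\ell(\alpha)} \otimes S_\alpha$ and $\pi(E_{(g,v_i^l)}) = e_g \otimes E_i^l$, which is faithful under condition (I) for ${\frak L}^{G,\ell}$ by the uniqueness theorem of \cite{DocMath2002}. Inside the ambient algebra, $\gamma_h$ is realized by conjugation by the inner unitary $w_h := \bigl(\sum_g e_{hg,g}\bigr) \otimes 1_\OL$; hence $\pi(u) w_h^{-1}$ commutes with $\pi(\OLG)$. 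Simplicity of $\OLG$, combined with triviality of the relative commutant of $\pi(\OLG)$ in $M_{|G|}(\mathbb{C}) \otimes \OL$, forces $\pi(u) = c\, w_h$ for some $c \in \T$. The contradiction then reduces to showing $w_h \notin \pi(\OLG)$ for $h \ne 1$: equipping $\OL$ with the $G$-grading induced by $\ell$ (so that $S_\alpha \in \OL_{\ell(\alpha)}$ and $1_\OL \in \OL_{1_G}$), direct inspection of the generators of $\pi(\OLG)$ shows
\begin{equation*}
\pi(\OLG) \cap \bigl(e_{hg,g} \otimes \OL\bigr) \;\subseteq\; e_{hg,g} \otimes \OL_{g^{-1}hg},
\end{equation*}
so $w_h \in \pi(\OLG)$ would put $1_\OL$ into $\OL_{g^{-1}hg}$, impossible for any $g$ unless $h = 1_G$.

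The main obstacle is verifying the triviality of the relative commutant of $\pi(\OLG)$ in $M_{|G|}(\mathbb{C}) \otimes \OL$ needed for the reduction $\pi(u) = c\,w_h$; this rests on the simplicity of $\OLG$ from \cite{DocMath2002} together with a careful analysis of which elements of the ambient algebra can commute with all the image generators $e_{g,g\ell(\alpha)} \otimes S_\alpha$ and $e_g \otimes E_i^l$. Once this reduction is in hand, the closing $G$-grading comparison is a purely algebraic bookkeeping step with the generators.
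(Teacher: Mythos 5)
Your overall strategy is different from the paper's, and it founders on exactly the step you flag as ``the main obstacle.'' The assertion that the relative commutant $\pi(\OLG)'\cap\bigl(M_{|G|}(\C)\otimes\OL\bigr)$ is trivial is the entire content of the lemma in your formulation, and the justification you offer --- simplicity of $\OLG$ --- does not give it: a simple subalgebra can perfectly well have a large relative commutant (e.g.\ $\OL\otimes 1\subset \OL\otimes M_2$). Concretely, commuting with the projections $e_g\otimes E_i^l$ only forces an element into block--diagonal form $\sum_g e_g\otimes x_g$ with each $x_g$ in the relative commutant of the abelian algebra generated by the $E_i^l$, and the remaining conditions $x_gS_\alpha=S_\alpha x_{g\ell(\alpha)}$ pin the $x_g$ down to a common scalar only after an argument of the same depth as proving that the canonical masa/core of $\OL$ has trivial relative commutant --- which needs condition (I) and irreducibility and is not supplied. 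Note also that the lemma as stated does not assume ${\frak L}$ irreducible, so simplicity of $\OLG$ is not even available at this point in the paper. Your opening block decomposition $u=\sum_g P_{hg}uP_g$ and the closing grading argument (modulo the harmless slip that the $(hg,g)$ block lands in $\OL_{g^{-1}h^{-1}g}$ rather than $\OL_{g^{-1}hg}$) are fine, but they are the easy bookkeeping; the proof is incomplete without the commutant computation.

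For comparison, the paper avoids the relative commutant entirely: it observes that a unitary $U$ implementing $\gamma_h$ must normalize the canonical masa $\DLG$ and preserve $\FLG$, invokes the structure theorem for such normalizers from \cite{MaYMJ2010} to write $U$ as a finite sum $\sum c_{\xi,(g,v_i^{l_0}),\eta}S_\xi E_{(g,v_i^{l_0})}S_\eta^*$ with $|\xi|=|\eta|=N_0$, and then uses the intertwining relations $US_{(g,\alpha)}=S_{(hg,\alpha)}U$ along paths of length $N_0$ to force $U$ into the commutative subalgebra generated by the vertex projections, contradicting $UE_{(g,v_i^l)}U^*=E_{(hg,v_i^l)}$. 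If you want to salvage your route, you would need to actually carry out the commutant computation (essentially a conditional-expectation/gauge-averaging argument on $M_{|G|}(\C)\otimes\OL$), at which point the two proofs are of comparable length.
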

\begin{proof}
Fix $h \in G$ with $h \ne 1$.
Suppose that $\gamma_h$ is inner so that there exists a unitary 
$U$ in $\OLG$ such that $\gamma_h = \Ad(U)$.
For an admissible word 
$\mu =(\mu_1,\dots,\mu_n) \in B_*(\Lambda_{{\frak L}^{G,\ell}})$
where $\mu_i = (g_i,\alpha_i)$ for some $g_i \in G, \alpha_i \in \Sigma$
of the presented subshift 
$\Lambda_{{\frak L}^{G,\ell}}$,
denote by  
$S_\mu$ 
the partial isometry 
$S_{(g_1,\alpha_1)}\cdots S_{(g_n,\alpha_n)}$ in $\OLG$.
We define the $C^*$-subalgebras $\FLG, \DLG$ of $\OLG$
by setting
\begin{align*}
\FLG &= C^*(S_\mu E_{(g,v_i^l)}S_\nu^* ; (g,v_i^l) \in V^G,
                   \mu, \nu \in B_*(\Lambda_{{\frak L}^{G,\ell}}) \text{ with } |\mu| = |\nu|), \\
\DLG &= C^*(S_\mu E_{(g,v_i^l)}S_\mu^* ; (g,v_i^l) \in V^G,
                   \mu \in B_*(\Lambda_{{\frak L}^{G,\ell}}) ). 
\end{align*}
They are the $C^*$-subalgebras of $\OLG$ generated by 
elements of the form 
$
S_\mu E_{(g,v_i^l)}S_\nu^*
$ 
for 
$(g,v_i^l) \in V^G, \mu, \nu \in B_*(\Lambda_{{\frak L}^{G,\ell}}) 
$
with
$ |\mu| = |\nu|,$
and
$S_\mu E_{(g,v_i^l)}S_\mu^*
$
for
$(g,v_i^l) \in V^G,\mu \in B_*(\Lambda_{{\frak L}^{G,\ell}}),$
respectively.
The latter subalgebra $\DLG$ is a maximal abelian $C^*$-subalgebra of the AF-algebra
$\FLG$.
Since $\gamma_h(\DLG) = \DLG$,
the unitary $U$ gives rise to an element of the normalizer 
$N(\OLG,\DLG)$.
By \cite{MaYMJ2010} with the condition $\gamma_h(\FLG) = \FLG$, 
there exist $N_0, l_0 \in \N$ such that 
\begin{equation}
U = \sum_{\xi,\eta \in B_{N_0}(\Lambda_{{\frak L}^{G,\ell}}), \, (g,v_i^{l_0})\in V_{l_0}^G }
c_{\xi,(g,v_i^{l_0}),\eta}S_\xi E_{(g,v_i^l)} S_{\eta}^*  \label{eq:U}
\end{equation}
where 
$c_{\xi,(g,v_i^{l_0}),\eta} \in \C$.
Since
$US_{(g,\alpha)}U^* = S_{(hg,\alpha)}$,
we have
\begin{equation}
S_{(hg,\alpha)}^* US_{(g,\alpha)} =S_{(hg,\alpha)}^*S_{(hg,\alpha)}U
\quad \text{ for } (g,\alpha) \in \Sigma^G. \label{eq:SU}
\end{equation}
Take $l_1 >N_0$.
For a fixed $(g,v_i^{l_1}) \in V^G,$
there exist 
$(g_1,\alpha_1), \dots, (g_{N_0},\alpha_{N_0}) \in \Sigma^G$
such that 
\begin{equation}
E_{(g,v_i^{l_1})} 
\le 
S_{(h g_{N_0},\alpha_{N_0})}^*\cdots S_{(h g_1,\alpha_1)}^*
S_{(h g_1,\alpha_1)}\cdots S_{(h g_{N_0},\alpha_{N_0})}. \label{eq:8.11}
\end{equation}
We may in fact find a sequence 
$f_1, f_2, \dots, f_{N_0}$ 
of labeled edges in $\frak L$ such that 
$t(f_i) = s(f_{i+1})$ for $i=1,2,\dots, N_0-1$
and $t(f_{N_0}) = v_i^l$.
Put
\begin{gather*}
g_{N_0} = h^{-1} g \ell_G(f_{N_0})^{-1}, \qquad 
g_{N_0-1} = h^{-1} g \ell_G(f_{N_0})^{-1} \ell_G(f_{N_0}-1)^{-1}, \\ \quad \dots, \quad
g_{1} = h^{-1} g \ell_G(f_{N_0})^{-1} \ell_G(f_{N_0}-1)^{-1}\cdots \ell_G(f_{1})^{-1}, 
\end{gather*}
and
$\alpha_i = \ell_G(f_i)$ for $i=1,\dots, N_0,$
which satisfy the inequality \eqref{eq:8.11}.
By \eqref{eq:SU}, we have
\begin{align*}
& S_{(h g_{N_0},\alpha_{N_0})}^*\cdots S_{(h g_1,\alpha_1)}^* U
  S_{(g_1,\alpha_1)}\cdots S_{(g_{N_0},\alpha_{N_0})} \\
=&
S_{(h g_{N_0},\alpha_{N_0})}^*\cdots S_{(h g_1,\alpha_1)}^* 
S_{(h g_1,\alpha_1)}\cdots S_{(h g_{N_0},\alpha_{N_0})} U.
\end{align*}
By \eqref{eq:U},
the element 
$S_{(h g_{N_0},\alpha_{N_0})}^*\cdots S_{(h g_1,\alpha_1)}^* U
  S_{(g_1,\alpha_1)}\cdots S_{(g_{N_0},\alpha_{N_0})}$    
belongs to
the $C^*$-subalgebra 
${{\mathcal{A}}_{{\frak L}^{G,\ell}}}
$
generated by the projections
$E_{(g, v_i^l)}, (g, v_i^l)\in V^G.$ 
Since
\begin{align*}
 E_{(g,v_i^{l_1})} U 
& =  E_{(g,v_i^{l_1})} S_{(h g_{N_0},\alpha_{N_0})}^*\cdots S_{(h g_1,\alpha_1)}^*
                         S_{(h g_1,\alpha_1)}\cdots S_{(h g_{N_0},\alpha_{N_0})} U \\
& =  E_{(g,v_i^{l_1})}S_{(h g_{N_0},\alpha_{N_0})}^*\cdots S_{(h g_1,\alpha_1)}^* U
                        S_{(g_1,\alpha_1)}\cdots S_{(g_{N_0},\alpha_{N_0})}
\end{align*}
which belongs to ${{\mathcal{A}}_{{\frak L}^{G,\ell}}}$
and
$U  = \sum_{(g,v_i^{l_1}) \in V_{l_1}^G} E_{(g,v_i^{l_1})} U,$ 
the unitary $U$ belongs to the commutative $C^*$-subalgebra 
${{\mathcal{A}}_{{\frak L}^{G,\ell}}},
$
so that 
$ U E_{(g,\alpha)}U^* = E_{(g,\alpha)}$.
This is a contradiction, 
because
$
E_{(hg,\alpha)} = \gamma_h(E_{(g,\alpha)}) = U E_{(g,\alpha)}U^*.
$
\end{proof}
By using the above lemma, we can show the following proposition.
\begin{proposition}\label{prop:8.2}
Suppose that a $\lambda$-graph system ${\frak L}$ is irreducible and satisfies condition (I)
 in the sense of \cite{DocMath2002}.
Then the crossed product $C^*$-algebra
$\OLG\rtimes_{\gamma}G$ is isomorphic to 
the tensor product 
$M_{|G|}({\mathbb{C}}) \otimes \OL$.
\end{proposition}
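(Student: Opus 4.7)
My plan is to construct the candidate isomorphism $\Phi$ explicitly from the covariant pair already implicit in the paper, and then establish injectivity via the $G$-grading of $\OL$ induced by the labelling $\ell$. The identities established in the text immediately preceding the proposition show that the assignments
\[
S_{(g,\alpha)}\longmapsto e_{g,g\ell(\alpha)}\otimes S_\alpha,
\qquad
E_{(g,v_i^l)}\longmapsto e_g\otimes E_i^l
\]
extend to a $*$-homomorphism $\pi:\OLG\to M_{|G|}(\mathbb{C})\otimes\OL$. Because ${\frak L}^{G,\ell}$ satisfies condition~(I) whenever ${\frak L}$ does, the uniqueness part of \cite[Theorem 4.3]{DocMath2002} applied to ${\frak L}^{G,\ell}$ forces $\pi$ to be injective. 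Setting $u_h=\lambda_h\otimes 1$, where $\lambda$ is the left regular representation of $G$ on $\ell^2(G)$, a direct matrix computation gives the covariance relation $u_h\pi(a)u_h^{*}=\pi(\gamma_h(a))$ for $a\in\OLG$. The universal property of the crossed product then yields a $*$-homomorphism
\[
\Phi:\OLG\rtimes_\gamma G\to M_{|G|}(\mathbb{C})\otimes\OL,
\qquad\Phi|_{\OLG}=\pi,\quad \Phi(U_h)=\lambda_h\otimes 1.
\]

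For surjectivity, the image of $\Phi$ contains $e_g\otimes 1=\sum_i\pi(E_{(g,v_i^0)})$ and $\lambda_h\otimes 1=\Phi(U_h)$, and the product $(\lambda_{g_1g_2^{-1}}\otimes 1)(e_{g_2}\otimes 1)=e_{g_1,g_2}\otimes 1$ recovers every matrix unit. Multiplying these against $\pi(S_{(g,\alpha)})$ and $\pi(E_{(g,v_i^l)})$ produces $1\otimes S_\alpha$ and $1\otimes E_i^l$, so the image generates all of $M_{|G|}(\mathbb{C})\otimes\OL$.

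The heart of the proof is injectivity. I would exploit the canonical coaction $\delta:\OL\to\OL\otimes C^{*}(G)$ determined on generators by $\delta(S_\alpha)=S_\alpha\otimes u_{\ell(\alpha)}$ and $\delta(E_i^l)=E_i^l\otimes 1$, which is well-defined by the universal property of $\OL$. The spectral subspaces $\OL_h=\{a\in\OL:\delta(a)=a\otimes u_h\}$ are pairwise linearly independent, since $\{u_h\}_{h\in G}$ is a linearly independent family in $C^{*}(G)$. A check on generators shows that $\pi(\OLG)$ coincides with the closed span of $\{e_{g,g'}\otimes a:g,g'\in G,\ a\in\OL_{g^{-1}g'}\}$. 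Since $G$ is finite, every $x\in\OLG\rtimes_\gamma G$ is uniquely written $x=\sum_{h\in G}a_hU_h$ with $a_h\in\OLG$; writing $\pi(a_h)=\sum_{g,g'}e_{g,g'}\otimes a^h_{g,g'}$ with $a^h_{g,g'}\in\OL_{g^{-1}g'}$ and using $e_{g,g'}\lambda_h=e_{g,h^{-1}g'}$, the $(g,k)$-entry of $\Phi(x)$ equals $\sum_h a^h_{g,hk}$, whose $h$-th summand lies in $\OL_{g^{-1}hk}$. For fixed $(g,k)$, as $h$ ranges over $G$ the indices $g^{-1}hk$ traverse $G$ bijectively; the summands therefore lie in pairwise distinct spectral subspaces, and vanishing of the entry forces each term to vanish. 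Hence every $a^h_{g,g'}$ vanishes, $a_h=0$ for all $h$, and $x=0$.

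The main obstacle is the identification of $\pi(\OLG)$ with the $G$-graded subalgebra of $M_{|G|}(\mathbb{C})\otimes\OL$ together with the linear independence of the spectral subspaces $\OL_h$; both rely on realizing the coaction $\delta$ as a genuine $*$-homomorphism and on knowing that $\OL$ is the closed linear span of the graded pieces---a standard consequence of the universal description of $\OL$ in \cite{DocMath2002}. An alternative route uses Lemma~\ref{lem:outer}: once one knows $\OLG$ is $G$-simple (which follows from the simplicity of $\OL$ under the hypotheses, via the conditional expectation onto the fixed-point algebra), outerness of $\gamma$ together with Kishimoto's theorem give simplicity of $\OLG\rtimes_\gamma G$, from which injectivity of the nonzero homomorphism $\Phi$ is immediate.
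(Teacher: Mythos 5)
Your proposal is correct, but the key step---injectivity of $\Phi$---is established by a genuinely different mechanism than the one in the paper. The paper takes the same covariant pair $(\pi,\,u_h=U_h\otimes 1)$ and the same appeal to universality of the crossed product, but then deduces injectivity from \emph{simplicity} of $\OLG\rtimes_\gamma G$: it invokes the simplicity of $\OL$ (irreducibility plus condition (I)), the outerness of each $\gamma_h$, $h\ne 1$ (Lemma \ref{lem:outer}, whose entire purpose in the paper is this step), and Kishimoto's theorem; the final identification of $C^*(\OLG,u_h;h\in G)$ with $M_{|G|}(\C)\otimes\OL$ is left as ``easy to verify.'' You instead prove injectivity directly via the $G$-grading of $\OL$ coming from the coaction $\delta(S_\alpha)=S_\alpha\otimes u_{\ell(\alpha)}$, $\delta(E_i^l)=E_i^l\otimes 1$ (which is indeed well defined by the universal presentation of $\OL$ subject to the relations $({\frak L})$), together with the observation that $\pi(\OLG)$ sits inside the closed graded subalgebra $\{\sum_{g,g'}e_{g,g'}\otimes a_{g,g'}:a_{g,g'}\in\OL_{g^{-1}g'}\}$; only this forward inclusion is needed, and it follows from the generators and closedness of the spectral subspaces. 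Your bookkeeping ($e_{g,g'}\lambda_h=e_{g,h^{-1}g'}$, the entries $\sum_h a^h_{g,hk}$ landing in pairwise distinct spectral subspaces) checks out, as does the reduction of injectivity of $\pi$ itself to condition (I) for ${\frak L}^{G,\ell}$, which the paper establishes just before the proposition. What each approach buys: the paper's route is shorter on the page but leans on Lemma \ref{lem:outer} and on simplicity hypotheses (note that Kishimoto's theorem is applied to the algebra $\OLG$, whose simplicity is the relevant input there), whereas your grading argument bypasses outerness and simplicity entirely and would prove the isomorphism under condition (I) alone, without irreducibility of ${\frak L}$; your explicit surjectivity computation (recovering all matrix units from $e_g\otimes 1$ and $\lambda_h\otimes 1$, then peeling off $1\otimes S_\alpha$ and $1\otimes E_i^l$) also fills in the step the paper leaves to the reader. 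Your closing ``alternative route'' is essentially the paper's argument.
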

\begin{proof}
Let $\{ \delta_h \mid h \in G\}$ be the standard basis of $\C^{|G|}$.
Define the unitary $U_h$ on $\C^{|G|}$ for each $h \in G$ by 
$U_h\delta_k = \delta_{hk}, k \in G$.
We set 
$u_h = U_h \otimes 1$ in 
$M_{|G|}({\mathbb{C}}) \otimes \OL$.
Then it is easy to see that 
\begin{equation*}
\gamma_h(S_{(g,\alpha)}) = \Ad(u_h)(S_{(g,\alpha)}), \qquad 
\gamma_h(E_{(g,v_i^l)}) = \Ad(u_h)(E_{(g,v_i^l)}) 
\end{equation*}
for
$h,g \in G, \, \alpha \in \Sigma, \, v_i^l \in V. 
$
By the universality of the crossed product
$\OLG\rtimes_{\gamma}G$,
there exists a natural $*$-homomorphism $\varphi$
from 
$\OLG\rtimes_{\gamma}G$  onto the $C^*$-algebra
$C^*(\OL, u_h; h \in G)$
generated by $\OL$ and the unitaries $u_h, h \in G$.
Since ${\frak L}$ is irreducible and satisfies condition (I),
the $C^*$-algebra $\OL$ is simple by \cite{DocMath2002}. 
As 
automorphisms $\gamma_h$ are outer for $h\ne 1$ by Lemma \ref{lem:outer},
the crossed product
$\OLG\rtimes_{\gamma}G$ is simple by \cite{Kishi}.
Therefore the homomorphism  
$\varphi: \OLG\rtimes_{\gamma}G\longrightarrow C^*(\OL, u_h; h \in G)$
is actually isomorphic.
It is then easy to verify that
the latter $C^*$-algebra $C^*(\OL, u_h; h \in G)$
 is isomorphic to 
the tensor product $C^*$-algebra
$M_{|G|}({\mathbb{C}}) \otimes \OL$.
\end{proof}

Let $A$ be an $N\times N$ irreducible non permutation matrix with entries in nonegative integers.
Let $\G_A=(V_A, E_A)$ be the associated directed graph
with 
vertex set $V_A =\{v_1,\dots,v_N\}$ 
and
edge set $E_A =\{e_1,\dots,e_{N_A}\}$.
Suppose that a map $\ell: E_A \longrightarrow G$ 
is given.
We define a directed graph
$\G_{A^\ell}$
 giving rise to the extension of the SFT
$\Lambda_A$ by the map $\ell$ by following \cite[Sectoin 2]{BoyleSullivan}.
We set 
$V_A^{G,\ell} =\{(g, v_i) \mid g \in G, i=1,\dots,N \}$.
For each $e \in E_A$ from $v_i$ to $v_j$ in $\G_A$
and $g \in G$,
draw an edge from $(g,v_i)$ to $(g\ell(e), v_j)$.
We thus have a directed graph
with vertex set 
$V_A^{G,\ell}$.
It is written $\G_{A^\ell}$,
which gives rise to a $G$-SFT
such that 
the associated $\lambda$-graph system to the  directed graph $\G_{A^\ell}$ is   
the $G$-extension of the $\lambda$-graph system of the original directed graph  
$\G_A$.
The nonnegative matrix for the directed graph $\G_{A^\ell}$ 
is denoted by $A^{G,\ell}$.
Now let $A^\G$ be the $N_A \times N_A$ matrix defined by 
\eqref{eq:AG},
and
$S_1,\dots, S_{N_A}$  a family of partial isometries satisfying the relations
\eqref{eq:CKAG}.
The partial isometries generate the Cuntz--Krieger algebra 
${\mathcal{O}}_{A^\G}$ which is written $\OA$.
Let us denote by
$\{e_{g,h}\}_{g,h\in G}$ the system of matrix units of $M_{|G|}(\C)$.
The Cuntz--Krieger algebra
 ${\mathcal{O}}_{A^{\G,\ell}}$
is generated by the family of the following partial isometries 
\begin{equation*}
S_{(g,i)} = e_{(g,g\ell(e_i))} \otimes S_i, \qquad
g \in G, \, e_i \in E_A, \, i=1,\dots,N_A.
\end{equation*}
We define an action $\gamma$ of $G$ on 
${\mathcal{O}}_{A^{\G,\ell}}$
by
$\gamma_h(S_{(g,i)} ) = S_{(hg,i)},\,  h,g \in G, i=1,\dots,N_A$.
As a corollary of Proposition \ref{prop:8.2}, we have 
\begin{corollary}
The crossed product $C^*$-algebra 
${\mathcal{O}}_{A^{\G,\ell}}\rtimes_{\gamma}G$ is isomorphic to 
the tensor product
$M_{|G|}(\C)\otimes \OA$.
\end{corollary}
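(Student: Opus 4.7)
The plan is to recognize the Cuntz--Krieger algebra setup as a special case of the $\lambda$-graph system framework and then invoke Proposition \ref{prop:8.2}. As noted in Section 2.2, any finite directed labeled graph $\mathcal{G}$ gives rise to a $\lambda$-graph system ${\frak L}_{\mathcal{G}}$ with $V_l = \mathcal{V}$, $E_{l,l+1} = \mathcal{E}$ and $\iota = \id$ for all $l$. Applied to $\G_A$ (viewed as trivially labeled by its edges), this produces a $\lambda$-graph system ${\frak L} = {\frak L}_{\G_A}$; the operator relations \eqref{eq:SS}--\eqref{eq:SES} reduce in this case to the Cuntz--Krieger relations \eqref{eq:CKAG} for $A^\G$, so $\OL$ is canonically isomorphic to $\OA$.

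Next I would apply the $G$-extension construction of Section 4 to ${\frak L}$ with the map $\ell:\Sigma \to G$ (where $\Sigma = E_A$). By the remark after Theorem \ref{thm:extlambda}, the $G$-extension of ${\frak L}_{\G_A}$ coincides with the $\lambda$-graph system of the $G$-extension graph $\G_{A^\ell}$. Matching generators, the partial isometries $S_{(g,i)} = e_{g,g\ell(e_i)} \otimes S_i$ and the vertex projections $E_{(g,v_j)} = e_g \otimes E_j$ verify the relations \eqref{eq:GSS}--\eqref{eq:GSES} for ${\frak L}^{G,\ell}$, and since ${\frak L}^{G,\ell}$ again comes from a finite directed graph, its universal $C^*$-algebra $\OLG$ is precisely the Cuntz--Krieger algebra ${\mathcal{O}}_{A^{\G,\ell}}$. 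Moreover the action $\gamma$ of $G$ on $\OLG$ defined just before Lemma \ref{lem:outer} coincides under this identification with the action $\gamma_h(S_{(g,i)}) = S_{(hg,i)}$ of the corollary.

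The remaining step is to verify the hypotheses of Proposition \ref{prop:8.2}, namely that ${\frak L} = {\frak L}_{\G_A}$ is irreducible and satisfies condition (I). Irreducibility is immediate from the hypothesis that $A$ is irreducible, since strong connectivity of $\G_A$ transfers directly to the constant-level $\lambda$-graph system. Condition (I) for ${\frak L}_{\G_A}$ reduces in the graph case to the Cuntz--Krieger condition (I) for $A^\G$, which is guaranteed by the assumption that $A$ is not a permutation matrix (together with irreducibility): the graph $\G_A$ then contains two distinct cycles through some vertex, from which one produces the distinguishing paths required by condition (I).

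With these identifications in place, Proposition \ref{prop:8.2} immediately yields
\begin{equation*}
{\mathcal{O}}_{A^{\G,\ell}} \rtimes_\gamma G \;\cong\; \OLG \rtimes_\gamma G \;\cong\; M_{|G|}(\C) \otimes \OL \;\cong\; M_{|G|}(\C) \otimes \OA,
\end{equation*}
which is the desired conclusion. The only non-routine check is the reduction of condition (I) for ${\frak L}_{\G_A}$ to the classical Cuntz--Krieger condition (I) for $A^\G$; everything else is a bookkeeping translation between the graph picture and the $\lambda$-graph system picture.
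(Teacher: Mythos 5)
Your proposal is correct and follows exactly the route the paper intends: the paper derives this corollary by specializing Proposition \ref{prop:8.2} to the $\lambda$-graph system ${\frak L}_{\G_A}$ of the finite directed graph, identifying $\OL$ with $\OA={\mathcal{O}}_{A^\G}$ and $\OLG$ with ${\mathcal{O}}_{A^{\G,\ell}}$, which is precisely what you do. Your added verifications (that the $G$-extension of ${\frak L}_{\G_A}$ is the $\lambda$-graph system of $\G_{A^\ell}$, and that irreducibility and condition (I) transfer from the non-permutation irreducible matrix $A$) are the details the paper leaves implicit, and they are handled correctly.
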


\bigskip

{\it Acknowledgment.}
The author would like to thank the referee for his helpful advices and questions on the presentation of the paper. Thanks to his question, the second half of Section 7 appeared. This work was supported by JSPS KAKENHI Grant Number 15K04896.

\end{document}